\journal{Computer Physics Communications}
\providecommand{\doi}[1]{%
	\begingroup
	\let\bibinfo\@secondoftwo
	\urlstyle{rm}%
	\href{http://dx.doi.org/#1}{%
		doi:\discretionary{}{}{}%
		\nolinkurl{#1}%
	}%
	\endgroup
}
\definecolor{bleuUCLfonce}{rgb}{ .13, .52, .86}
\renewcommand{\vec}[1]{\mathbf{#1}}
\let\storeBeta=\beta
\renewcommand\beta{\relax\ifmmode{\storeBeta}\else{$\storeBeta$}\fi\xspace}
\let\storeAlpha=\alpha
\renewcommand\alpha{\relax\ifmmode{\storeAlpha}\else{$\storeAlpha$}\fi\xspace}
\newcommand{\abs}[1]{\left| #1 \right|} % for absolute value
\renewcommand{\d}[2]{\frac{\mathrm{d} #1}{\mathrm{d} #2}} % for derivatives
\newcommand{\pd}[2]{\frac{\partial #1}{\partial #2}} 
\let\baraccent=\= % rename builtin command \= to \baraccent
\renewcommand{\=}[1]{\stackrel{#1}{=}} % for putting numbers above =
\newcommand*\colvec[1]{
	\global\colveccount#1
	\begin{pmatrix}
		\colvecnext
	}
	\def\colvecnext#1{
		#1
		\global\advance\colveccount-1
		\ifnum\colveccount>0
		\\
		\expandafter\colvecnext
		\else
	\end{pmatrix}
	\fi
}
\newcommand{\norm}[1]{\left\lVert#1\right\rVert}
\def\tphi{\widetilde{\phi}}
\def\tmu{\widetilde{\mu}}
\def\trhok{\widetilde{\rho}^{\, k}}
\def\tetak{\widetilde{\eta}^{\, k}}
\def\tp{\widetilde{p}}
\def\tvi{\widetilde{v}_i}
\def\tvj{\widetilde{v}_j}
\def\tJi{\widetilde{J}_i}
\def\tJj{\widetilde{J}_j}
\def\tpsi{\widetilde{\psi}}
\def\tvecv{\widetilde{\vec{v}}}
\newtheorem{remark}{Remark}
\pgfplotsset{
	table/search path={Figures},
}
\pgfplotsset{compat=1.3}
\pgfplotsset{
	discard if/.style 2 args={
		x filter/.code={
			\edef\tempa{\thisrow{#1}}
			\edef\tempb{#2}
			\ifx\tempa\tempb
			
			\fi
		}
	},
	discard if not/.style 2 args={
		x filter/.code={
			\edef\tempa{\thisrow{#1}}
			\edef\tempb{#2}
			\ifx\tempa\tempb
			\else
			
			\fi
		}
	}
}
\newcounter{bgunsure}
\newcounter{bgchange}
\newcounter{bginfo}
\newcounter{bgimprovement}
\newcounter{bgthiswillnotshow}
\newcommandx{\bgunsure}[2][1=]{\refstepcounter{bgunsure}\todo[linecolor=red,backgroundcolor=red!25,bordercolor=red,#1]{[BG\thebgunsure:] #2}}
\newcommandx{\bgchange}[2][1=]{\refstepcounter{bgchange}\todo[linecolor=blue,backgroundcolor=blue!25,bordercolor=blue,#1]{[BG\thebgchange:] #2}}
\newcommandx{\bginfo}[2][1=]{\refstepcounter{bgimprovement}\todo[linecolor=OliveGreen,backgroundcolor=OliveGreen!25,bordercolor=OliveGreen,#1]{[BG\thebgimprovement:] #2}}
\newcommandx{\bgimprovement}[2][1=]{\refstepcounter{bginfo}\todo[linecolor=Plum,backgroundcolor=Plum!25,bordercolor=Plum,#1]{[BG\thebginfo:] #2}}
\newcommandx{\bgthiswillnotshow}[2][1=]{\refstepcounter{bgthiswillnotshow}\todo[disable,#1]{[BG\thebgthiswillnotshow:] #2}}
\newcommand{\Stampede}{\href{https://www.tacc.utexas.edu/systems/stampede2}{Stampede2}}
\newcounter{mkunsure}
\newcounter{mkchange}
\newcounter{mkinfo}
\newcounter{mkimprovement}
\newcounter{mkthiswillnotshow}
\newcommandx{\mkunsure}[2][1=]{\refstepcounter{mkunsure}\todo[linecolor=red,backgroundcolor=red!25,bordercolor=red,#1]{[MAK\themkunsure:] #2}}
\newcommandx{\mkchange}[2][1=]{\refstepcounter{mkchange}\todo[linecolor=blue,backgroundcolor=blue!25,bordercolor=blue,#1]{[MAK\themkchange:] #2}}
\newcommandx{\mkinfo}[2][1=]{\refstepcounter{mkimprovement}\todo[linecolor=OliveGreen,backgroundcolor=OliveGreen!25,bordercolor=OliveGreen,#1]{[MAK\themkimprovement:] #2}}
\newcommandx{\mkimprovement}[2][1=]{\refstepcounter{mkinfo}\todo[linecolor=Plum,backgroundcolor=Plum!25,bordercolor=Plum,#1]{[MAK\themkinfo:] #2}}
\newcommandx{\mkthiswillnotshow}[2][1=]{\refstepcounter{mkthiswillnotshow}\todo[disable,#1]{[MAK\themkthiswillnotshow:] #2}}
\newcounter{vcunsure}
\newcounter{vcchange}
\newcounter{vcinfo}
\newcounter{vcimprovement}
\newcounter{vcthiswillnotshow}
\newcommandx{\vcunsure}[2][1=]{\refstepcounter{vcunsure}\todo[linecolor=red,backgroundcolor=red!25,bordercolor=red,#1]{[VC\themkunsure:] #2}}
\newcommandx{\vcchange}[2][1=]{\refstepcounter{vcchange}\todo[linecolor=blue,backgroundcolor=blue!25,bordercolor=blue,#1]{[VC\themkchange:] #2}}
\newcommandx{\vcinfo}[2][1=]{\refstepcounter{vcimprovement}\todo[linecolor=OliveGreen,backgroundcolor=OliveGreen!25,bordercolor=OliveGreen,#1]{[VC\themkimprovement:] #2}}
\newcommandx{\vcimprovement}[2][1=]{\refstepcounter{vcinfo}\todo[linecolor=Plum,backgroundcolor=Plum!25,bordercolor=Plum,#1]{[VC\themkinfo:] #2}}
\newcommandx{\vcthiswillnotshow}[2][1=]{\refstepcounter{vcthiswillnotshow}\todo[disable,#1]{[VC\themkthiswillnotshow:] #2}}
\newcounter{mfunsure}
\newcounter{mfimprovement}
\newcounter{mfthiswillnotshow}
\newcommandx{\mfunsure}[2][1=]{\refstepcounter{mfunsure}\todo[linecolor=red,backgroundcolor=red!25,bordercolor=red,#1]{[MF\themkunsure:] #2}}
\newcommandx{\mfchange}[2][1=]{\refstepcounter{mfchange}\todo[linecolor=blue,backgroundcolor=blue!25,bordercolor=blue,#1]{[MF\themkchange:] #2}}
\newcommandx{\mfinfo}[2][1=]{\refstepcounter{mfimprovement}\todo[linecolor=OliveGreen,backgroundcolor=OliveGreen!25,bordercolor=OliveGreen,#1]{[MF\themkimprovement:] #2}}
\newcommandx{\mfimprovement}[2][1=]{\refstepcounter{mfinfo}\todo[linecolor=Plum,backgroundcolor=Plum!25,bordercolor=Plum,#1]{[MF\themkinfo:] #2}}
\newcommandx{\mfthiswillnotshow}[2][1=]{\refstepcounter{mfthiswillnotshow}\todo[disable,#1]{[MF\themkthiswillnotshow:] #2}}
\newcommand{\dendro}{\textsc{Dendro}}
\newcommand{\petsc}{\textsc{Petsc}}
\newcommand{\dendroFour}{\textsc{Dendro4}}
\newcommand{\dendroFive}{\textsc{Dendro5}}
\newcommand{\oTo}{\textsc{o2o}}
\newcommand{\oTn}{\textsc{o2n}}
\NewDocumentCommand{\codeword}{v}{%
	\texttt{\textcolor{blue}{#1}}%
}
\definecolor{codegreen}{rgb}{0,0.6,0}
\definecolor{codegray}{rgb}{0.5,0.5,0.5}
\definecolor{codepurple}{rgb}{0.58,0,0.82}
\definecolor{backcolour}{rgb}{0.95,0.95,0.92}
\lstdefinestyle{mystyle}{
	backgroundcolor=\color{backcolour},   
	commentstyle=\color{codegreen},
	keywordstyle=\color{magenta},
	stringstyle=\color{codepurple},
	basicstyle=\ttfamily\footnotesize,
	breakatwhitespace=false,         
	breaklines=true,                 
	captionpos=b,                    
	keepspaces=true,                  
	showspaces=false,                
	showstringspaces=false,
	showtabs=false,                  
	tabsize=2
}
\definecolor{sq_b1}{RGB}{37,52,148}
\definecolor{sq_b2}{RGB}{44,127,184}
\definecolor{sq_b3}{RGB}{65,182,196}
\definecolor{sq_b4}{RGB}{127,205,187}
\definecolor{sq_b5}{RGB}{199,233,180}
\definecolor{sq_b6}{RGB}{255,255,204}
\definecolor{sq_r1}{RGB}{189,0,38}
\definecolor{sq_r2}{RGB}{240,59,32}
\definecolor{sq_r3}{RGB}{253,141,60}
\definecolor{sq_r4}{RGB}{254,178,76}
\definecolor{sq_r5}{RGB}{254,217,118}
\definecolor{sq_r6}{RGB}{255,255,178}
\definecolor{sq_g1}{RGB}{0,104,55}
\definecolor{sq_g2}{RGB}{49,163,84}
\definecolor{sq_g3}{RGB}{120,198,121}
\definecolor{sq_g4}{RGB}{173,221,142}
\definecolor{sq_g5}{RGB}{217,240,163}
\definecolor{sq_g6}{RGB}{255,255,204}
\definecolor{div_c1}{RGB}{230,171,2}
\definecolor{div_c2}{RGB}{102,166,30}
\definecolor{div_c3}{RGB}{231,41,138}
\definecolor{div_c4}{RGB}{117,112,179}
\definecolor{div_c5}{RGB}{217,95,2}
\definecolor{div_c6}{RGB}{27,158,119}
\definecolor{div_c7}{RGB}{215,48,39}
\definecolor{div_d1}{RGB}{215,25,28}
\definecolor{div_d2}{RGB}{253,174,97}
\definecolor{div_d3}{RGB}{255,255,191}
\definecolor{div_d4}{RGB}{171,217,233}
\definecolor{div_d5}{RGB}{44,123,182}
\begin{document}

\begin{frontmatter}

\title{A fully-coupled framework for solving Cahn-Hilliard Navier-Stokes equations: Second-order, 
energy-stable numerical methods on adaptive octree based meshes}

%\tnotetext[mytitlenote]{text}

%% or include affiliations in footnotes:
\author[isuMechEAddress,isuMathAddress]{Makrand A. Khanwale\fnref{MAKFootnote}}
\ead{khanwale@iastate.edu}

\author[isuMechEAddress]{Kumar Saurabh}%\fnref{AlecFootnote}}
\ead{maksbh@iastate.edu}

\author[utahAddress]{Milinda Fernando\fnref{MFFootnote}}
\ead{milinda@cs.utah.edu}

\author[curtinAddress,minResAddress,cicAddress]{Victor M. Calo}
\ead{Victor.Calo@Curtin.edu.au}

\author[utahAddress]{Hari Sundar}
\ead{hari@cs.utah.edu}

\author[isuMathAddress]{James A. Rossmanith}
\ead{rossmani@iastate.edu}

\author[isuMechEAddress]{Baskar~Ganapathysubramanian\corref{correspondingAuthor}}
\ead{baskarg@iastate.edu}

\cortext[correspondingAuthor]{Corresponding author}
\fntext[MAKFootnote]{Presently at Center for Turbulence Research, Stanford University, CA, USA}
\fntext[MFFootnote]{Presently at Oden Institute for Computational Engineering and Sciences, University of Texas at Austin, TX, USA}

\address[isuMechEAddress]{Department of Mechanical Engineering, Iowa State University, Iowa, USA 50011}
\address[isuMathAddress]{Department of Mathematics, Iowa State University, Iowa, USA 50011}
\address[utahAddress]{School of Computing, The University of Utah, Salt Lake City, Utah, USA 84112}
\address[curtinAddress]{Department of Applied Geology, Western Australian School of Mines, Faculty of Science and Engineering, Curtin University, Kent Street, Bentley, Perth, WA 6102, Australia}
\address[minResAddress]{Mineral Resources, Commonwealth Scientific and Industrial Research Organization (CSIRO), Kensington, Perth, WA 6152, Australia}
\address[cicAddress]{Curtin Institute for Computation, Curtin University, Kent Street, Bentley, Perth, WA 6102, Australia}

\begin{abstract}
	We present a fully-coupled, implicit-in-time framework for solving a thermodynamically-consistent Cahn-Hilliard Navier-Stokes system that models two-phase flows. In this work, we extend the block iterative method presented in Khanwale et al. [{\it Simulating two-phase flows with thermodynamically consistent energy stable Cahn-Hilliard Navier-Stokes equations on parallel adaptive octree based meshes}, J. Comput. Phys. (2020)], to a fully-coupled, provably second-order accurate scheme in time, while maintaining energy-stability. The new method requires fewer matrix assemblies in each Newton iteration resulting in faster solution time. The method is based on a fully-implicit Crank-Nicolson scheme in time and a pressure stabilization for an equal order Galerkin formulation. That is, we use a conforming continuous Galerkin (cG) finite element method in space equipped with a residual-based variational multiscale (RBVMS) procedure to stabilize the pressure.  We deploy this approach on a massively parallel numerical implementation using parallel octree-based adaptive meshes.  We present comprehensive numerical experiments showing detailed comparisons with results from the literature for canonical cases, including the single bubble rise, Rayleigh-Taylor instability, and lid-driven cavity flow problems.  We analyze in detail the scaling of our numerical implementation.
\end{abstract}
 
\begin{keyword}
two-phase flows \sep energy stable \sep adaptive finite elements \sep octrees \sep variational multiscale approach
\end{keyword}

\end{frontmatter}

%\linenumbers

% !TEX root = methods_paper_withpdfFigures.tex

\section{Introduction}
\label{sec:introduction}
\tikzexternaldisable

\subsection{Motivation}
Understanding the fundamental mechanisms of phase interactions is critical to developing accurate and efficient models of two-phase flows. In particular, insights into the phase interactions may lead to accurate, low-cost coarse-scale models for large systems (e.g., chemical/biological reactors); and optimization-based design of micro-scale systems (e.g., bio-microfluidics and advanced manufacturing using multiphase flows).  To achieve this understanding, we need physically accurate models that fully capture the phase interactions through a consistent description of the interfacial processes and the interface evolution.  Models using coupled Cahn-Hilliard Navier-Stokes (CHNS) equations bear the promise of providing such a description~\citep{ Anderson1998}.  

Specifically, applications in bio-microfluidics and advanced manufacturing are characterized by two-phase flows with a steady state or a periodic set of states~\citep{Teh2008,Dangla2013,Stoecklein2014,Stoecklein2016a}.  Therefore, simulations of such flows require numerical schemes designed to allow for large time steps.  With this objective in mind we focus on design of fully-implicit non-linear time scheme which allows for larger time steps.  We list the following key contributions of this paper. 
\begin{enumerate}
	\item Design second order fully-coupled fully implicit time stepping scheme building on the analysis in~\citet{Khanwale2020}.  
	\item Analyze the time-scheme for energy stability analysis which is an important property of the CHNS model. 
	\item Deploy the numerical method in a large scale parallel computational framework which utilizes highly efficient octree based adaptive meshes. 
\end{enumerate}

\subsection{Background}
  
Similar to level set methods~\citep{Osher2006,Yan2018}, CHNS models track the boundary between two phases using a smooth function, referred to as the \textit{phase field} function.  This approach allows for a diffuse transition between the physical properties from one phase to the other and circumvents modeling the jump discontinuities at the interface. The use of the Cahn-Hilliard (CH) equations to track the phase field offers many advantages, including mass conservation, thermodynamic consistency, and a free-energy-based description of surface tension with a well-established theory from non-equilibrium thermodynamics \citep{ Jacqmin1996, Jacqmin2000}.  Carefully designed numerical schemes allow the discrete numerical solutions of these CHNS models to inherit these continuous properties.
	
CHNS models couple the momentum equation governing an ``averaged mixture velocity'' with the interface-tracking Cahn-Hilliard equation. Even assuming that each fluid phase is incompressible, the resulting mixture velocity may not be solenoidal; the pointwise incompressibility depends on the averaging. Volume averaging, at least under strictly isothermal conditions, usually results in a solenoidal mixture velocity. On the other hand, mass averaging results in a non-solenoidal mixture velocity resulting in quasi-incompressible models (e.g.,~\citet{ Guo2017, Shokrpour2018}, and references therein). Here, we use the volume averaging strategy. The resulting solenoidal mixture velocity is a useful property in the subsequent numerical method development.
	
In~\citet{Khanwale2020} we proposed an energy-stable and mass-conserving discretization of the thermodynamically-consistent CHNS model; that approach used a block iterative method for solving the two sets of equations (i.e., Cahn-Hilliard (CH) and Navier-Stokes (NS)).  Therefore, the fully discrete system resulted in two non-linear systems of algebraic equations, one corresponding to the discretized version of the momentum equations, the other corresponding to the discretized Cahn-Hilliard equations.  Both sets of equations used an implicit time-stepping strategy alongside an internal (within each block iteration) Newton's method for solving the resulting non-linear algebraic equations.  While this strategy decoupled the implementation challenges of the CH and NS systems, it required multiple matrix assemblies due to the multiple block iterations within each time-step.  \citet{Xu2020} and \citet{Saurabh2020} showed that the matrix assembly and preconditioner setup in a framework such as \citet{Khanwale2020} can be very expensive. 

In the current work we seek to improve the results of~\citep{ Khanwale2020} in three key aspects. 
	\begin{enumerate}
		\item \textbf{Second-order energy-stable scheme:} We extend the time integration scheme presented in~\citep{ Khanwale2020} to second-order accuracy while maintaining energy-stability and mass conservation. Furthermore, to address computational efficiency concerns, we use a fully-coupled method instead of using a block-iterative approach.  The coupling entails solving the fully discretized CHNS system. Therefore, within each Newton iteration, we solve a linear system with six degrees of freedom per node (three velocities, one pressure, two phase field variables).
		
		\item \textbf{VMS-based stabilization for conforming Galerkin elements:} We extend the variational multiscale (VMS) based treatment in~\citep{ Khanwale2020} to the fully-coupled approach with conforming Galerkin elements.  We use VMS stabilization to circumvent the discrete inf-sup condition in equal order polynomial representations for velocity and pressure (e.g.,~\citet{ Volker2016}). This is especially important in the case of adaptive $h$-refinement~\citep{ Burstedde2011, Sundar2007, Dendro}, which is extensively used here.  
		
		\item \textbf{Scalable octree-based adaptive mesh:} We apply the proposed method to problems where we need sufficient resolution of the interfacial length scales to capture the interface dynamics accurately. To make this computationally tractable, we implement the proposed numerical scheme inside the~\dendroFive~\cite{ Dendro5} adaptive mesh refinement framework, which efficiently resolves the interface dynamics in 3D systems with highly deformable interfaces. This implementation extends the previous implementation of \cite{Khanwale2020}, which was done in the older ~\dendroFour~\cite{ Dendro} framework.
	\end{enumerate}

\subsubsection{Second-order energy-stable scheme}
There are two main approaches for CHNS modeling depending on the averaging used to define the mixture velocity.  Examples of CHNS models based on volume-averaged velocity include~\citet{ Kim2004a, Feng2006, Shen2010a, Shen2010b, Dong2012, Chen2016, Dong2016}, and \citet{Zhu2019}.  \citet{ Kim2004a}~used a strategy similar to block iteration, while~\citet{ Feng2006} used a fully-coupled approach similar to one we adopt in this work.  \citet{Shen2010a, Shen2010b,Chen2021}~used a block-solve strategy with linearized time-schemes that reduce their discretization to a sequence of elliptic equations for the velocity and phase fields.  Subsequently, \citet{ Han2015}~also used a 
block-iterative strategy with an energy-stable time scheme but with a non-linear scheme.  \citet{ Chen2016}~showed an energy-stable time scheme with a fully-coupled solver. \citet{ Guo2017}~recently reported a detailed analysis for a mass averaged mixture velocity CHNS system using a fully coupled strategy. \citet{Fu2021} presented a linear fully-coupled BDF2 based scheme which uses Scalar Auxiliary Variable approach to linearize the Cahn-Hilliard equations.  The SAV approach modifies the energy law and~\mbox{\citet{Fu2021}} prove stability of this modified energy law. \citet{Fu2021} use an SUPG/PSPG approach for pressure stabilization in contrast to the more fundamental VMS approach in the current paper.  Indeed the SUPG/PSPG approach is can be derived from the more general VMS approach (see ~\citet{ hughes2018multiscale}). In addition to pressure stabilization VMS provides a natural leeway into modeling high-Reynolds number flows~\citep{ Hughes1995,Bazilevs2007} as it uses a filtering approach similar to large-eddy simulation (LES).

We emphasize that the focus in~\citet{Khanwale2020} and the current paper is on non-linear schemes. This is in contrast to recent elegant work in designing linear schemes for the CH-NS equations~\citep{Shen2010a, Shen2010b, Dong2012, Chen2016, Dong2016, Zhu2019}. Non-linear schemes in general allows using larger time-steps. This feature is important, particularly for problems where we are most interested in rapidly reaching a steady state (or a periodic set of states).  There are many critical applications in bio-microfluidics where one needs to determine steady state (or limit cycle) behavior of the two-phase system~\citep{Mutlu2018, Stoecklein2018}. Linearized schemes~\citep{Shen2010a, Shen2010b, Shen2015,Zhu2019} generally have a stringent time-step restriction and therefore may not be well suited for this particular class of applications. In contrast, such linear schemes are particularly well-suited for problems with naturally small time steps, like tracking instabilities\footnote{It would be very valuable to the bio-microfluids community to have a comparative assessment of optimized implementations of both linear/de-coupled vs non-linear/coupled to study the trade-off between smaller times step requirement versus increased computational complexity per time step. This is beyond the scope of this paper; however, we have made our code open-source to encourage such an analysis.}.
% \textcolor{red}{Prior work by~\citep{Shen2010a, Shen2010b, Shen2015, Chen2016, Zhu2019} use linearized schemes which have harsher time-step requirements and are suitable for problems with naturally small time steps (instabilities). Our non-linear, fully-coupled scheme in contrast allows for larger time-steps while still being energy-stable which can be advantageous for problems with naturally larger time-steps (marching towards steady-state).}  
Here, in~\cref{subsec:time_scheme}, we prove that a second-order extension of the time scheme presented in~\citep{ Khanwale2020} for the fully-coupled solver is energy-stable and mass conserving.  The benefit of such a time integration scheme is that it does not require storage of more than one previous time step, while still providing accuracy and ensuring energy stability.  

\subsubsection{VMS-based stabilization for conforming Galerkin elements}
Discretizing the momentum equations in the CHNS model with solenoidal velocity requires velocity-pressure pairs that satisfy the discrete inf-sup condition (e.g., Ladyzhenskaya-Babuska-Brezzi (LBB) stable).   However, we prefer to use standard conforming Galerkin finite elements to leverage parallel adaptive meshing tools. These methods circumvent the discrete inf-sup condition by using stabilization approaches such as \textit{grad-div} stabilizations.  We build such a stabilization using a VMS approach~\citep{ Hughes2000, Ahmed2017}.  The VMS approach %as a result of elemental estimates 
involves the use of an adjustable constant, $\tau$, which requires careful design for a fully-coupled system.  In this work, we extend the formulation based on the Residual-Based Variational Multiscale Method (RBVMS)~\cite{ Bazilevs2007} we presented in~\citep{ Khanwale2020} to the fully coupled approach in~\cref{subsec:space_scheme}. 
	
\subsubsection{Scalable octree-based adaptive mesh generation} 
Adaptive spatial discretizations are popular in computational sciences~\citep{ Coupez2013, Hachem2013, Hachem2016} to improve efficiency and resolution quality. In some applications (e.g., ~\citep{ MasadoDKT, mantle, Fernando2018_GR}) an adaptive spatial discretization is the key to make those simulations feasible on modern supercomputers. In distributed-memory computations, adaptive discretizations introduce additional computational challenges such as load-balancing, low-cost mesh generation, and mesh-partitioning. The scalability of the algorithms used for mesh generation and partitioning is crucial, especially when the represented solution requires frequent re-meshing. Octrees~\citep{ Sundar2008, BursteddeWilcoxGhattas11, Fernando2018_GR, MasadoDKT} are widely used in the community due to their simplicity and their extreme parallel scalability. In~\citep{ Khanwale2020}, we used \dendroFour~\cite{ Dendro} as the underlying parallel octree library. \dendroFour~is a parallel octree library that supports linear finite element computations on adaptive octrees. \dendroFive~\cite{ Dendro5} extends \dendroFour~and supports higher-order finite difference (FD), finite volume (FV), and finite element (FE) discretizations on fully adaptive octrees. In the present work, we use \dendroFive~\cite{ Dendro5} as our primary parallel octree mesh library. From now on, we use \dendro~to refer to \dendroFive, unless otherwise specified. 
	
\dendro~\cite{ Dendro5} is a freely available open-source library that is currently used by several research communities to tackle problems in computational relativity~\cite{ Fernando2018_GR}, relativistic fluid dynamics, and other computational science applications. \dendro~octree generation and partitioning is based on the \textsc{TreeSort}~\cite{ Fernando:2017} algorithm.  Octant neighborhood information is needed to perform numerical computations on topological octrees. To compute these neighborhood data structures, we use the \textsc{TreeSearch} algorithm, which has better scalability compared to traditional binary search approaches~\cite{ Fernando2018_GR}. \dendro \, enforces a 2:1 balancing constraint that ensures that adjacent octants differ by at most a factor of 2 in size. This constraint imposition uses top-down and bottom-up traversals with minor modifications to the \textsc{TreeSort} algorithm.
We detail the adaptive meshing and scalability of our framework in~\cref{sec:octree_mesh} and~\cref{sec:scaling}, respectively.   
% !TEX root = methods_paper_withpdfFigures.tex

\section{Governing equations}
\label{sec:governing_equations}

	%The coupling of momentum transfer equations (NS) and interface tracking equations (CH) is non-trivial.  Governing equations which follow second law of thermodynamics can be derived using multiple methodologies vis-a-vis; 1) Coleman-Noll procedure; 2) Onsager's reciprocal relationships; 3) Lagrange multipliers \cite{Abels2012}. 
	We consider a bounded domain $\Omega \subset \mathbb{R}^d$, for $d = 2,3$ containing two immiscible fluids, and the time interval, $[0, T]$. Let $\rho_{+}$ ($\eta_{+}$ ) and $\rho_{-}$ ($\eta_{-}$) denote the specific density (viscosity) of the two phases. Let the phase field function, $\phi$, be the variable  that tracks the location of the phases and varies smoothly between $+1$ and $-1$.  The non-dimensional density is 
	\begin{equation}
	\rho(\phi) = \alpha\phi + \beta, \qquad \text{where} \quad \alpha = \frac{\rho_+ - \;\rho_-}{2\rho_+}, \quad \beta = \frac{\rho_+ + \;\rho_-}{2\rho_+}.
	\label{eqn:alpha_beta_defn}
	\end{equation}
Note that our non-dimensional form uses the specific density/viscosity of fluid 1 as the non-dimensionalizing density/viscosity. Similarly, the non-dimensional viscosity is
\begin{equation}
\eta(\phi) = \gamma\phi + \xi, \qquad \text{where} \quad \gamma = \frac{\eta_+ - \;\eta_-}{2\eta_+}, \quad \xi = \frac{\eta_+ + \;\eta_-}{2\eta_+}.
\end{equation}
The governing equations in their non-dimensional form are as follows:

\begin{align}
%\begin{empheq}[box=\fbox]{gather}
\begin{split}
	\text{Momentum Eqns:} & \quad \pd{\left(\rho(\phi) v_i\right)}{t} + \pd{\left(\rho(\phi)v_iv_j\right)}{x_j} + \frac{1}{Pe}\pd{\left(J_jv_i\right)}{x_j} +\frac{Cn}{We} \pd{}{x_j}\left({\pd{\phi}{x_i}\pd{\phi}{x_j}}\right) \\
	& \quad \quad \quad + 
	\frac{1}{We}\pd{p}{x_i} - \frac{1}{Re}\pd{}{x_j}\left({\eta(\phi)\pd{v_i}{x_j}}\right) - \frac{\rho(\phi)\hat{{g_i}}}{Fr} = 0, \label{eqn:nav_stokes} %- f_v(x_i,t)
	\end{split} \\
	%Thermodynamic consistency
	\text{Thermo Consistency:} & \quad J_i = \frac{\left(\rho_- - \rho_+\right)}{2\;\rho_+ Cn} \, m(\phi) 
	\, \pd{\mu}{x_i},\\
	%Solenoidality
	\text{Solenoidality:} & \quad \pd{v_i}{x_i} = 0, \label{eqn:solenoidality}\\
	\text{Continuity:} & \quad \pd{\rho(\phi)}{t} + \pd{\left(\rho(\phi)v_i\right)}{x_i}+
	\frac{1}{Pe} \pd{J_i}{x_i} = 0, \label{eqn:cont}\\
	\text{Chemical Potential:} & \quad \mu = \psi'(\phi) - Cn^2 \pd{}{x_i}\left({\pd{\phi}{x_i}}\right)  ,\label{eqn:mu_eqn} %+ f_{\mu}(x_i,t)
	\\ 
	\text{Cahn-Hilliard Eqn:} & \quad \pd{\phi}{t} + \pd{\left(v_i \phi\right)}{x_i} - \frac{1}{PeCn} \pd{}{x_i}\left({m(\phi){\pd{\mu}{x_i}}}\right) = 0. %- f_{\phi}(x_i,t)
	\label{eqn:phi_eqn}
%\end{empheq}	
\end{align}
Note that we use Einstein notation throughout this work; in this notation $v_i$ represents the $i^{\text{th}}$ component of the vector $\vec{v}$, and any repeated index is implicitly summed over.
In the above equations, $\vec{v}$ is the volume-averaged mixture velocity, $p$ is the volume-averaged pressure, $\phi$ is the phase field (interface tracking variable), and $\mu$ is the chemical potential.  Non-dimensional mobility $m(\phi)$ is assumed to be a constant with a value of one.  The non-dimensional parameters are as follows: Peclet, $Pe = \frac{u_{r} L_{r}^2}{m_{r}\sigma}$; Reynolds, $Re = \frac{u_{r} L_{r}}{\nu_{r}}$; Weber, $We = \frac{\rho_{r}u_{r}^2 L_{r}}{\sigma}$; Cahn, $Cn = \frac{\varepsilon}{L_{r}}$; and Froude, $Fr = \frac{u_{r}^2}{gL_{r}}$, with $u_{r}$ and $L_r$ denoting the reference velocity and length, respectively. Here, $m_{r}$ is the reference mobility, $\sigma$ is the scaling interfacial tension, $\nu_{r} = \eta_{+}/\rho_{+}$, $\varepsilon$ is the interface thickness, $g$ is gravitational acceleration. $\hat{\vec{g}}$ is a unit vector defined as $\left(0, -1, 0\right)$ denoting the direction of gravity and  $\psi(\phi(\vec{x}))$ is a known free-energy function.  
%
%The source terms $f_{v}, f_{\mu}, f_{\phi}$ are only used for numerical testing using manufactured solutions and are set to zero for the simulations of realistic systems. 
%
%
In particular, we use the polynomial form of the free energy density defined as follows:
\begin{align}
		\psi(\phi) = \frac{1}{4}\left( \phi^2 - 1 \right)^2 \qquad \text{and} \qquad
		\psi'(\phi) = \phi^3 - \phi.
		\label{eqn:free_energy}
\end{align}

The system of equations \cref{eqn:nav_stokes} -- \cref{eqn:phi_eqn} has a dissipative law given by: 
\begin{equation}
\d{E_{\text{tot}}}{t} = -\frac{1}{Re}  \int_{\Omega} \frac{\eta(\phi)}{2} \norm{\nabla \vec{v}}_F^2 \mathrm{d}\vec{x} - \frac{Cn}{We} \int_{\Omega}m(\phi) \norm{\nabla \mu}^2  \mathrm{d}\vec{x} \le 0,
\end{equation}
where the total energy is 
\begin{equation}
E_{\text{tot}}(\vec{v},\phi, t) = \frac{1}{2}\int_{\Omega}\rho(\phi) \norm{\vec{v}}^2 \mathrm{d}\vec{x} + \frac{1}{CnWe}\int_{\Omega} \left(\psi(\phi) + \frac{Cn^2}{2} \norm{\nabla\phi}^2 + \frac{1}{Fr} \rho(\phi) y \right) \mathrm{d}\vec{x}.
\label{eqn:energy_functional}
\end{equation}
The norms used in the above expression are the Euclidean vector norm and the Frobenius matrix norm:
\begin{equation}
\norm{\vec{v}}^2 := \sum_i \abs{v_i}^2 \qquad \text{and} \qquad 
\norm{\nabla\vec{v}}^2_F := \sum_i \sum_j \abs{\frac{\partial v_i}{\partial x_j}}^2.
\end{equation}

\begin{remark}
A realistic interface thickness (parametrized by the Cahn number) is in the nanometer range; resolving this scale is computationally intractable, as all the other scales in the problem are orders of magnitude larger. Therefore, an ansatz that diffuse interface models follow is that the solution tends to the real physics in the limit of $Cn \rightarrow 0$.  This limiting process progressively reduces the Cahn number from large to small until the dynamics become independent of the Cahn number.  However, the choice of Cahn number ($Cn$) determines the Peclet number ($Pe$), which is given by $Pe = \frac{u_{r} L_{r}^2}{m\sigma}$. $Pe$ represents the ratio of the advection timescale to the diffuse interface relaxation time to its equilibrium $\tanh$ profile (a purely computational construct).  \citet{ Magaletti2013} reported a careful asymptotic analysis of these timescales, which suggests a $1/Pe = \alpha Cn^2$ scaling. We use this scaling with $\alpha = 3$.
\end{remark}

%\begin{remark} The volume averaged mixture velocity ($\vec{v}$) is solenoidal (see \cref{eqn:solenoidality}), but momentum ($\rho \vec{v}$) is not (see \cref{eqn:cont}).  Equation \eqref{eqn:cont} is the mass conservation law, and technically the solenoidality of the mixture velocity has nothing to do with mass conservation law, but it is a convenient feature of the model. We make this distinction because in the context of incompressible Newtonian single phase flow mass conservation reduces to solenoidality of the velocity field (d'Alembert condition) which is not the case here.
%\end{remark}

% !TEX root = methods_paper_withpdfFigures.tex

%\section{Variational problem and algorithm formulation}
\section{Numerical method and its properties}
\label{sec:numerical_tecniques}

We extend the scheme of~\citet{ Khanwale2020} to a fully-implicit, fully-coupled Crank-Nicolson time-marching scheme for the system of~\cref{eqn:nav_stokes} -- \cref{eqn:phi_eqn}.  This extension delivers better accuracy and energy-stability for larger time-steps while only storing data structures of one previous time-step. While this advantage may not be seem significant for smaller cases,  it is very useful for large-scale simulations with billions of unknowns. 
%Additionally, using this implicit time scheme allows us to prove existence of solutions in the semi-discrete sense for the Cahn-Hilliard equation.

%Next we write the discrete time integration scheme. 
Let $\delta t$ be a time-step; let $t^k := k \delta t$; thus, we define the following time-averages: 
\begin{equation}
\label{eqn:averaging1}
	\widetilde{\vec{v}}^{k} := \frac{\vec{v}^{k} + \vec{v}^{k+1}}{2}, \quad \tp^{k} := \frac{{p}^{k+1}+{p}^{k}}{2},
	\quad \tphi^{k} := \frac{{\phi}^{k+1} + {\phi}^{k}}{2}, \quad \text{and} \quad \tmu^{k} := \frac{{\mu}^{k+1} + {\mu}^{k}}{2},
\end{equation}
and the following function evaluations:
\begin{equation}
\label{eqn:psi_ave_def}
	\tpsi^k := \psi\left( \tphi^k \right), \qquad %\text{and} \qquad
	\tpsi'^{k} := \psi'\left( \tphi^k \right), \qquad
	\trhok := \rho\left(\tphi^k\right), \qquad \text{and} \qquad
	\tetak := \eta\left(\tphi^k\right).
\end{equation}
Using these temporal values, we define our time-discretized weak form of
the Cahn-Hilliard Navier-Stokes (CNHS) equations. 

\begin{definition}{}{variational_form_sem_disc}
	Let $(\cdot,\cdot)$ be the standard $L^2$ inner product. We state the time-discrete variational problem as follows: find $\vec{v}^{k+1}(\vec{x}) \in \vec{H}_0^1(\Omega)$, $p^{k+1}(\vec{x})$, $\phi^{k+1}(\vec{x})$, $\mu^{k+1}(\vec{x})$ $\in {H}^1(\Omega)$ such that
	\begin{align}
	\begin{split}
	\text{Momentum Eqns:} & \quad \left(w_i, \, \trhok \, \frac{v^{k+1}_i - v^k_i}{\delta t}\right) + \left(w_i, \, \trhok \, \tvj^{k} \, \pd{\tvi^{k}}{x_j}\right) \\ & \quad +
	\frac{1}{Pe}\left(w_i, \, \tJj^{k} \, \pd{\tvi^{k}}{x_j}\right) -
	\frac{Cn}{We} \left(\pd{w_i}{x_j}, \, {\pd{\tphi^{k}}{x_i}\pd{\tphi^{k}}{x_j}}\right)  -
	\frac{1}{We}\left(\pd{w_i}{x_i}, \, {\tp^{k}} \right) \\ & \quad + \frac{1}{Re}\left(\pd{w_i}{x_j}, \,\tetak {\pd{\tvi^{k}}{x_j}} \right) -
	\left(w_i,\frac{\trhok \, \hat{g_i}}{Fr}\right) = 0, 
	\label{eqn:nav_stokes_var_semi_disc}
	\end{split} \\ 
	\text{Thermo Consistency:} & \quad \tJi^{k} = \frac{\left(\rho_- - \rho_+ \right)}{2\;\rho_+ Cn} \, \pd{\tmu^{k}}{x_i}, \label{eqn:thermo_consistency_semi_disc} \\
	\text{Solenoidality:} & \quad 
	%\left(q, \, \pd{v^{k}_{i}}{x_i}\right) = 0, 
	\quad \left(q, \, \pd{v^{k+1}_{i}}{x_i}\right) = 0,
	\label{eqn:cont_var_semi_disc} \\
	\text{Chemical Potential:} & \quad 
	-\left(q,\tmu^{k}\right) + \left(q, \tpsi'^{k} \right) + Cn^2 \left(\pd{q}{x_i}, \, {\pd{\tphi^{k}}{x_i}}\right)   = 0, \label{eqn:mu_eqn_var_semi_disc}\\
	\text{Cahn-Hilliard Eqn:} & \quad \left(q, \frac{\phi^{k+1} - \phi^k}{\delta t} \right) -
	 \left(\pd{q}{x_i}, \, \tvi^{k} \tphi^{k} \right) + \frac{1}{PeCn} \left(\pd{q}{x_i}, \, {\pd{\tmu^{k}}{x_i}} \right) = 0,
	\label{eqn:phi_eqn_var_semi_disc}\\
	\text{Continuity:} & \quad
	\left(\frac{\rho\left(\phi^{k+1}\right) - \rho\left(\phi^{k}\right)}{\delta t}, \, q\right) + \left(\pd{\left(\trhok \tvj^k \right)}{x_j}, \, q\right)-
	\left(\frac{1}{Pe} \tJj, \, \pd{q}{x_j}\right) = 0,
	\label{eqn:cont_consv_var_semi_disc}
	\end{align}
	$\forall \vec{w} \in \vec{H}^1_0(\Omega)$, $\forall q \in H^1(\Omega)$, given $\vec{v}^{k} \in \vec{H}_0^1(\Omega)$, and $\phi^{k},\mu^{k} \in H^1(\Omega)$.
\end{definition}

\begin{remark}
	In~\cref{defn:variational_form_sem_disc} for the momentum equations, the boundary terms in the variational form are zero because the velocity and the basis functions live in $\vec{H}^1_0(\Omega)$. Also we use the no flux boundary condition for $\phi$ and $\mu$, which makes boundary terms i.e.,  $\left(q, {\pd{\tphi^{k}}{x_i}} \hat{n}_i\right)$ and $\left(q, {\pd{\mu^{k}}{x_i}} \hat{n}_i\right)$, go to zero.  We use these boundary conditions for all the proofs in this paper.  The numerical examples in the paper also use these boundary conditions unless explicitly noted otherwise.
\end{remark}
%We solve the cG(1) approximated version of variational problem \cref{eqn:nav_stokes_var_semi_disc} -- \cref{eqn:phi_eqn_var_semi_disc} using a block iteration technique, i.e., we treat the Navier-Stokes equations and the Cahn-Hilliard equations as two distinct sub-problems. Thus, two non-linear solvers are stacked together inside the time loop. These non-linear solvers are solved self-consistently until the change (error between current non-linear solve and previous non-linear solve) in the respective solutions is less than a set tolerance within every time step.  See \cref{fig:flowchart_block} for a  flowchart of the approach. We emphasize that a block iterative approach allows us to make the coupling variables from one equation constant in the other during each respective non-linear solve.  For example, for the momentum equation, all the terms depending on $\phi$ (which is solved in the Cahn-Hilliard sub problem) are known. Similarly the mixture velocity used in the Cahn-Hilliard equation solve.  

\begin{remark} While $\phi \in [-1, 1]$ in the continuous equations, the discrete $\phi$ may violate these bounds. These bound violations may not change the dynamics of $\phi$  adversely, but they could lose the strict positivity of some quantities which depend on $\phi$  (e.g., mixture density $\rho(\phi)$ and viscosity $\eta(\phi)$). This effect is especially significant for high density and viscosity contrasts.
%This causes an drift of the bulk phase density from the true specific density of that phase, with some locations exhibiting negative density (or viscosity). This effect is especially possible for very high density ratio between the two fluids, like in the case of a water-air system ($1:10^{-3}$). 
We fix this issue by saturation scaling (i.e., we pull back the value of $\phi$ only for the calculation of density and viscosity). We, therefore, define $\phi^*$ for the mixture density and viscosity calculations, where $\phi^*$ is: 
\begin{equation}
\phi^* := 
\begin{cases}
\phi, &\quad \text{if} \;\; \abs{\phi} \leq 1, \\
\mathrm{sign}(\phi), &\quad \text{otherwise.} 
\end{cases}
\label{eqn:phi_for_density}
\end{equation} 
We note that this pull back strategy to keep the density and viscosity realizable is a common technique in Cahn-Hilliard Navier Stokes models~\citep{Shen2010a, Shen2010b, Dong2012, Chen2016, Dong2016, Guo2017, Zhu2019}. 
\label{rmk:phi_pullback}
\end{remark}

\subsection{Energy-stability of the time-stepping scheme}
\label{subsec:time_scheme}
In this subsection, we prove the energy-stability of the time time-stepping scheme.  The result for energy stability in~\citep{ Khanwale2020} is limited to the case of equal density.  Here we try to extend the proof to unequal densities.  For completeness, we recall some of the crucial results from~\citep{ Khanwale2020} which are common.   We begin with mass conservation.
\begin{proposition}{Mass conservation}{mass_conservation}
	The scheme of~\cref{eqn:nav_stokes_var_semi_disc} -- \cref{eqn:phi_eqn_var_semi_disc} with the following boundary conditions: 
	\begin{equation}
	\pd{\tmu}{x_i} \hat{n}_i\Bigl\rvert_{\partial \Omega} = 0, \quad \pd{\tphi}{x_i} \hat{n}_i\Bigl\rvert_{\partial \Omega} = 0, \quad \widetilde{\vec{v}}^{k}\Bigl\rvert_{\partial \Omega} = \vec{0},
	\end{equation}
	where $\hat{\vec{n}}$ is the outward normal to the boundary $\partial \Omega$,
	is globally mass conservative: 
	\begin{equation}
	\int_{\Omega} \phi^{k+1} \, \mathrm{d}\vec{x}  = \int_{\Omega} \phi^{k} \, \mathrm{d}\vec{x}.
	\end{equation}
\end{proposition}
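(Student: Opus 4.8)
The plan is to test the discrete Cahn-Hilliard equation~\cref{eqn:phi_eqn_var_semi_disc} against the constant function $q \equiv 1$. Because $\Omega$ is bounded, the constant function lies in $H^1(\Omega)$, so it is an admissible test function for that equation and no extra regularity is required. This is the only ingredient the proof really needs; everything else is a direct consequence of the structure of the weak form.

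The key observation is that $\pd{q}{x_i} = 0$ when $q$ is constant. First I would note that this annihilates both the advection term $\left(\pd{q}{x_i},\, \tvi^{k}\tphi^{k}\right)$ and the chemical-potential flux term $\frac{1}{PeCn}\left(\pd{q}{x_i},\, \pd{\tmu^{k}}{x_i}\right)$, since each pairs a spatial gradient of $q$ against an $L^2$ function. The only surviving contribution is the discrete time-derivative term, which reduces to $\left(1,\, \frac{\phi^{k+1}-\phi^{k}}{\delta t}\right) = \frac{1}{\delta t}\int_{\Omega}\left(\phi^{k+1}-\phi^{k}\right)\mathrm{d}\vec{x}$. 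Setting the equation to zero and clearing the factor $\delta t$ then yields $\int_{\Omega}\phi^{k+1}\,\mathrm{d}\vec{x} = \int_{\Omega}\phi^{k}\,\mathrm{d}\vec{x}$, which is precisely the claim.

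I do not expect a genuine analytical obstacle here: the entire argument is a single well-chosen test function followed by the observation that all terms paired with $\pd{q}{x_i}$ vanish. The one subtlety worth flagging is the role of the boundary conditions stated in the proposition. They do not enter the final computation directly; rather, they are exactly what legitimizes writing~\cref{eqn:phi_eqn_var_semi_disc} without boundary integrals. The no-slip condition $\tvecv^{k} = \vec{0}$ on $\partial\Omega$ removes the advective boundary flux, and the no-flux conditions on $\tmu$ and $\tphi$ remove the diffusive boundary flux, both of which would otherwise appear upon integration by parts. Only because these fluxes are already absorbed into the weak form does the constant test function leave behind nothing but the discrete time derivative, so the mechanism through which the advection and mobility terms could have contributed is entirely the gradient of $q$, which is zero.
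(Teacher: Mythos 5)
Your proposal is correct, and it is exactly the standard argument implicit in the paper: the paper states \cref{prop:mass_conservation} without reproducing a proof (it recalls the result from Khanwale et al.\ 2020 and verifies it numerically), and the intended proof is precisely your choice of the constant test function $q \equiv 1$ in \cref{eqn:phi_eqn_var_semi_disc}, which kills the advection and chemical-potential flux terms and leaves $\int_{\Omega}\left(\phi^{k+1}-\phi^{k}\right)\mathrm{d}\vec{x}=0$. Your remark that the stated boundary conditions enter only by justifying the boundary-term-free weak form is also the right way to account for them.
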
 
We verify this claim numerically in~\cref{subsec:manfactured_soln_result,subsec:single_rising_drop_2D}.
\begin{lemma}{Weak equivalence of forcing}{forcing_equivalent}
	The forcing term due to Cahn-Hilliard in the momentum equation,~\cref{eqn:nav_stokes_var_semi_disc}, with the test function $w_i = \delta t \, \tvi^k$, becomes
	\begin{equation}
	\frac{Cn}{We}\left( \pd{}{x_j}\left({\pd{\tphi^k}{x_i}\pd{\tphi^{k}}{x_j}}\right), \delta t \, \tvi^k\right) = \frac{\delta t}{WeCn}\left({\tphi}^k\pd{\tmu^{k}}{x_i},\tvi^k\right),
	\label{eqn:lemma_weak_equiv_forcing}
	\end{equation}
	$\forall \;\; \tphi^k$, $\tmu^{k} \in H^1(\Omega)$, and $\forall \;\; \widetilde{\vec{v}}^k \in  \vec{H}_{0}^1(\Omega)$, where $\vec{v}^k, \vec{v}^{k+1}, p^k, p^{k+1}, \phi^k, \phi^{k+1}, \mu^{k},\mu^{k+1}, $ satisfy \cref{eqn:nav_stokes_var_semi_disc} -- \cref{eqn:phi_eqn_var_semi_disc}.
	%\label{lem:forcing_equivalent}
\end{lemma}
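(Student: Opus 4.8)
The plan is to exploit the fact that the Cahn--Hilliard (capillary) forcing on the left of \cref{eqn:lemma_weak_equiv_forcing} is, up to a single term of the shape $\tmu^k\,\partial\tphi^k/\partial x_i$, a \emph{pure gradient}, together with the observation that any gradient pairs to zero against the velocity $\tvi^k$, which is divergence-free and vanishes on $\partial\Omega$. Since the left-hand side contains only $\tphi^k$ while the right-hand side contains $\tmu^k$, the chemical-potential relation \cref{eqn:mu_eqn_var_semi_disc} (equivalently \cref{eqn:mu_eqn}), which is the only bridge between $\phi$ and $\mu$, must enter the argument; this substitution is the crux of the proof.

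Writing $\partial_i := \partial/\partial x_i$, I would first apply the pointwise identity $\partial_j(\partial_i\tphi^k\,\partial_j\tphi^k) = \tfrac12\,\partial_i\norm{\nabla\tphi^k}^2 + \partial_i\tphi^k\,\Delta\tphi^k$, which follows from the product rule and symmetry of mixed second derivatives. Next I would insert the chemical-potential relation in the form $Cn^2\Delta\tphi^k = \psi'(\tphi^k) - \tmu^k$ to replace $\Delta\tphi^k$, and use the chain rule $\psi'(\tphi^k)\,\partial_i\tphi^k = \partial_i\psi(\tphi^k)$. After multiplying by $Cn/We$ this rewrites the forcing integrand as $\partial_i\!\left(\tfrac{Cn}{2We}\norm{\nabla\tphi^k}^2 + \tfrac{1}{WeCn}\psi(\tphi^k)\right) - \tfrac{1}{WeCn}\,\tmu^k\,\partial_i\tphi^k$, i.e.\ a gradient plus the $\tmu^k\,\partial_i\tphi^k$ term. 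Pairing with $\delta t\,\tvi^k$ and integrating by parts, each gradient term vanishes, because $(\partial_i f,\tvi^k) = -(f,\partial_i\tvi^k) + \int_{\partial\Omega} f\,\tvi^k\,\hat n_i = 0$ by solenoidality $\partial_i\tvi^k = 0$ and $\tvi^k = \vec 0$ on $\partial\Omega$. This leaves precisely $-\tfrac{\delta t}{WeCn}\,(\tmu^k\,\partial_i\tphi^k,\tvi^k)$.

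Finally, I would convert this surviving term into the stated right-hand side with one further integration by parts: from $(\partial_i(\tmu^k\tphi^k),\tvi^k)=0$ (again a gradient tested against $\tvi^k$) one obtains $(\tmu^k\,\partial_i\tphi^k,\tvi^k) = -(\tphi^k\,\partial_i\tmu^k,\tvi^k)$, so the surviving term equals $+\tfrac{\delta t}{WeCn}\,(\tphi^k\,\partial_i\tmu^k,\tvi^k)$, which is exactly \cref{eqn:lemma_weak_equiv_forcing}.

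The main difficulty is bookkeeping rather than conceptual depth: I must carry the signs and the $Cn$/$We$ prefactors correctly through the two gradient-elimination steps, and I must justify that $\tvi^k$ is genuinely solenoidal. The latter holds because $\tvi^k = (v_i^k + v_i^{k+1})/2$ and each of $\vec v^k$, $\vec v^{k+1}$ is divergence-free by \cref{eqn:cont_var_semi_disc} at its own time level. A secondary technical point is that writing $\Delta\tphi^k$ pointwise presumes more spatial regularity than the $H^1$ membership assumed in \cref{defn:variational_form_sem_disc}; this is the standard formal smoothness assumption of a semi-discrete energy analysis, and it can be avoided altogether by invoking the weak relation \cref{eqn:mu_eqn_var_semi_disc} directly in place of its strong form.
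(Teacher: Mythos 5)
Your proof is correct and follows essentially the same route the paper relies on (recalled from Khanwale et al.~2020 and supported by the appendix proposition): expand $\partial_j(\partial_i\tphi^k\,\partial_j\tphi^k)$ via the product rule, eliminate $\Delta\tphi^k$ using the chemical-potential relation, and annihilate the resulting gradient terms against the solenoidal, boundary-vanishing velocity, with one final integration by parts turning $-(\tmu^k\,\partial_i\tphi^k,\tvi^k)$ into $+(\tphi^k\,\partial_i\tmu^k,\tvi^k)$. The sign and prefactor bookkeeping, as well as the caveats on solenoidality of $\tvi^k$ and on the formal use of the strong form of \cref{eqn:mu_eqn_var_semi_disc}, are all handled correctly.
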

\begin{lemma}{}{free_en_taylor}
	The following identity holds: 
	\begin{equation}
	\begin{split}
	\left({\tpsi'^{k}\left(\vec{x}\right)}, \phi^{k+1}\left(\vec{x}\right) - \phi^k\left(\vec{x}\right)\right) &= \left(\psi(\phi^{k+1}\left(\vec{x}\right)) - \psi(\phi^{k}\left(\vec{x}\right)), 1\right) \\
	%&-  \frac{1}{24} \left({\psi^{\mathrm{\prime\prime\prime}}(\lambda\left(\vec{x}\right))},
	&-  \frac{1}{24} \left(\d{^3\psi}{\phi^3}\Biggl|_{\phi=\lambda\left(\vec{x}\right)},
	\left(\phi^{k+1}\left(\vec{x}\right) - \phi^k\left(\vec{x}\right)\right)^3\right),
	\end{split}
	\label{eq:cubic_term}
	\end{equation}
	for some $\lambda\left(\vec{x}\right)$ between $\phi^k\left(\vec{x}\right)$ and $\phi^{k+1}\left(\vec{x}\right)$.
	%\label{prop:free_en_taylor}
\end{lemma}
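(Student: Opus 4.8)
The plan is to read the statement as a \emph{pointwise} identity in $\vec{x}$ that is then integrated against the constant test function inside the $L^2$ pairing $(\cdot,\cdot)$. Recall that $\tpsi'^{k} = \psi'(\tphi^k)$ with $\tphi^k = (\phi^{k+1}+\phi^k)/2$, so at each fixed $\vec{x}$ the integrand on the left is $\psi'(m)(b-a)$ with the abbreviations $a := \phi^k(\vec{x})$, $b := \phi^{k+1}(\vec{x})$, and $m := \tphi^k(\vec{x}) = (a+b)/2$. Thus the whole lemma reduces to the one-variable claim
\begin{equation}
\psi'(m)\,(b-a) = \psi(b) - \psi(a) - \tfrac{1}{24}\,\psi'''(\lambda)\,(b-a)^3,
\end{equation}
for some $\lambda$ lying between $a$ and $b$, after which I would integrate over $\Omega$.

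First I would establish this scalar identity, which is exactly the error formula for the midpoint quadrature rule applied to $g := \psi'$: writing $a = m-h$, $b = m+h$ with $h = (b-a)/2$ and Taylor-expanding $\psi(m\pm h)$ about $m$, the even-order terms cancel in $\psi(b)-\psi(a)$ and one is left with $\psi(b)-\psi(a) = (b-a)\psi'(m) + \tfrac{(b-a)^3}{24}\psi'''(\xi)$ for an intermediate point $\xi$; rearranging gives the claim. Because the free energy is the quartic double well, so that $\psi'(\phi) = \phi^3-\phi$ and $\psi'''(\phi)=6\phi$ is affine, I expect the direct computation to be even cleaner: expanding explicitly yields $\psi(m+h)-\psi(m-h) = 2mh(m^2+h^2-1)$, while $(b-a)\psi'(m) = 2mh(m^2-1)$, so the remainder is exactly $2mh^3 = \tfrac{m}{4}(b-a)^3 = \tfrac{1}{24}\psi'''(m)(b-a)^3$. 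In other words the identity holds \emph{exactly} with the explicit choice $\lambda(\vec{x}) = \tphi^k(\vec{x})$, which automatically lies between $\phi^k$ and $\phi^{k+1}$.

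Finally I would integrate the pointwise identity over $\Omega$ and identify each term with the corresponding $L^2$ inner product to recover \cref{eq:cubic_term}. The only point needing a word of care is the meaning of the last pairing: with the abstract ``some $\lambda$'' coming from a Lagrange/mean-value remainder, one would have to argue that $\vec{x} \mapsto \lambda(\vec{x})$ is measurable so that $(\psi'''|_{\phi=\lambda},\,(\phi^{k+1}-\phi^k)^3)$ is a genuine integral. Taking the explicit midpoint value $\lambda = \tphi^k \in H^1(\Omega)$ sidesteps this entirely, since $\psi'''(\tphi^k)=6\tphi^k$ is then manifestly integrable. Beyond this bookkeeping there is no real obstacle; the content is the elementary midpoint-rule remainder, and exploiting the specific quartic potential makes the remainder point exact rather than merely existent.
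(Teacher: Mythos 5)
Your proposal is correct. Note first that the paper itself contains no proof of this lemma: it is recalled from Khanwale et al.\ (2020) (the text preceding it says these results are ``recalled for completeness''), and the intended argument there is exactly your first step --- the midpoint-rule remainder, obtained by Taylor-expanding $\psi$ at $\phi^k = \tphi^k - h$ and $\phi^{k+1} = \tphi^k + h$ about the midpoint $\tphi^k$ with $h = (\phi^{k+1}-\phi^k)/2$, cancelling the even-order terms in the difference, and merging the two third-derivative remainders into a single $\psi'''(\lambda)$ with $\lambda$ between $\phi^k$ and $\phi^{k+1}$ via the intermediate value theorem (this step needs $\psi'''$ continuous, which holds here); integrating the pointwise identity over $\Omega$ then yields \cref{eq:cubic_term}. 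Where you go beyond the paper is in exploiting the specific quartic potential of \cref{eqn:free_energy}: since $\psi'''(\phi) = 6\phi$ is affine, your computation shows the remainder equals $\frac{1}{24}\psi'''(\tphi^k)\left(\phi^{k+1}-\phi^k\right)^3$ exactly, i.e.\ the lemma holds with the explicit choice $\lambda(\vec{x}) = \tphi^k(\vec{x})$. This buys two things the abstract version does not: it disposes of the measurability of $\vec{x} \mapsto \lambda(\vec{x})$ (a point the paper passes over silently, and which the mean-value form of the remainder would otherwise require an argument for before the final pairing can be read as an integral), and it makes the correction term explicit --- $\lambda$ is a known $H^1$ function rather than an unknown intermediate point. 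Since the lemma is only ever used downstream through the bound $P_{\text{max}}$ in \cref{lem:correction_estimate}, both forms serve equally well there, but your exact version is the sharper statement for this particular free energy.
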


%\begin{proposition}{Estimate of the correction} {correction_estimate}
\begin{lemma}{} {correction_estimate}
	The following estimate holds:
	\begin{align}
	\frac{1}{24} \left| \left( \d{^3\psi}{\phi^3}\Biggl|_{\phi=\lambda\left(\vec{x}\right)}, \left(\phi^{k+1}\left(\vec{x}\right) - \phi^k\left(\vec{x}\right)\right)^3 \right) \right| \le  \left( C_m \, L_{\text{max}}^3 \, \, P_{\text{max}} \right) \delta t^3,
	\label{eqn:estimate_corr_cl}
	\end{align}
	where 
	\begin{equation}
	\label{eqn:pmax}
	 P_{\text{max}} := \frac{1}{24} \max_{\vec{x} \in \Omega} 
	\, \left|  \d{^3\psi}{\phi^3}\Biggl|_{\phi=\lambda\left(\vec{x}\right)} \right|,
	\end{equation}
    $\lambda(x)$ is a value between $\phi^k(\vec{x})$ and $\phi^{k+1}(\vec{x})$,
	$L_{\text{max}}$ is a global maxima of Lipschitz constants for $\phi$ as a function of time, and $C_m$ is the volume of the physical domain:
	\[
	\left| \phi^{k+1}\left(\vec{x}\right) - \phi^k\left(\vec{x}\right) \right| \le L\left(\vec{x}\right) \, \delta t, \qquad
	C_m := \int_{\Omega} \mathrm{d}\vec{x},
	\qquad \text{and} \qquad L_{\text{max}} := \max_{\vec{x} \in \Omega} \left(L\left(\vec{x}\right)\right).
	\]
	In the current work, the free energy potential is given by \cref{eqn:free_energy}, which
	results in the following simplification of \cref{eqn:pmax}:
	\begin{equation}
	\begin{split}
	P_{\text{max}} = \frac{1}{4}  \max_{\vec{x} \in \Omega} 
	\, \left|  \lambda\left(\vec{x}\right) \right| \le \frac{1}{4} \max_{\vec{x} \in \Omega} \max
	\left\{ \left| \phi^k\left(\vec{x}\right) \right|, \, \left| \phi^{k+1}\left(\vec{x}\right) \right| \right\}.
	\end{split}
	\label{eqn:estimate_corr_cl_simplified}
	\end{equation}
	%\label{claim:correction_estimate}
%	Here, $L_{\text{max}} = \max_{\vec{x} \in \Omega} \left(L\left(\vec{x}\right)\right)$.
\end{lemma}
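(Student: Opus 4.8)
The plan is to reduce the inner product to an integral over $\Omega$ and then apply the triangle inequality together with the definitions of $P_{\text{max}}$, $L_{\text{max}}$, and $C_m$. First I would write the left-hand side explicitly using the $L^2$ inner product as
\[
\frac{1}{24}\left| \int_{\Omega} \d{^3\psi}{\phi^3}\Big|_{\phi = \lambda(\vec{x})} \left(\phi^{k+1}(\vec{x}) - \phi^k(\vec{x})\right)^3 \, \mathrm{d}\vec{x} \right|,
\]
and move the absolute value inside the integral, so that the integrand becomes the nonnegative quantity $\tfrac{1}{24}\,\bigl|\psi'''(\lambda(\vec{x}))\bigr|\,\bigl|\phi^{k+1}(\vec{x}) - \phi^k(\vec{x})\bigr|^3$.

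Next I would bound each factor by a constant independent of $\vec{x}$. The factor $\tfrac{1}{24}\bigl|\psi'''(\lambda(\vec{x}))\bigr|$ is at most $P_{\text{max}}$ directly by its definition in~\cref{eqn:pmax}. For the cubic factor I would invoke the Lipschitz-in-time bound $\bigl|\phi^{k+1}(\vec{x}) - \phi^k(\vec{x})\bigr| \le L(\vec{x})\,\delta t \le L_{\text{max}}\,\delta t$, which upon cubing gives $\bigl|\phi^{k+1}(\vec{x}) - \phi^k(\vec{x})\bigr|^3 \le L_{\text{max}}^3 \, \delta t^3$. Since both bounds are now constant, the remaining integral $\int_\Omega \mathrm{d}\vec{x}$ collapses to the domain volume $C_m$, yielding the stated bound $P_{\text{max}} \, L_{\text{max}}^3 \, C_m \, \delta t^3$.

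Finally, for the simplification in~\cref{eqn:estimate_corr_cl_simplified} I would specialize to the polynomial free energy of~\cref{eqn:free_energy}, for which $\psi'''(\phi) = 6\phi$. Substituting into~\cref{eqn:pmax} gives $P_{\text{max}} = \tfrac{6}{24}\max_{\vec{x}} \bigl|\lambda(\vec{x})\bigr| = \tfrac{1}{4}\max_{\vec{x}}\bigl|\lambda(\vec{x})\bigr|$, and since $\lambda(\vec{x})$ lies between $\phi^k(\vec{x})$ and $\phi^{k+1}(\vec{x})$, the absolute value (being convex, hence maximized at an endpoint of that interval) satisfies $\bigl|\lambda(\vec{x})\bigr| \le \max\{|\phi^k(\vec{x})|, |\phi^{k+1}(\vec{x})|\}$; taking the maximum over $\vec{x}$ gives the claim.

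The estimation itself is a routine chain of triangle-inequality and supremum bounds, so I do not expect a genuine obstacle in the algebra. The one point that genuinely requires care is the Lipschitz-in-time hypothesis $\bigl|\phi^{k+1} - \phi^k\bigr| \le L(\vec{x})\,\delta t$ and the finiteness of $L_{\text{max}}$: this is a regularity assumption on the discrete phase field's time evolution rather than a consequence of the variational form alone, and the entire $\delta t^3$ scaling of the correction term hinges on it. I would therefore state it explicitly as a standing hypothesis and emphasize that it is precisely what permits the cubic correction to be absorbed as a higher-order perturbation in the subsequent energy-stability argument.
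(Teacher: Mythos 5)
Your proof is correct and is precisely the intended argument: the paper recalls this lemma from \citet{Khanwale2020} without reproducing its proof, and the routine chain you describe --- moving the absolute value inside the integral, bounding $\tfrac{1}{24}\left|\psi'''(\lambda(\vec{x}))\right|$ by $P_{\text{max}}$ and $\left|\phi^{k+1}-\phi^k\right|^3$ by $L_{\text{max}}^3\,\delta t^3$, integrating to pick up $C_m$, and then specializing $\psi'''(\phi)=6\phi$ for the polynomial potential --- is exactly the estimate being invoked. Your closing point that the Lipschitz-in-time bound is a standing regularity hypothesis rather than a consequence of the variational form also matches the paper's treatment, which states that bound as an assumption inside the lemma and discusses the boundedness of $\phi^k,\phi^{k+1}$ only in the remark that follows.
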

\begin{remark}
    While there is no rigorous maximum principle for CHNS that guarantees that $|\phi^k|,|\phi^{k+1}| \le 1$, and hence $P_{\text{max}} \le 0.25$, there is ample computational evidence to suggest that $|\phi^k|,
    |\phi^{k+1}| \lessapprox 1$, at least for initial and boundary data that is of interest in many applications. Hence, in practice we find that $P_{\text{max}} \lessapprox 0.25$. It is important to note that we don't actually need $\phi$ to be bounded by 1 for Lemma 3.  We only need that $\phi^k$ and $\phi^{k+1}$ 
    are each bounded by {\bf some finite constant} over the spatial domain: ${\vec{x} \in \Omega}$.
	
	\textcolor{black}{As $\phi^k$ and $\phi^{k+1}$ are solutions to the Cahn-Hilliard equation, we have proved their continuity in the context of existence in~\citet{Khanwale2020} similar to ~\citet{Han2015}. In \citet{Khanwale2020,Han2015} the analysis is limited to proving $\phi$ is continuous and bounded (requirements for existence), in addition, there are several works in the literature that analyze the precise regularity of $\phi$ beyond just continuity~\citep{Conti2018,Giorgini2019}.  Therefore, if $\phi^k$ and $\phi^{k+1}$ are continuous functions on a bounded domain $\Omega$.  Continuous functions achieve bounds on a bounded domain.\todo[color=cyan]{R1: \#1.1}}
    %, which means that the right-hand side of \cref{eqn:estimate_corr_cl_simplified} will remain bounded in practice.
%	It is important to note that the $L^\infty\left(\Omega\right)$ norm is over the domain and is bounded.  For example for the free energy potential~(\cref{eqn:free_energy}) used in this paper $\frac{\psi^{\mathrm{\prime\prime\prime}}(\lambda\left(\vec{x}\right))}{24}$ is $\frac{\lambda\left(\vec{x}\right)}{4}$ and its $L^\infty\left(\Omega\right)$ norm is bounded.
\end{remark}

A major difference between the time integration schemes in~\citep{ Khanwale2020} and this work is the evaluation of mixture density and viscosity at the average of $\phi$ ($\tphi^k$). 

\begin{remark}
	For this proof we add 
	\begin{equation}
	\frac{1}{2} \left(\frac{\rho\left(\phi^{k+1}\right) - \rho\left(\phi^{k}\right)}{\delta t}\tvi^k\, , w_i\right) 
	+ \frac{1}{2}\left(\pd{\left(\trhok \tvj^k \right)}{x_j}\tvi^k\, , w_i\right)
	+ \frac{1}{2}\left(\frac{1}{Pe} \pd{\tJj^k}{x_j}\tvi^k\, , w_i\right) = 0
	\label{eqn:consv_added}
	\end{equation}
	to \cref{eqn:nav_stokes_var_semi_disc}.  This is equivalent to adding zero to the equation on the time-discrete continuous-space setting.
	\label{rem:consv_add}
\end{remark}

To analyze the scheme accordingly, we present the following results. 
\begin{lemma}{}{evolution_term_estimate}
	The variational temporal term from the Cahn-Hilliard contribution in the momentum equation, 
	\cref{eqn:nav_stokes_var_semi_disc}, can be written as follows:  
	\begin{equation}
	\begin{split}
	\left(\trhok \left(v^{k+1}_i - v^k_i\right), \tvi^k\right) 
	& + \frac{1}{2} \left(\left(\rho\left(\phi^{k+1}\right) - \rho\left(\phi^{k}\right)\right)\tvi^k, \tvi^k\right) = \\
	&\quad\quad\quad\quad\quad\quad\quad\quad  
	\frac{1}{2}\int_{\Omega}\left[ \rho\left(\phi^{k+1}\right) \norm{{\vec{v}}^{k+1}}^2
	 - \rho\left(\phi^{k}\right)\norm{\vec{v}^{k}}^2 \right]  \, d\vec{x} \\
	&\quad\quad\quad\quad\quad\quad\quad
	- \frac{1}{8}\int_{\Omega}\left[ \rho\left(\phi^{k+1}\right) - \rho\left(\phi^{k}\right) \right] \norm{\vec{v}^{k+1} - \vec{v}^{k} }^2 \, d\vec{x}
	\end{split}
	\label{eqn:evolution_term_lemma}
	\end{equation}
	$\forall \;\; \tphi^k$, $\phi^{k}$, $\phi^{k+1} \in  H^1(\Omega)$, and $\forall \;\; {\vec{v}}^k, {\vec{v}}^{k+1} \in \vec{H}_{0}^1(\Omega)$, where  ${\vec{v}}^k, {\vec{v}}^{k+1}, \phi^k, \phi^{k+1}$ satisfy \cref{eqn:nav_stokes_var_semi_disc} -- \cref{eqn:phi_eqn_var_semi_disc},
	and
	\begin{equation}
	\begin{split}
	%\norm{\vec{v}^{k}}^2_{L^2} &:= \int_{\Omega} \sum_i \abs{v_i}^2 \, d\vec{x}, \qquad 
	\norm{\vec{v}}^2 &:= \sum_i \abs{v_i}^2.
	\end{split}
	\end{equation}
	%\label{lem:evol_estimate}
\end{lemma}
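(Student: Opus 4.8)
The plan is to reduce the statement to an exact, pointwise algebraic rearrangement, exploiting the fact that the non-dimensional density $\rho(\phi) = \alpha\phi + \beta$ from \cref{eqn:alpha_beta_defn} is \emph{affine} in $\phi$. The single observation that drives everything is that, because $\rho$ is affine, evaluating it at the midpoint $\tphi^k = (\phi^{k+1}+\phi^k)/2$ coincides with averaging the endpoint values:
\begin{equation}
\trhok = \rho\!\left(\tphi^k\right) = \rho\!\left(\frac{\phi^{k+1}+\phi^k}{2}\right) = \frac{\rho\!\left(\phi^{k+1}\right) + \rho\!\left(\phi^{k}\right)}{2}.
\end{equation}
This is exactly the structural feature highlighted in the discussion preceding \cref{rem:consv_add}, and it is the only place where the form of $\rho$ enters; were $\rho$ nonlinear in $\phi$, this identity would pick up extra terms and the clean result would fail.

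Next I would treat the two inner products on the left-hand side separately and purely pointwise, with the spatial integration and the implicit sum over $i$ simply riding along. For the first term I would use the componentwise identity $(v_i^{k+1}-v_i^k)\,\tvi^k = \tfrac12\big(\norm{\vec{v}^{k+1}}^2 - \norm{\vec{v}^{k}}^2\big)$, i.e.\ $(a-b)(a+b)/2 = (a^2-b^2)/2$ summed over components, giving
\begin{equation}
\left(\trhok\,(v_i^{k+1}-v_i^k),\,\tvi^k\right) = \frac12 \int_\Omega \trhok \left(\norm{\vec{v}^{k+1}}^2 - \norm{\vec{v}^{k}}^2\right) \mathrm{d}\vec{x}.
\end{equation}
For the second term I would expand the midpoint norm with the parallelogram-type identity $\norm{\tvecv^k}^2 = \tfrac12\norm{\vec{v}^{k+1}}^2 + \tfrac12\norm{\vec{v}^{k}}^2 - \tfrac14\norm{\vec{v}^{k+1}-\vec{v}^{k}}^2$, which is precisely what produces the $-\tfrac18$ fluctuation contribution on the claimed right-hand side.

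Finally I would substitute $\trhok = \tfrac12\big(\rho(\phi^{k+1})+\rho(\phi^{k})\big)$ into the first term, add the two expanded contributions, and collect over the four monomials $\rho(\phi^{k+1})\norm{\vec{v}^{k+1}}^2$, $\rho(\phi^{k})\norm{\vec{v}^{k}}^2$, and the cross terms $\rho(\phi^{k+1})\norm{\vec{v}^{k}}^2$ and $\rho(\phi^{k})\norm{\vec{v}^{k+1}}^2$. The two cross terms cancel, the diagonal terms combine to $\tfrac12\int_\Omega\big(\rho(\phi^{k+1})\norm{\vec{v}^{k+1}}^2 - \rho(\phi^{k})\norm{\vec{v}^{k}}^2\big)\,\mathrm{d}\vec{x}$, and the $-\tfrac18$ term is carried through unchanged, yielding exactly \cref{eqn:evolution_term_lemma}. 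There is no genuine analytic obstacle here: the lemma is an identity, and the only substantive step is the affine-average property of $\rho$. The main thing to be careful about is the arithmetic bookkeeping of the four cross terms, since getting the cancellation and the factor $\tfrac18$ right is the one place where a stray factor of two could silently break the identity.
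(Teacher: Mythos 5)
Your proof is correct and follows essentially the same route as the paper's: both arguments hinge on the affine property $\trhok = \tfrac{1}{2}\left(\rho\left(\phi^{k+1}\right)+\rho\left(\phi^{k}\right)\right)$, the difference-of-squares identity $\left(v_i^{k+1}-v_i^k\right)\tvi^k = \tfrac{1}{2}\left(\norm{\vec{v}^{k+1}}^2-\norm{\vec{v}^{k}}^2\right)$, and a completion of the square to produce the $-\tfrac{1}{8}\norm{\vec{v}^{k+1}-\vec{v}^{k}}^2$ term. The only difference is bookkeeping: you package the square completion up front via the midpoint-norm (parallelogram) identity for $\norm{\tvecv^k}^2$, whereas the paper expands $\tvi^k\tvi^k$, inserts judicious zero terms, and completes the square at the end --- the underlying algebra is identical.
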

\begin{proof} {} {}
We start with the left-hand side of \cref{eqn:evolution_term_lemma}:
	\begin{equation}
	\begin{split}
	 \left(\trhok \left(v^{k+1}_i - v^k_i\right), \tvi^k\right) &= 
	  \frac{1}{2}\int_{\Omega}\left[ \trhok \norm{{\vec{v}}^{k+1}}^2 - \trhok \norm{\vec{v}^{k}}^2 \right]  \, d\vec{x} \\
       &=\frac{1}{4}\int_{\Omega}\left[ \rho\left(\phi^{k+1}\right)\norm{{\vec{v}}^{k+1}}^2 - \rho\left(\phi^{k+1}\right)\norm{\vec{v}^{k}}^2 \right]  \, d\vec{x}\\
	 &+ \frac{1}{4}\int_{\Omega}\left[ \rho\left(\phi^{k}\right)\norm{{\vec{v}}^{k+1}}^2 - \rho\left(\phi^{k}\right)\norm{\vec{v}^{k}}^2 \right]  \, d\vec{x},
	\end{split}
	\label{eqn:evol_inner_prod}
	\end{equation}
	where use the definitions~\cref{eqn:averaging1} and~\cref{eqn:psi_ave_def} and the fact that $\rho$ is an affine function of $\phi$. Continuing the algebraic manipulations we obtain:
	\begin{align}
	\begin{split}
	\label{eqn:evol_inner_prod1}
	\left(\trhok \left(v^{k+1}_i - v^k_i\right), \tvi^k\right) &= 
	 \frac{1}{4}\int_{\Omega}\left[ \rho\left(\phi^{k+1}\right)\norm{{\vec{v}}^{k+1}}^2 - \rho\left(\phi^{k+1}\right)\norm{\vec{v}^{k}}^2 \right]  \, d\vec{x}\\
	&+ \frac{1}{4}\int_{\Omega}\left[ \rho\left(\phi^{k}\right)\norm{{\vec{v}}^{k+1}}^2 - \rho\left(\phi^{k}\right)\norm{\vec{v}^{k}}^2 \right]  \, d\vec{x}\\
	&+ \frac{1}{4}\int_{\Omega}\left[ \rho\left(\phi^{k}\right)\norm{{\vec{v}}^{k}}^2 - \rho\left(\phi^{k}\right)\norm{\vec{v}^{k}}^2 \right]  \, d\vec{x}\\
	&+ \frac{1}{4}\int_{\Omega}\left[ \rho\left(\phi^{k+1}\right)\norm{{\vec{v}}^{k+1}}^2 - \rho\left(\phi^{k+1}\right)\norm{\vec{v}^{k+1}}^2 \right]  \, d\vec{x},
	\end{split}\\
	\begin{split}
	\label{eqn:evol_inner_prod2}
	\implies \left(\trhok \left(v^{k+1}_i - v^k_i\right), \tvi^k\right) &=   
	\frac{1}{2}\int_{\Omega}\left[ \rho\left(\phi^{k+1}\right)\norm{{\vec{v}}^{k+1}}^2 - \rho\left(\phi^{k}\right)\norm{\vec{v}^{k}}^2 \right]  \, d\vec{x}\\
	&- \frac{1}{4}\int_{\Omega}\left[ \rho\left(\phi^{k+1}\right) - \rho\left(\phi^{k}\right) \right] \norm{\vec{v}^{k}}^2 \, d\vec{x}\\
	&- \frac{1}{4}\int_{\Omega}\left[ \rho\left(\phi^{k+1}\right) - \rho\left(\phi^{k}\right) \right] \norm{\vec{v}^{k+1}}^2 \, d\vec{x}.
	\end{split}
	\end{align}
Here we added $\frac{1}{4}\int_{\Omega}\left[ \rho\left(\phi^{k}\right)\norm{{\vec{v}}^{k}}^2 - \rho\left(\phi^{k}\right)\norm{\vec{v}^{k}}^2 \right]  \, d\vec{x}$ and $\frac{1}{4}\int_{\Omega}\left[ \rho\left(\phi^{k+1}\right)\norm{{\vec{v}}^{k+1}}^2 - \rho\left(\phi^{k+1}\right)\norm{\vec{v}^{k+1}}^2 \right]  \, d\vec{x}$, both of which are zero.

Now adding the second term,
\begin{align}
\begin{split}
\label{eqn:evol_inner_prod1_secadded}
\left(\trhok \left(v^{k+1}_i - v^k_i\right), \tvi^k\right)
&+ \frac{1}{2} \left(\left(\rho\left(\phi^{k+1}\right) - \rho\left(\phi^{k}\right)\right)\tvi^k, \tvi^k\right) 
= \\
&\quad\quad\quad\quad\quad\quad\quad\quad\frac{1}{2}\int_{\Omega}\left[ \rho\left(\phi^{k+1}\right)\norm{{\vec{v}}^{k+1}}^2 - \rho\left(\phi^{k}\right)\norm{\vec{v}^{k}}^2 \right]  \, d\vec{x}\\
&\quad\quad\quad\quad\quad\quad\quad- \frac{1}{4}\int_{\Omega}\left[ \rho\left(\phi^{k+1}\right) - \rho\left(\phi^{k}\right) \right] \norm{\vec{v}^{k}}^2 \, d\vec{x}\\
&\quad\quad\quad\quad\quad\quad\quad- \frac{1}{4}\int_{\Omega}\left[ \rho\left(\phi^{k+1}\right) - \rho\left(\phi^{k}\right) \right] \norm{\vec{v}^{k+1}}^2 \, d\vec{x}\\
&\quad\quad\quad\quad\quad\quad\quad+ \frac{1}{8}\int_{\Omega}\left[ \rho\left(\phi^{k+1}\right) - \rho\left(\phi^{k}\right) \right] \norm{\vec{v}^{k+1}}^2 \, d\vec{x}\\
&\quad\quad\quad\quad\quad\quad\quad+ \frac{1}{8}\int_{\Omega}\left[ \rho\left(\phi^{k+1}\right) - \rho\left(\phi^{k}\right) \right] \norm{\vec{v}^{k}}^2 \, d\vec{x}\\
&\quad\quad\quad\quad\quad\quad\quad+ \frac{1}{8}\int_{\Omega}\left[ \rho\left(\phi^{k+1}\right) - \rho\left(\phi^{k}\right) \right] 2 v_i^{k+1}v_i^{k} \, d\vec{x},\\
\end{split}
\end{align}
Here to expand $\tvi^{k} \tvi^{k}$, we utilized the identity $a^2 + b^2 + 2ab = (a + b)^2$, where $a = v_i^{k+1}$ and $b = v_i^{k}$; also recall that $\tvi^{k} = \left(v_i^{k+1} + v_i^{k}\right)/2$. After cancellations and simplification, 
\begin{align}
\begin{split}
\label{eqn:evol_inner_prod1_secadded2}
\left(\trhok \left(v^{k+1}_i - v^k_i\right), \tvi^k\right)
&+ \frac{1}{2} \left(\left(\rho\left(\phi^{k+1}\right) - \rho\left(\phi^{k}\right)\right)\tvi^k, \tvi^k\right) 
= \\
&\quad\quad\quad\quad\quad\quad\quad\quad\frac{1}{2}\int_{\Omega}\left[ \rho\left(\phi^{k+1}\right)\norm{{\vec{v}}^{k+1}}^2 - \rho\left(\phi^{k}\right)\norm{\vec{v}^{k}}^2 \right]  \, d\vec{x}\\
&\quad\quad\quad\quad\quad\quad\quad- \frac{1}{8}\int_{\Omega}\left[ \rho\left(\phi^{k+1}\right) - \rho\left(\phi^{k}\right) \right] \norm{\vec{v}^{k+1}}^2 \, d\vec{x}\\
&\quad\quad\quad\quad\quad\quad\quad- \frac{1}{8}\int_{\Omega}\left[ \rho\left(\phi^{k+1}\right) - \rho\left(\phi^{k}\right) \right] \norm{\vec{v}^{k}}^2 \, d\vec{x}\\
&\quad\quad\quad\quad\quad\quad\quad+ \frac{1}{8}\int_{\Omega}\left[ \rho\left(\phi^{k+1}\right) - \rho\left(\phi^{k}\right) \right] 2 v_i^{k+1}v_i^{k} \, d\vec{x},\\
\end{split}
\end{align}
Now, for the last three terms with the coefficient of $1/8$, we again utilize the identity $a^2 + b^2 - 2ab = (a - b)^2$ to complete the square, where $a = v_i^{k+1}$ and $b = v_i^{k}$. Performing this step we get,
\begin{align}
\begin{split}
\label{eqn:evol_inner_prod1_secadded3}
\left(\trhok \left(v^{k+1}_i - v^k_i\right), \tvi^k\right)
&+ \frac{1}{2} \left(\left(\rho\left(\phi^{k+1}\right) - \rho\left(\phi^{k}\right)\right)\tvi^k, \tvi^k\right) 
= \\
&\quad\quad\quad\quad\quad\quad\quad\quad\frac{1}{2}\int_{\Omega}\left[ \rho\left(\phi^{k+1}\right)\norm{{\vec{v}}^{k+1}}^2 - \rho\left(\phi^{k}\right)\norm{\vec{v}^{k}}^2 \right]  \, d\vec{x}\\
&\quad\quad\quad\quad\quad\quad\quad- \frac{1}{8}\int_{\Omega}\left[ \rho\left(\phi^{k+1}\right) - \rho\left(\phi^{k}\right) \right] \norm{\vec{v}^{k+1} - \vec{v}^{k} }^2 \, d\vec{x},\\
\end{split}
\end{align}
as desired.
\end{proof}

Recognizing that density is an affine transform of $\phi$: $\rho\left(\phi\right) = \alpha \phi + \beta$. Further, using Lipschitz continuity for $\phi$ and velocity we can obtain the following bound. 
\begin{lemma}{} {temporal_correction_estimate}
	The following estimate holds:
	\begin{align}
	\left| 
	\frac{1}{8}\int_{\Omega}\left[ \rho\left(\phi^{k+1}\right) - \rho\left(\phi^{k}\right) \right] \norm{\vec{v}^{k+1} - \vec{v}^{k} }^2 \, d\vec{x} \right| 
	\le \frac{1}{8}C_m \, \alpha L_{\phi,\text{max}} \, L_{\vec{v},\text{max}}^2 \delta t^3,
	\label{eqn:temp_estimate_corr_cl}
	\end{align}
	where $L_{\phi,\text{max}}$, $L_{\vec{v},\text{max}}$ are global maxima of Lipschitz constants and $C_m$ is the volume of the physical domain:
	\begin{gather*}
	C_m := \int_{\Omega} \mathrm{d}\vec{x},
	\,\,\,  L_{\phi,\text{max}} := \max_{\vec{x} \in \Omega} \left(L_{\phi}\left(\vec{x}\right)\right), \,\,\, L_{\vec{v},\text{max}} := \max_{\vec{x} \in \Omega} \left(L_{\vec{v}}\left(\vec{x}\right)\right), \,\,\,
	\left| \phi^{k+1}\left(\vec{x}\right) - \phi^k\left(\vec{x}\right) \right| \le L_{\phi}\left(\vec{x}\right) \delta t, \\
	\left| \rho\left(\phi^{k+1}\right) - \rho\left(\phi^{k}\right) \right| \le \alpha L_{\phi}\left(\vec{x}\right) \, \delta t, \,\,\, \text{and} \,\,\,
	\norm{\vec{v\left(\vec{x}\right)}^{k+1} - \vec{v\left(\vec{x}\right)}^{k} } \le L_{\vec{v}}\left(\vec{x}\right) \, \delta t,
	\end{gather*}
	and $\alpha$ is defined by \cref{eqn:alpha_beta_defn}.
	%\label{claim:correction_estimate}
	%	Here, $L_{\text{max}} = \max_{\vec{x} \in \Omega} \left(L\left(\vec{x}\right)\right)$.
\end{lemma}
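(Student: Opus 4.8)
The plan is to establish the inequality by a direct pointwise estimate of the integrand, followed by integration over the bounded domain $\Omega$. First I would apply the triangle inequality for integrals to bring the absolute value inside the integral, using that the velocity term is non-negative:
\[
\abs{\frac{1}{8}\int_{\Omega}\left[ \rho\left(\phi^{k+1}\right) - \rho\left(\phi^{k}\right) \right] \norm{\vec{v}^{k+1} - \vec{v}^{k}}^2 \, d\vec{x}} \le \frac{1}{8}\int_{\Omega} \abs{\rho\left(\phi^{k+1}\right) - \rho\left(\phi^{k}\right)} \, \norm{\vec{v}^{k+1} - \vec{v}^{k}}^2 \, d\vec{x}.
\]

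Next I would exploit the affine structure $\rho(\phi) = \alpha\phi + \beta$ to write $\rho(\phi^{k+1}) - \rho(\phi^{k}) = \alpha(\phi^{k+1} - \phi^{k})$, so that the density difference is controlled pointwise by the $\phi$-Lipschitz hypothesis, $\abs{\rho(\phi^{k+1}) - \rho(\phi^{k})} \le \alpha L_{\phi}(\vec{x})\,\delta t$, while the velocity factor is controlled by the squared velocity-Lipschitz hypothesis, $\norm{\vec{v}^{k+1} - \vec{v}^{k}}^2 \le L_{\vec{v}}(\vec{x})^2\,\delta t^2$. Substituting both pointwise bounds into the integrand produces the factor $\delta t^3$ and leaves the spatially varying product $\alpha L_{\phi}(\vec{x}) \, L_{\vec{v}}(\vec{x})^2$ under the integral.

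I would then replace each spatially varying Lipschitz constant by its global maximum, $L_{\phi}(\vec{x}) \le L_{\phi,\text{max}}$ and $L_{\vec{v}}(\vec{x}) \le L_{\vec{v},\text{max}}$, which renders the integrand a constant. Pulling this constant outside the integral leaves $\int_{\Omega} d\vec{x} = C_m$, yielding exactly the claimed bound $\tfrac{1}{8}\,C_m\,\alpha\, L_{\phi,\text{max}}\,L_{\vec{v},\text{max}}^2\,\delta t^3$.

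The argument is essentially routine, so I do not anticipate a genuine obstacle; the only point meriting care is the sign of $\alpha$. Since $\alpha = (\rho_+ - \rho_-)/(2\rho_+)$ can be negative when the ``$-$'' phase is the denser one, the clean bound written with $\alpha$ rather than $\abs{\alpha}$ tacitly relies on the convention $\rho_+ \ge \rho_-$, i.e. $\alpha \ge 0$; this is precisely the assumption already encoded in the lemma's hypothesis $\abs{\rho(\phi^{k+1}) - \rho(\phi^{k})} \le \alpha L_{\phi}(\vec{x})\,\delta t$. Finiteness of $L_{\phi,\text{max}}$ and $L_{\vec{v},\text{max}}$, needed so that the right-hand side is a genuine finite bound, is guaranteed by the continuity and boundedness of $\phi$ and $\vec{v}$ on the bounded domain $\Omega$, as discussed in the preceding remark.
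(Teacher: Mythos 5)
Your proof is correct and is exactly the argument the paper intends: the paper offers no separate proof of this lemma, merely noting beforehand that the bound follows from the affine structure $\rho(\phi) = \alpha\phi + \beta$ together with the Lipschitz hypotheses, which is precisely the pointwise-estimate-then-integrate chain you carry out. Your closing observation about the sign convention $\alpha \ge 0$ (i.e., $\rho_+ \ge \rho_-$) being tacitly built into the hypothesis $\abs{\rho(\phi^{k+1}) - \rho(\phi^{k})} \le \alpha L_{\phi}(\vec{x})\,\delta t$ is a fair and accurate reading of the paper's statement.
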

\begin{lemma}{}{advection_estimate}
The variational advection term from the Cahn-Hilliard contribution in the momentum equation,~\cref{eqn:nav_stokes_var_semi_disc}, becomes:  
	\begin{equation}
	\begin{split}
	\frac{\delta t}{WeCn}\left(\tphi^k\tvi^k,\pd{\tmu^{k}}{x_i}\right) & = 
	-\frac{1}{2}\int_{\Omega}\left[ \rho\left(\phi^{k+1}\right)\norm{{\vec{v}}^{k+1}}^2 - \rho\left(\phi^{k}\right)\norm{\vec{v}^{k}}^2 \right]  \, d\vec{x}  \\
	&+ \frac{1}{8}\int_{\Omega}\left[ \rho\left(\phi^{k+1}\right) - \rho\left(\phi^{k}\right) \right] \norm{\vec{v}^{k+1} - \vec{v}^{k} }^2 \, d\vec{x}\\
	&- \frac{ \delta t}{Re} \norm{\sqrt{\tetak} \, \nabla \widetilde{\vec{v}}^k}_{L^2}^2  
	 - \frac{1}{Fr}\left(y,  \,  \rho\left(\phi^{k+1}\right) - \rho\left(\phi^{k}\right)  \right),
	\end{split}
	\label{eqn:cahn_hilliard_term_lemma}
	\end{equation}
	$\forall \;\; \tphi^k$, $\phi^{k+1}$, $\tmu^{k} \in  H^1(\Omega)$, and $\forall \;\; {\vec{v}}^k, {\vec{v}}^{k+1} \in \vec{H}_{0}^1(\Omega)$, where  ${\vec{v}}^k, {\vec{v}}^{k+1}, p^k, p^{k+1}, \phi^k, \phi^{k+1}, \mu^{k},\mu^{k+1}$ satisfy \cref{eqn:nav_stokes_var_semi_disc} -- \cref{eqn:phi_eqn_var_semi_disc},
	and
	\begin{equation}
	\begin{split}
	    %\norm{\vec{v}^{k}}^2_{L^2} &:= \int_{\Omega} \sum_i \abs{v_i}^2 \, d\vec{x}, \qquad 
	    \norm{\vec{v}}^2 &:= \sum_i \abs{v_i}^2, \\
	    \qquad \norm{\sqrt{\tetak} \, \nabla \widetilde{\vec{v}}^k}_{L^2}^2 &:= 
	\int_{\Omega} \sqrt{\tetak} \, \sum_i \sum_j \abs{\frac{\partial \tvi^k}{\partial x_j}}^2 \, d\vec{x} = 
	\int_{\Omega} \sqrt{\tetak} \, \norm{\nabla\vec{v}}^2_F \, d\vec{x}.
	\end{split}
	\end{equation}
	%\label{lem:advection_estimate}
\end{lemma}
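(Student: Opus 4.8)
The plan is to derive \cref{eqn:cahn_hilliard_term_lemma} by testing the discrete momentum equation with a specific velocity and then identifying every resulting term. Concretely, I take the variational momentum equation \cref{eqn:nav_stokes_var_semi_disc}, add to it the three identically-zero terms of \cref{rem:consv_add}, and choose the test function $w_i = \delta t\,\tvi^k$; this is admissible because $\tvecv^k = (\vec v^{k+1}+\vec v^k)/2 \in \vec H^1_0(\Omega)$. The augmented equation still equals zero, so after rewriting all terms except the Cahn--Hilliard forcing contribution, solving for that contribution will reproduce the stated right-hand side.

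I would then dispatch the individual terms in turn. The forcing term $-\tfrac{Cn}{We}(\partial_j w_i,\,\partial_i\tphi^k\,\partial_j\tphi^k)$ becomes, after one integration by parts (boundary term zero since $\tvecv^k=\vec 0$ on $\partial\Omega$) and an application of \cref{lem:forcing_equivalent}, exactly the left-hand side $\tfrac{\delta t}{WeCn}(\tphi^k\tvi^k,\partial_i\tmu^k)$. The discrete time-derivative term together with the first term of \cref{rem:consv_add} is precisely the left-hand side of \cref{lem:evolution_term_estimate}, supplying the first two lines of the right-hand side. The convective term $\delta t(\trhok\tvj^k\tvi^k,\partial_j\tvi^k)$ combines with the second term of \cref{rem:consv_add}: writing $\tvi^k\partial_j\tvi^k = \tfrac12\partial_j\norm{\tvecv^k}^2$ and integrating by parts (the boundary integral vanishes because $\tvecv^k=\vec 0$ on $\partial\Omega$) shows the two cancel exactly, and the two $Pe$-flux terms cancel by the identical manipulation with $\tJj^k$ replacing $\trhok\tvj^k$. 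The pressure term $-\tfrac{\delta t}{We}(\partial_i\tvi^k,\tp^k)$ vanishes by solenoidality \cref{eqn:cont_var_semi_disc} applied with $q=\tp^k$ at both time levels, and the viscous term is directly $\tfrac{\delta t}{Re}\norm{\sqrt{\tetak}\,\nabla\tvecv^k}_{L^2}^2$, giving the third line.

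The main obstacle is the gravity term, the only place where the unequal-density structure bites. With $\hat{\vec g}=(0,-1,0)$ it reads $-\tfrac{\delta t}{Fr}(\tvi^k,\trhok\hat{g}_i) = +\tfrac{\delta t}{Fr}\int_\Omega\trhok\,\widetilde{v}_2^{\,k}\,\mathrm{d}\vec{x}$, a density-weighted vertical-velocity integral that must be converted into the increment $(y,\rho(\phi^{k+1})-\rho(\phi^k))$. I would obtain this by testing the continuity equation \cref{eqn:cont_consv_var_semi_disc} with $q=y$: the transport term integrates by parts to $-\int_\Omega\trhok\,\widetilde{v}_2^{\,k}\,\mathrm{d}\vec{x}$ (boundary term zero since $\tvecv^k=\vec 0$), yielding $\int_\Omega\trhok\,\widetilde{v}_2^{\,k}\,\mathrm{d}\vec{x} = \tfrac1{\delta t}(y,\rho(\phi^{k+1})-\rho(\phi^k)) - \tfrac1{Pe}\int_\Omega\tJj\,\partial_j y\,\mathrm{d}\vec{x}$. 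The delicate point is the residual diffusive-flux term: using $\partial_j y = \delta_{j2}$ and $\tJi = \tfrac{(\rho_--\rho_+)}{2\rho_+Cn}\partial_i\tmu^k$, it reduces to the boundary integral $\tfrac{\alpha}{PeCn}\int_{\partial\Omega}\tmu^k\,\hat{n}_2\,\mathrm{d}S$, the net vertical boundary flux of $\tmu^k$, which is \emph{not} killed by the homogeneous no-flux condition $\partial_i\tmu^k\hat{n}_i=0$. This term is absent in the equal-density analysis of \citet{Khanwale2020} (there $\alpha=0$), which is precisely why that proof was restricted to equal densities; here I must argue it vanishes (or is controlled) under the boundary conditions and box geometry employed. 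Granting this, the gravity term furnishes the final line $-\tfrac1{Fr}(y,\rho(\phi^{k+1})-\rho(\phi^k))$, and collecting (evolution) $+\,0\,+\,0\,+$ (forcing) $+\,0\,+$ (viscous) $+$ (gravity) $= 0$ and solving for the forcing term gives \cref{eqn:cahn_hilliard_term_lemma}.
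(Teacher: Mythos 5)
Your proof follows essentially the same path as the paper's: test the momentum equation augmented by \cref{rem:consv_add} with $w_i=\delta t\,\tvi^k$, cancel the convective and mass-flux terms via the trilinear-form identities (\cref{eqn:trilinear_zero_vel}, \cref{eqn:trilinear_zero_massflux}), apply \cref{lem:evolution_term_estimate} and \cref{lem:forcing_equivalent}, eliminate the pressure by solenoidality, and convert the gravity term into the density increment using the continuity/Cahn--Hilliard structure (your use of \cref{eqn:cont_consv_var_semi_disc} with $q=y$ is equivalent to the paper's strong-form substitution). The residual boundary flux $\propto\int_{\partial\Omega}\tmu^{k}\,\hat{n}_{2}\,\mathrm{d}S$ that you flag as the delicate point is exactly the term the paper encounters in \cref{eqn:grav_stuff} and disposes of by explicit assumption (its remark: the term vanishes provided there is no three-phase contact line on any boundary where $\hat{n}_i\hat{g}_i\neq 0$), so your argument is complete under the same hypothesis the paper invokes.
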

\begin{proof} {}{}
The structure of this proof is similar to one presented in~\citet{Khanwale2020} with some changes.  However, for completeness we produce the  proof below. 
We start with momentum equation \cref{eqn:nav_stokes_var_semi_disc} added with \cref{eqn:consv_added} from \cref{rem:consv_add} using the test function $w_i=\delta t \, \tvi^k$:
	\begin{equation}
	\begin{split}
	&\left(\trhok \frac{v^{k+1}_i - v^k_i}{\delta t}, \delta t \, \tvi^k\right) + \left(\trhok \, \tvj^{k} \, \pd{\tvi^{k}}{x_j}, \delta t \, \tvi^k\right) + 
	\frac{1}{Pe}\left(\tJj^{k}\pd{\tvi^{k}}{x_j}, \delta t \, \tvi^k\right)\\
	&\frac{1}{2} \left(\frac{\rho\left(\phi^{k+1}\right) - \rho\left(\phi^{k}\right)}{\delta t}\tvi^k\,, \delta t \, \tvi^k\right) 
	+ \frac{1}{2}\left(\pd{\left(\trhok \tvj^k \right)}{x_j}\tvi^k\,, \delta t \, \tvi^k\right)
	+ \frac{1}{2}\left(\frac{1}{Pe} \pd{\tJj^k}{x_j}\tvi^k\,, \delta t \, \tvi^k\right)
	\\ + &\frac{Cn}{We}\left( \pd{}{x_j}\left({\pd{\tphi^{k}}{x_i} \, \pd{\tphi^{k}}{x_j}}\right), \delta t  \,\tvi^k\right) + 
	\frac{1}{We}\left(\pd{\tp^{k}}{x_i}, \delta t \, \tvi^k\right) - \frac{1}{Re}\left(\pd{}{x_j}\left(\tetak {\pd{\tvi^{k}}{x_j}}\right), \delta t \, \tvi^k\right)
	\\- &\frac{1}{Fr}\left(\trhok \hat{g_i}, \delta t \, \tvi^k\right) = 0. 
	\label{eqn:disc_nav_stokes_inner_prod}
	\end{split}
	\end{equation}
	The second and third terms together with fifth and sixth terms are in a trilinear form so from~\cref{eqn:trilinear_zero_vel} and~\cref{eqn:trilinear_zero_massflux} they go to zero and we have:
	\begin{align}
%	\begin{split}
%	& \quad \left(\rho\left(\tphi^{k}\right)\left(v^{k+1}_i - v^k_i\right), \tvi^k\right) + 
%	\frac{Cn}{We}\left( \pd{}{x_j}\left({\pd{\tphi^{k}}{x_i}\pd{\tphi^{k}}{x_j}}\right), \delta t \, \tvi^k\right) \\ & \qquad \qquad \qquad + 
%	\frac{1}{We}\left(\pd{\tp^{k}}{x_i}, \delta t \, \tvi^k\right)  
%	- \frac{1}{Re}\left(\pd{}{x_k}\left(\eta\left(\tphi^{k}\right){\pd{\tvi^{k}}{x_k}}\right), \delta t \,\tvi^k\right) \\ & \qquad \qquad \qquad -
%	 \frac{1}{Fr}\left(\hat{g_i}, \delta t \, \rho\left(\tphi^{k}\right)\tvi^k\right)= 0,
%	\end{split}\\  
	\begin{split}
	&\frac{1}{2}\int_{\Omega}\left[ \rho\left(\phi^{k+1}\right) \norm{{\vec{v}}^{k+1}}^2 - \rho\left(\phi^{k}\right) \norm{\vec{v}^{k}}^2 \right]  \, d\vec{x}  
	- \frac{1}{8}\int_{\Omega}\left[ \rho\left(\phi^{k+1}\right) - \rho\left(\phi^{k}\right) \right] \norm{\vec{v}^{k+1} - \vec{v}^{k} }^2 \, d\vec{x}
	\\&
	+\frac{Cn}{We}\left( \pd{}{x_j}\left({\pd{\tphi^{k}}{x_i}\pd{\tphi^{k}}{x_j}}\right), \delta t \,\tvi^k\right) \\ &  + 
	\frac{1}{We}\left(\pd{\tp^{k}}{x_i}, \delta t \, \tvi^k\right) 
	- \frac{1}{Re} \left(\pd{}{x_j}\left(\tetak {\pd{\tvi^{k}}{x_j}}\right), \delta t \, \tvi^k\right)  -
	 \frac{1}{Fr}\left(\hat{g_i}, \delta t \, \trhok \tvi^k\right)= 0,
	\end{split}
	\label{eqn:disc_nav_stokes_inner_prod_no_advec}
	\end{align}
		where we made use of the fact that $\tvi^k = (v_i^{k+1}+v_i^k)/2$ and subsequently~\cref{lem:evolution_term_estimate}.
	We can now use solenoidality of the velocity field to get rid of the pressure term.  We can do this by integrating-by-parts on the pressure term:
	\begin{align}
	\begin{split}	
	%&\frac{1}{2}\rho\left(\tphi^{k}\right)\left(\norm{\vec{v}^{k+1}}^2_{L^2} -\norm{\vec{v}^{k}}^2_{L^2}\right)  + 
	& \quad \frac{1}{2}\int_{\Omega}\left[ \rho\left(\phi^{k+1}\right)\norm{{\vec{v}}^{k+1}}^2 - \rho\left(\phi^{k}\right)\norm{\vec{v}^{k}}^2 \right]  \, d\vec{x}
	- \frac{1}{8}\int_{\Omega}\left[ \rho\left(\phi^{k+1}\right) - \rho\left(\phi^{k}\right) \right] \norm{\vec{v}^{k+1} - \vec{v}^{k} }^2 \, d\vec{x}
	\\&
	\qquad  + \frac{Cn}{We}\left( \pd{}{x_j}\left({\pd{\tphi^{k}}{x_i}\pd{\tphi^{k}}{x_j}}\right), \delta t \,\tvi^k\right) \\ & \qquad  
	-\frac{\delta t}{We}\left(\tp^{k}, \pd{\tvi^k}{x_i}\right)  
	-\frac{1}{Re} \left(\pd{}{x_j}\left(\tetak {\pd{\tvi^{k}}{x_j}}\right), \delta t \, \tvi^k\right) 
	-\frac{1}{Fr}\left(\hat{g_i}, \delta t \, \trhok \tvi^k\right) = 0, \\
	\end{split} \\
	\displaybreak[0]
	\begin{split}
	\implies \quad 
	%&\frac{1}{2} \left(\rho\left(\phi^{k+1}\right)\norm{\vec{v}^{k+1}}^2 - \rho\left(\phi^{k}\right)\norm{\vec{v}^{k}}^2 \right)  + 
	& \quad \frac{1}{2}\int_{\Omega}\left[ \rho\left(\phi^{k+1}\right)\norm{{\vec{v}}^{k+1}}^2 - \rho\left(\phi^{k}\right)\norm{\vec{v}^{k}}^2 \right]  \, d\vec{x}
	- \frac{1}{8}\int_{\Omega}\left[ \rho\left(\phi^{k+1}\right) - \rho\left(\phi^{k}\right) \right] \norm{\vec{v}^{k+1} - \vec{v}^{k} }^2 \, d\vec{x}
	\\& 
	\qquad  +\frac{Cn}{We}\left( \pd{}{x_j}\left({\pd{\tphi^{k}}{x_i}\pd{\tphi^{k}}{x_j}}\right), \delta t \, \tvi^k\right)  %\\&\qquad   
	-\frac{1}{Re} \left(\pd{}{x_j}\left(\tetak{\pd{\tvi^{k}}{x_j}}\right), \delta t \, \tvi^k\right) 
	-\frac{1}{Fr}\left(\hat{g_i}, \delta t \, \trhok \tvi^k\right) = 0,\label{eqn:disc_nav_stokes_inner_prod_no_pres} 
	\end{split} \\
	\begin{split}
	\implies \quad 
	%&\frac{1}{2}\rho\left(\tphi^{k}\right)\left(\norm{\vec{v}^{k+1}}^2_{L^2} -\norm{\vec{v}^{k}}^2_{L^2}\right)  + 
	& \quad \frac{1}{2}\int_{\Omega}\left[ \rho\left(\phi^{k+1}\right)\norm{{\vec{v}}^{k+1}}^2 - \rho\left(\phi^{k}\right)\norm{\vec{v}^{k}}^2 \right]  \, d\vec{x} %\\ & \qquad
	- \frac{1}{8}\int_{\Omega}\left[ \rho\left(\phi^{k+1}\right) - \rho\left(\phi^{k}\right) \right] \norm{\vec{v}^{k+1} - \vec{v}^{k} }^2 \, d\vec{x}
	\\& 
	\qquad +\frac{Cn}{We}\left( \pd{}{x_j}\left({\pd{\tphi^{k}}{x_i}\pd{\tphi^{k}}{x_j}}\right), \delta t \,\tvi^k\right) 
	%\\&  \qquad    
	+ \frac{\delta t}{Re} \left( \sqrt{\tetak} \, {\pd{\tvi^{k}}{x_j}}, \sqrt{\tetak} \, {\pd{\tvi^{k}}{x_j}}\right) 
	 -\frac{1}{Fr}\left(\hat{g_i}, \delta t \, \trhok \tvi^k\right) = 0,\label{eqn:disc_nav_stokes_inner_prod_no_pres_weak}
	\end{split} \\
	\begin{split}
	\implies \quad 
	%&\frac{1}{2}\rho\left(\tphi^{k}\right)\left(\norm{\vec{v}^{k+1}}^2_{L^2} -\norm{\vec{v}^{k}}^2_{L^2} \right)  + 
	& \quad \frac{1}{2}\int_{\Omega}\left[ \rho\left(\phi^{k+1}\right)\norm{{\vec{v}}^{k+1}}^2 - \rho\left(\phi^{k}\right)\norm{\vec{v}^{k}}^2 \right]  \, d\vec{x}
	- \frac{1}{8}\int_{\Omega}\left[ \rho\left(\phi^{k+1}\right) - \rho\left(\phi^{k}\right) \right] \norm{\vec{v}^{k+1} - \vec{v}^{k} }^2 \, d\vec{x}
	\\& 
	\qquad + \frac{Cn}{We}\left( \pd{}{x_j}\left({\pd{\tphi^{k}}{x_i}\pd{\tphi^{k}}{x_j}}\right), \delta t \,\tvi^k\right) 
	%\\&  \qquad   
	+ \frac{\delta t}{Re} \norm{\sqrt{\tetak} \, \nabla \widetilde{\vec{v}}^k}^2_{L^2}
	 -\frac{1}{Fr}\left(\hat{g_i}, \delta t \, \trhok \tvi^k\right) = 0.
	\label{eqn:disc_nav_stokes_inner_prod_no_pres_weak_norm}
	\end{split}
	\end{align}
	Next, we invoke~\cref{lem:forcing_equivalent} and write~\cref{eqn:disc_nav_stokes_inner_prod_no_pres_weak_norm} as:
	\begin{align}
%	&\frac{1}{2}\rho\left(\phi^{k+1}\right)\left(\norm{\tvi^{k+1}}^2_{L^2} -\norm{\tvi^{k}}^2_{L^2}\right)  + 
%	\frac{\delta t}{WeCn}\left(\tphi^k\pd{\tmu^{k}}{x_i},\tvi^k\right) \label{eqn:disc_nav_stokes_eqv_forcing} 
%	+ \frac{\eta(\phi^{k+1}) \delta t}{Re} \norm{\pd{\tvi^{k}}{x_k}}^2_{L^2} = 0, \\
	%\implies \quad &
	\begin{split}
	%\frac{1}{2}\rho\left(\tphi^{k}\right)\left(\norm{\vec{v}^{k+1}}^2_{L^2} -\norm{\vec{v}^{k}}^2_{L^2}\right)  
	&\frac{1}{2}\int_{\Omega}\left[ \rho\left(\phi^{k+1}\right)\norm{{\vec{v}}^{k+1}}^2 - \rho\left(\phi^{k}\right)\norm{\vec{v}^{k}}^2 \right]  \, d\vec{x}
	- \frac{1}{8}\int_{\Omega}\left[ \rho\left(\phi^{k+1}\right) - \rho\left(\phi^{k}\right) \right] \norm{\vec{v}^{k+1} - \vec{v}^{k} }^2 \, d\vec{x}
	\\& \quad\quad\quad\quad\quad\quad
	+ \frac{\delta t}{WeCn}\left(\tphi^k\tvi^k,\pd{\tmu^{k}}{x_i}\right) 
	 + \frac{\delta t}{Re} \norm{\sqrt{\tetak} \, \nabla \widetilde{\vec{v}}^k}^2_{L^2} 
	%\\&
	- \frac{1}{Fr}\left(\hat{g_i}, \delta t \, \trhok \, \tvi^k\right) = 0.
	\end{split}
%	\frac{1}{2}\rho\left(\phi^{k+1}\right)\left(\norm{\vec{v}^{k+1}}^2 -\norm{\vec{v}^{k}}^2\right)  + 
%	\frac{\delta t}{WeCn}\left(\tphi^k\tvi^k,\pd{\tmu^{k}}{x_i}\right) \label{eqn:disc_nav_stokes_eqv_forcing_ch} 
%	+ \frac{\delta t}{Re} \sum_i \sum_j \abs{\sqrt{\eta\left(\phi^{k+1}\right)}\pd{\tvi^{k}}{x_j}}^2. 
	\label{eqn:disc_nav_stokes_eqv_forcing_ch_without_grav} 
	\end{align}
	Next we simplify the gravity term noting that
	\begin{align}
		-\frac{1}{Fr}\left(\hat{g_i}, \delta t \, \trhok \, \tvi^k\right) = -\frac{1}{Fr}\left(\pd{\left(-y\right)}{x_i}, \delta t \, \trhok \, \tvi^k\right) = 
		-\frac{1}{Fr}\left(y, \delta t \, \pd{ \left( \trhok \tvi^k \right)}{x_i}\right),
	\label{eqn:lemma2_estimate_without_gravSimple}
	\end{align}
	where $y = x_2$ and $\hat{\vec{g}} = (0, -1, 0)$. 
	Note that the boundary terms vanish
	in the process of integrating-by-parts due to the fact that ${\tvecv}^{k+1} \in \vec{H}_{0}^1(\Omega)$.
	%Now, we can use \cref{eqn:cont} and write
	%\begin{align}
	%\begin{split}
	%\frac{1}{Fr}\left(y, \delta t \, \pd{ \left(\rho\left(\phi^{k+1}\right)\tvi^k\right) }{x_i}\right) &= 
	%-\frac{\delta t}{Fr}\left(y,  \, \left( \frac{\rho\left(\phi^{k+1}\right) - \rho\left(\phi^{k}\right)}{\delta t} + \frac{1}{Pe} \pd{\tJi^{k}}{x_i} \right) \right), \\
	%& =-\frac{1}{Fr}\left(y,  \, \left( \rho\left(\phi^{k+1}\right) - \rho\left(\phi^{k}\right) \right) \right)  
	%-\frac{\delta t}{Fr\,Pe}\left(y,  \, \left( \pd{\tJi^{k}}{x_i} \right) \right),
	%\end{split}
	%\end{align}
	Let $C_1 = \frac{\left(\rho_- - \rho_+\right)}{2\rho_+ Cn} \, m(\phi)$, then using 
	\cref{eqn:thermo_consistency_semi_disc}, 
	the fact that $\rho$ is affine in $\phi$,
    \cref{eqn:phi_eqn_var_semi_disc}, and 
	\cref{eqn:lemma2_estimate_without_gravSimple} we obtain:
	\begin{align}
	\begin{split}
	&-\frac{1}{Fr}\left(\hat{g_i}, \delta t \, \trhok \, \tvi^k\right)
	 = -\frac{1}{Fr}\left(y,  \,  \rho\left(\phi^{k+1}\right) - \rho\left(\phi^{k}\right)  \right)  
	-\frac{\delta t \, C_1}{Fr\,Pe}\left(y,  \, \pd{}{x_i} \left( \pd{\tmu^{k}}{x_i} \right)\right)\\
	&= -\frac{1}{Fr}\left(y,  \,  \rho\left(\phi^{k+1}\right) - \rho\left(\phi^{k}\right) \right)   
	+\frac{\delta t \, C_1}{Fr\,Pe}\left(\pd{y}{x_i} ,  \, \pd{\tmu^{k}}{x_i} \right)\\
	&= -\frac{1}{Fr}\left(y,  \,  \rho\left(\phi^{k+1}\right) - \rho\left(\phi^{k}\right)  \right)  
	- \frac{\delta t \, C_1}{Fr\,Pe}\left( \pd{}{x_i}\left(\pd{y}{x_i}\right) ,  \, \tmu^{k} \right) 
	 + \frac{\delta t \, C_1}{Fr\,Pe} \int_{\mathrm{d}\Omega}\tmu^{k}\left(\pd{y}{x_i}\right) \hat{n_i} \mathrm{d}\vec{x}\\
	&= -\frac{1}{Fr}\left(y,  \,  \rho\left(\phi^{k+1}\right) - \rho\left(\phi^{k}\right) \right)   
	  + \frac{\delta t \, C_1}{Fr\,Pe} \int_{\mathrm{d}\Omega}\tmu^{k}\hat{g_i} \hat{n_i} \mathrm{d}\vec{x} \\
	  &= -\frac{1}{Fr}\left(y,  \,  \rho\left(\phi^{k+1}\right) - \rho\left(\phi^{k}\right) \right),
	\end{split}
	\label{eqn:grav_stuff}
	\end{align} 
where $\hat{n_i}$ is outward normal to the boundary of the domain $\Omega$.  
	\begin{remark}
		In the last line of \eqref{eqn:grav_stuff} we asserted that 
		\begin{equation}
		\frac{\delta t C_1}{Fr\,Pe} \int_{\mathrm{d}\Omega}\tmu^{k}\hat{g_i} \hat{n_i} \mathrm{d}\vec{x} = 0.
		\end{equation}
		This is true as long as there is no three-phase contact line on any boundary on which $\hat{n_i} \hat{g_i}$ is non-zero. For the purpose of the
		analysis presented here, we will assume that this is true.
	\end{remark} 
	%Under the above assumption, we can write 
	%\begin{equation}
	%-\frac{1}{Fr}\left(\hat{g_i}, \delta t \, \rho\left(\tphi^{k}\right)\tvi^k\right) =  
%	-\frac{1}{Fr}\left(y, \delta t \, \pd{ \left(\rho\left(\tphi^{k}\right)\tvi^k\right) }{x_i}\right) = 
%	\frac{1}{Fr}\left(y,  \,  \rho\left(\phi^{k+1}\right) - \rho\left(\phi^{k}\right) \right).
%	\end{equation}
	Combining \eqref{eqn:grav_stuff} with \cref{eqn:disc_nav_stokes_eqv_forcing_ch_without_grav} yields the desired result:
	\begin{align}
	%	&\frac{1}{2}\rho\left(\phi^{k+1}\right)\left(\norm{\tvi^{k+1}}^2_{L^2} -\norm{\tvi^{k}}^2_{L^2}\right)  + 
	%	\frac{\delta t}{WeCn}\left(\tphi^k\pd{\tmu^{k}}{x_i},\tvi^k\right) \label{eqn:disc_nav_stokes_eqv_forcing} 
	%	+ \frac{\eta(\phi^{k+1}) \delta t}{Re} \norm{\pd{\tvi^{k}}{x_k}}^2_{L^2} = 0, \\
	%\implies \quad &
	\begin{split}
	%\frac{1}{2}\rho\left(\tphi^{k}\right)\left(\norm{\vec{v}^{k+1}}^2_{L^2} -\norm{\vec{v}^{k}}^2_{L^2}\right)  + & \,
	\frac{1}{2}
	&\int_{\Omega}\left[ \rho\left(\phi^{k+1}\right)\norm{{\vec{v}}^{k+1}}^2 - \rho\left(\phi^{k}\right)\norm{\vec{v}^{k}}^2 \right]  \, d\vec{x}
	- \frac{1}{8}\int_{\Omega}\left[ \rho\left(\phi^{k+1}\right) - \rho\left(\phi^{k}\right) \right] \norm{\vec{v}^{k+1} - \vec{v}^{k} }^2 \, d\vec{x}  
	 \,\\
	&+ \frac{\delta t}{WeCn}\left(\tphi^k\tvi^k,\pd{\tmu^{k}}{x_i}\right) 
	+ \frac{\delta t}{Re} \norm{\sqrt{\tetak} \, \nabla \widetilde{\vec{v}}}^2_{L^2} 
	%\\
	+ \, \frac{1}{Fr}\left(y,  \,  \rho\left(\phi^{k+1}\right) - \rho\left(\phi^{k}\right)  \right) = 0.
	\end{split}
	%	\frac{1}{2}\rho\left(\phi^{k+1}\right)\left(\norm{\vec{v}^{k+1}}^2 -\norm{\vec{v}^{k}}^2\right)  + 
	%	\frac{\delta t}{WeCn}\left(\tphi^k\tvi^k,\pd{\tmu^{k}}{x_i}\right) \label{eqn:disc_nav_stokes_eqv_forcing_ch} 
	%	+ \frac{\delta t}{Re} \sum_i \sum_j \abs{\sqrt{\eta\left(\phi^{k+1}\right)}\pd{\tvi^{k}}{x_j}}^2. 
	\label{eqn:disc_nav_stokes_eqv_forcing_ch} 
	\end{align}	
\end{proof}

\vspace{2cm}
We now have all the ingredients to prove energy-stability.  Our argument uses the fact that the energy functional~\cref{eqn:energy_functional} is decreasing as the discrete solution is evolving in time.  At the semi-discrete level, the successive decrease of the energy functional for each time step represents adherence to the second law of thermodynamics.  
%Therefore, if the difference between the energy functional between two time steps is negative, then we have achieved energy-stability. 
We prove energy-stability in the following theorem. 
		
\begin{theorem}{energy-stability}{energy_stability}
		The time discretization of the Cahn-Hilliard Navier-Stokes (CHNS) equations as described by~\cref{eqn:nav_stokes_var_semi_disc} -- \cref{eqn:phi_eqn_var_semi_disc} satisfies the following energy law:
		\begin{equation}
		\begin{split}
		E_{tot}\left(\vec{v}^{k+1},\phi^{k+1} \right) - E_{tot}\left(\vec{v}^k,\phi^k\right) &= \frac{-\delta t}{Re} \norm{\sqrt{\tetak} \, \widetilde{\vec{v}}^k}^2_{L^2}
		 - \frac{\delta t}{Pe Cn^2We} \norm{\nabla \widetilde{\mu}^k}^2_{L^2} \\ &+ \frac{1}{24} \frac{1}{WeCn}\left( \d{^3\psi}{\phi^3}\Biggl|_{\phi=\lambda\left(\vec{x}\right)}, \left(\phi^{k+1} - \phi^k\right)^3\right)\\
			&+ \frac{1}{8}\int_{\Omega}\left[ \rho\left(\phi^{k+1}\right) - \rho\left(\phi^{k}\right) \right] \norm{\vec{v}^{k+1} - \vec{v}^{k} }^2 \, d\vec{x},
		\label{energy_law}
		\end{split}
		\end{equation}	
		for some $\lambda\left(\vec{x}\right)$ between $\phi^k\left(\vec{x}\right)$ and $\phi^{k+1}\left(\vec{x}\right)$. The time discretization is energy-stable in the following sense:
		\begin{equation}
		\label{eqn:energy_non_increasing}
		E_{tot}\left(\vec{v}^{k+1},\phi^{k+1} \right) \le E_{tot}\left(\vec{v}^k,\phi^k\right),
		\end{equation}
		provided that the following time-step restriction is observed:
		\begin{equation}
				0 \le \delta t \leq 
		\left(\frac{\frac{1}{Re}\left(\norm{\sqrt{\tetak} \, \nabla \widetilde{\vec{v}}^k}_{L^2}^2\right) + 
			\frac{1}{Pe Cn^2We}\norm{\nabla \widetilde{\mu}^k}_{L^2}^2}
		{\frac{C_m \, L_{\phi,\text{max}}^3 \, P_{\text{max}}}{WeCn}
	 + \frac{C_m \, \alpha L_{\phi,\text{max}} \, L_{\vec{v},\text{max}}}{8}}\right)^{\frac{1}{2}}. \label{eqn:timestep_condition_thr}
		\end{equation}	
%\label{thrm:energy_stability}
\end{theorem}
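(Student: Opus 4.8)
The plan is to compute $E_{tot}(\vec{v}^{k+1},\phi^{k+1})-E_{tot}(\vec{v}^k,\phi^k)$ directly from the definition~\cref{eqn:energy_functional}, splitting it into four contributions: the kinetic energy $\frac{1}{2}\int_\Omega \rho(\phi)\norm{\vec{v}}^2\,\mathrm{d}\vec{x}$, the bulk free energy $\frac{1}{CnWe}\int_\Omega \psi(\phi)\,\mathrm{d}\vec{x}$, the interfacial energy $\frac{Cn}{2We}\int_\Omega \norm{\nabla\phi}^2\,\mathrm{d}\vec{x}$, and the gravitational potential $\frac{1}{CnWeFr}\int_\Omega \rho(\phi)\,y\,\mathrm{d}\vec{x}$. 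The idea is to reproduce each of these differences by testing the semi-discrete equations of~\cref{defn:variational_form_sem_disc} with carefully chosen test functions, so that the two dissipation terms and the two $\mathcal{O}(\delta t^3)$ corrections in the target identity~\cref{energy_law} emerge from the structure of the scheme rather than being inserted by hand.

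First I would treat the phase-field contributions. Testing the chemical potential equation~\cref{eqn:mu_eqn_var_semi_disc} with $q=\frac{1}{CnWe}(\phi^{k+1}-\phi^k)$ produces three pieces: a coupling term $-\frac{1}{CnWe}(\phi^{k+1}-\phi^k,\tmu^k)$; the bulk term $\frac{1}{CnWe}(\tpsi'^{k},\phi^{k+1}-\phi^k)$, which by~\cref{lem:free_en_taylor} equals the exact free-energy difference $\frac{1}{CnWe}(\psi(\phi^{k+1})-\psi(\phi^k),1)$ minus the cubic remainder the same lemma isolates; and the gradient term $\frac{Cn}{We}(\nabla(\phi^{k+1}-\phi^k),\nabla\tphi^k)$, which telescopes to the interfacial-energy difference $\frac{Cn}{2We}(\norm{\nabla\phi^{k+1}}^2-\norm{\nabla\phi^k}^2)$ using $(a-b)(a+b)=a^2-b^2$ and $\tphi^k=(\phi^{k+1}+\phi^k)/2$. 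I would then test the Cahn-Hilliard equation~\cref{eqn:phi_eqn_var_semi_disc} with $q=\frac{\delta t}{CnWe}\tmu^k$, which rewrites the coupling term $\frac{1}{CnWe}(\tmu^k,\phi^{k+1}-\phi^k)$ as the advection pairing $\frac{\delta t}{WeCn}\left(\tphi^k\tvi^k,\pd{\tmu^k}{x_i}\right)$ minus the phase dissipation $\frac{\delta t}{PeCn^2We}\norm{\nabla\tmu^k}^2_{L^2}$, the latter being exactly the second dissipation term of~\cref{energy_law}.

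The decisive coupling step is to invoke~\cref{lem:advection_estimate}, which identifies that same advection pairing with the negative kinetic-energy difference, plus the density--velocity correction $\frac{1}{8}\int_\Omega[\rho(\phi^{k+1})-\rho(\phi^k)]\norm{\vec{v}^{k+1}-\vec{v}^k}^2\,\mathrm{d}\vec{x}$, minus the viscous dissipation $\frac{\delta t}{Re}\norm{\sqrt{\tetak}\,\nabla\widetilde{\vec{v}}^k}^2_{L^2}$, and minus a gravitational pairing in $y$ and $\rho(\phi^{k+1})-\rho(\phi^k)$; recall that~\cref{lem:advection_estimate} already folds in~\cref{lem:evolution_term_estimate} for the kinetic part and the integration-by-parts identity~\cref{eqn:grav_stuff} for the gravitational part. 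Substituting this into the phase-field balance and adding back the kinetic-energy and gravitational-potential differences taken directly from~\cref{eqn:energy_functional}, the kinetic-energy differences match and combine, the gravitational contributions cancel, and one is left with precisely the four surviving terms of~\cref{energy_law}.

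Finally, for the stability statement~\cref{eqn:energy_non_increasing} I would bound the two positive correction terms: \cref{lem:correction_estimate} controls the cubic remainder by $(C_m L_{\phi,\text{max}}^3 P_{\text{max}}/(WeCn))\,\delta t^3$ and~\cref{lem:temporal_correction_estimate} controls the density--velocity correction by $\frac{1}{8}C_m\,\alpha L_{\phi,\text{max}} L_{\vec{v},\text{max}}^2\,\delta t^3$, whereas the two dissipation terms are linear in $\delta t$. Requiring the sum of the positive $\mathcal{O}(\delta t^3)$ corrections not to exceed the magnitude of the negative $\mathcal{O}(\delta t)$ dissipation and then cancelling one power of $\delta t$ gives a quadratic bound on $\delta t$ whose positive root is exactly~\cref{eqn:timestep_condition_thr}. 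I expect the main obstacle to be the bookkeeping in the coupling step: the kinetic, viscous, and gravitational terms delivered by~\cref{lem:advection_estimate} descend from the momentum equation and carry no $1/(CnWe)$ weight, whereas the free-energy, interfacial, and phase-dissipation terms descend from the phase-field equations and do, so the test-function scalings must be fixed so that the kinetic-energy differences line up and every auxiliary term (pressure, advection, and gravity) cancels, leaving only the four advertised terms. The genuinely delicate estimates---the non-telescoping cubic free-energy remainder and the density variation inside the kinetic energy---have already been isolated in the preceding lemmas, so what remains is to assemble them with the correct signs and weights.
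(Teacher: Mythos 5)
Your proposal is correct and is essentially the paper's own proof: the paper likewise tests \cref{eqn:mu_eqn_var_semi_disc} with $\phi^{k+1}-\phi^k$ and \cref{eqn:phi_eqn_var_semi_disc} with $\delta t\,\tmu^k$ (dividing by $WeCn$ at the end rather than scaling the test functions, as you do), applies \cref{lem:free_en_taylor} together with the telescoping of the gradient term, substitutes \cref{lem:advection_estimate} for the advection pairing, and then obtains \cref{eqn:timestep_condition_thr} by requiring the $\mathcal{O}(\delta t)$ dissipation to dominate the $\mathcal{O}(\delta t^3)$ corrections bounded in \cref{lem:correction_estimate,lem:temporal_correction_estimate}. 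The only wrinkles—which the paper shares rather than resolves—are the $1/(CnWe)$ weight on the gravitational term (present in \cref{eqn:energy_functional} but absent from the term delivered by \cref{lem:advection_estimate}) and the mismatch between the $L_{\vec{v},\text{max}}^2$ of \cref{lem:temporal_correction_estimate} and the $L_{\vec{v},\text{max}}$ appearing in \cref{eqn:timestep_condition_thr}; your bookkeeping is faithful to the lemmas as stated.
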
   
	
\begin{proof}
The proof uses $L^2$ estimates of the semi-discrete equations to estimate the energy change between two-time steps.  If the estimate is strictly negative, we have an energy-stable scheme.  We begin with~\cref{eqn:phi_eqn_var_semi_disc} with the test function $q=\delta t \, \tmu^k$: 
		\begin{equation}
		\begin{split}
		\left(\phi^{k+1} - \phi^k, \tmu^k\right) &= 
		  \left( \tvi^{k} \tphi^{k} , \delta t \, \pd{\tmu^k}{x_i} \right) - \frac{\delta t}{PeCn} \norm{\nabla \widetilde{\mu}^k}^2_{L^2}.	
		\label{eqn:phi_eqn_simple}
		\end{split}
		\end{equation}
Next, we take~\cref{eqn:mu_eqn_var_semi_disc} with test function $q=\phi^{k+1} - \phi^k$, where~\cref{eqn:psi_ave_def} defines $\tpsi'$:
		\begin{align}
%		\left(\tmu^{k} , \phi^{k+1} - \phi^k\right) = \left(\widetilde{\psi^\prime(\phi)}^k, \phi^{k+1} - \phi^k\right) + Cn^2 \left(\left({\pd{\tphi^{k}}{x_i}}\right), \left({\pd{\left(\phi^{k+1} - \phi^k\right)}{x_i}}\right) \right), \\
%		\implies \left(\tmu^{k} , \phi^{k+1} - \phi^k\right) = \left(\widetilde{\psi^\prime(\phi)}^k, \phi^{k+1} - \phi^k\right) + \frac{Cn^2}{2} \left(\left({\pd{\left(\phi^{k+1} + \phi^k\right)}{x_i}}\right), \left({\pd{\left(\phi^{k+1} - \phi^k\right)}{x_i}}\right) \right),\\		
		 \left(\tmu^{k} , \phi^{k+1} - \phi^k\right) = \left(\tpsi', \phi^{k+1} - \phi^k\right) + \frac{Cn^2}{2} \left(\norm{\nabla \phi^{k+1}}^2_{L^2} - \norm{\nabla \phi^{k}}^2_{L^2}\right),
		\label{eqn:mu_eq_unsimplified}
		\end{align}
where we also use the fact that $\tphi^{k} = (\phi^{k+1} + \phi^k)/2$. The first term on right-hand side of~\cref{eqn:mu_eq_unsimplified} simplifies further using~\cref{lem:free_en_taylor}:
		\begin{equation}
		\begin{split}
		\left(\tmu^{k} , \phi^{k+1} - \phi^k\right) &= \left(\psi(\phi^{k+1}) - \psi(\phi^{k}), 1\right) - \frac{1}{24} \left(\d{^3\psi}{\phi^3}\Biggl|_{\phi=\lambda\left(\vec{x}\right)},
		\left(\phi^{k+1} - \phi^k\right)^3\right)  \\
		&+ \frac{Cn^2}{2} \left(\norm{\nabla \phi^{k+1}}^2_{L^2} - \norm{\nabla \phi^{k}}^2_{L^2}\right).
		\label{eqn:mu_eq_simple}
		\end{split}
		\end{equation}
		 Now, combining~\cref{eqn:mu_eq_simple} and~\cref{eqn:phi_eqn_simple}, we have: 
		\begin{equation}
		\begin{split}
		 \left(\psi(\phi^{k+1}) - \psi(\phi^{k}), 1\right) &- 
		\frac{1}{24} \left(\d{^3\psi}{\phi^3}\Biggl|_{\phi=\lambda\left(\vec{x}\right)},
		\left(\phi^{k+1} - \phi^k\right)^3\right) +  
		\frac{Cn^2}{2} \left(\norm{\nabla \phi^{k+1}}^2_{L^2} - \norm{\nabla \phi^{k}}^2_{L^2}\right) \\ &= 
		\left(\tvi^{k} \tphi^{k}, \delta t \pd{\tmu^k}{x_i}\right) - \frac{\delta t}{PeCn} \norm{\nabla \widetilde{\vec{\mu}}^k}^2_{L^2}.
		\end{split}
		\label{eqn:ch_eq_simple}
		\end{equation}
		%$\left(\pd{\{\tvi^{k} \tphi^{k}\}}{x_i}, \delta t \tmu^k\right)$ can be estimated by invoking lemma \ref{lem:advection_estimate}.  Recall from the lemma, 
		Next, we divide~\cref{eqn:ch_eq_simple} by $We Cn$ and from~\cref{lem:advection_estimate}, we can substitute the first term on the right-hand side by~\cref{eqn:cahn_hilliard_term_lemma}:
%		\begin{equation}
%		\begin{split}
%		\frac{\delta t}{WeCn}\left(\tphi^k\tvi^k,\pd{\tmu^{k}}{x_i}\right) &= -\frac{1}{2}\rho\left(\phi^{k+1}\right)\left(\norm{\tvi^{k+1}}^2_{L^2} -\norm{\tvi^{k}}^2_{L^2}\right)   
%		- \frac{\delta t}{Re} \left(\norm{ \sqrt{\eta(\phi^{k+1})}\pd{\tvi^{k}}{x_k}}^2_{L^2}\right) \\
%		& \quad - \frac{1}{Fr}\left(y,  \, \left( \rho\left(\phi^{k+1}\right) - \rho\left(\phi^{k}\right) \right) \right).
%		\end{split} 
%		\end{equation}
%		Now, we can multiply \cref{eqn:ch_eq_simple} with $\frac{1}{Cn We}$ and combine with the equation above to get, 
		\begin{equation}
		\begin{split}
		%&\frac{1}{2}\rho\left(\tphi^{k}\right)\left(\norm{\vec{v}^{k+1}}^2_{L^2} -\norm{\vec{v}^{k}}^2_{L^2}\right) 
		& \frac{1}{2}\int_{\Omega}\left[\rho\left(\phi^{k+1}\right)\norm{{\vec{v}}^{k+1}}^2 - \rho\left(\phi^{k}\right)\norm{\vec{v}^{k}}^2 \right]  \, d\vec{x}
		- \frac{1}{8}\int_{\Omega}\left[ \rho\left(\phi^{k+1}\right) - \rho\left(\phi^{k}\right) \right] \norm{\vec{v}^{k+1} - \vec{v}^{k} }^2 \, d\vec{x}\\
		&\qquad \quad
		+ \frac{\delta t}{Re} \left(\norm{ \sqrt{\tetak} \, \nabla \widetilde{\vec{v}}^k}^2_{L^2}\right) 
		%\\ & \qquad \qquad 
		+ \frac{1}{WeCn}\left(\psi(\phi^{k+1}) - \psi(\phi^{k}), 1\right) \\
		& \qquad \quad - \frac{1}{24 \, WeCn}
		 \left(\d{^3\psi}{\phi^3}\Biggl|_{\phi=\lambda\left(\vec{x}\right)},
		\left(\phi^{k+1} - \phi^k\right)^3\right) \\
		& \qquad \quad + \frac{Cn}{2We} \left(\norm{\nabla \phi^{k+1}}^2_{L^2} - \norm{\nabla \phi^{k}}^2_{L^2}\right) 
		+ 
		\frac{\delta t}{PeCn^2We} \norm{\nabla \tmu^{k}}^2_{L^2} 
		\\ & \qquad \quad + \frac{1}{Fr}\left(y,  \,  \rho\left(\phi^{k+1}\right) - \rho\left(\phi^{k}\right) \right)  =0.
        \end{split}
         \label{eqn:disc_nav_stokes_eqv_forcing_com}
		\end{equation}
		Simplifying and using the definition of the energy functional,~\cref{eqn:energy_functional}, 
		we obtain energy law \cref{energy_law}.
%		\begin{equation}
%		\begin{split}
%			E_{\text{tot}}\left(\vec{v}^{k+1},\phi^{k+1}\right) - E_{\text{tot}}\left(\vec{v}^k,\phi^k\right)  &= 
%			\frac{-\delta t}{Re}\norm{ \sqrt{\tetak} \, \nabla \widetilde{\vec{v}}^k}_{L^2}^2 - 
%			\frac{\delta t}{Pe Cn^2We} \norm{\nabla \tmu^{k}}_{L^2}^2 \\
%			&+\frac{1}{24 \, WeCn}
%			 \left(\d{^3\psi}{\phi^3}\Biggl|_{\phi=\lambda\left(\vec{x}\right)}, \left(\phi^{k+1} - \phi^k\right)^3\right)\\
%			&+ \frac{1}{8}\int_{\Omega}\left[ \rho\left(\phi^{k+1}\right) - \rho\left(\phi^{k}\right) \right] \norm{\vec{v}^{k+1} - \vec{v}^{k} }^2 \, d\vec{x}.
%			\end{split}
%		\end{equation}

\smallskip

        \indent
        The problem with energy law \cref{energy_law} is that the final two terms on the right-hand side are sign indeterminate. 
		Therefore, in order for this energy to be non-increasing in time (i.e., \cref{eqn:energy_non_increasing}), we require the following:
		\begin{equation}
		\begin{split}
		\frac{\delta t}{Re} \norm{\sqrt{\tetak} \, \nabla \widetilde{\vec{v}}^k}_{L^2}^2 + 
		\frac{\delta t}{Pe Cn^2We} \norm{\nabla \tmu^{k}}_{L^2}^2 &\geq 
		\frac{1}{24 \, WeCn} \left(\d{^3\psi}{\phi^3}\Biggl|_{\phi=\lambda\left(\vec{x}\right)},\left(\phi^{k+1} - \phi^k\right)^3\right)
		\\&\;\;
		+ \frac{1}{8}\int_{\Omega}\left[ \rho\left(\phi^{k+1}\right) - \rho\left(\phi^{k}\right) \right] \norm{\vec{v}^{k+1} - \vec{v}^{k} }^2 \, d\vec{x}.
		\end{split}
		\label{eqn:stab_condition}
		\end{equation}
		Using the estimates from~\cref{lem:correction_estimate,lem:temporal_correction_estimate}, we can guarantee this inequality
		provided that:
		\begin{equation}
		\begin{split}
		\frac{\delta t}{Re} \norm{\sqrt{\tetak} \, \nabla \widetilde{\vec{v}}^k}_{L^2}^2 + 
		\frac{\delta t}{Pe Cn^2We} \norm{\nabla \tmu^{k}}_{L^2}^2 &\geq
		\frac{C_m L^3_{\text{max}} P_{\text{max}}}{WeCn}  \delta t^3
		+ \frac{C_m \, \alpha L_{\phi,\text{max}} \, L_{\vec{v},\text{max}}}{8}\delta t^3.
		\end{split}
		\end{equation}
		This condition becomes a condition on the maximum energy-stable time-step size, which is provided by \cref{eqn:timestep_condition_thr}.
%		\begin{equation}
%		 0 \le \delta t \le \left( \frac{\frac{1}{Re} \norm{\sqrt{\tetak} \, \nabla \widetilde{\vec{v}}^k}_{L^2}^2 + 
%		\frac{1}{Pe Cn^2We} \norm{\nabla \tmu^{k}}_{L^2}^2}{\frac{C_m L_{\text{max}}^3}{WeCn} \norm{\frac{\psi^{\mathrm{\prime\prime\prime}}(\lambda\left(\vec{x}\right))}{24}}_{L^\infty(\Omega)}
%		+ \frac{C_m \, \alpha L_{\phi,\text{max}} \, L_{\vec{v},\text{max}}}{8}
%		} \right)^{\frac{1}{2}},
%		\label{eqn:timestep_condition}
%		\end{equation}
%		which proves the theorem.
\end{proof}
%\vspace{2.5in}
\begin{remark}
	Condition~\cref{eqn:timestep_condition_thr} is a weak condition (satisfied by most $\delta t$), as all the quantities in its statement are order one quantities.      
	%The bounds for $\left(\frac{ \psi^{\mathrm{\prime\prime\prime}} (\lambda\left(\vec{x}\right))}{24},\left(\phi^{k+1}\left(\vec{x}\right) - \phi^k\left(\vec{x}\right)\right)^3\right)$ are the absolute worst-case scenario, which in practice rarely happen.  
	While we cannot claim unconditional stability for the scheme, however, the scheme is energy-stable for large range of $\delta t$ values, which in practice allows us to take large time steps (for appropriate accuracy requirements).  A consequence is that we do not have to evaluate this condition at every timestep. This is reflected by numerical experiments we conduct.   
	%It is common practice in the literature to approximate the free energy functional such that it has a form which will not result in the cubic term in \cref{eq:cubic_term} (see \citep{Shen2010, Shen2010a,Shen2010b} for examples), which results in an unconditionally stable scheme.  In the estimate we have presented we do not make any approximations on the form of free energy functional, which results in a slightly tighter restriction on the time step restriction.  
\end{remark}
\begin{remark}
	\textcolor{black}{As an example let us consider the case with a very weak flow (small velocities).  In this case viscous dissipation term is close to zero:
	\[
	\frac{1}{Re} \norm{\sqrt{\tetak} \, \nabla \widetilde{\vec{v}}^k}_{L^2}^2 \approx 0.
	\]
	The other term in the numerator is finite and order one:
	\[
	\frac{1}{Pe Cn^2We} \norm{\nabla \tmu^{k}}_{L^2}^2 \sim {\mathcal O}(1).
	\] 
	Therefore, we have a finite order one numerator. On the other hand, notice that both the terms in the denominator of~\cref{eqn:timestep_condition_thr} are estimates of 
	\[
	\frac{1}{24} \left| \left( \d{^3\psi}{\phi^3}\Biggl|_{\phi=\lambda\left(\vec{x}\right)}, \left(\phi^{k+1}\left(\vec{x}\right) - \phi^k\left(\vec{x}\right)\right)^3 \right) \right|
	\]
	and
	\[ \left| 
	\frac{1}{8}\int_{\Omega}\left[ \rho\left(\phi^{k+1}\right) - \rho\left(\phi^{k}\right) \right] \norm{\vec{v}^{k+1} - \vec{v}^{k} }^2 \, d\vec{x} \right|,
	\] 
	respectively.  	
	In the limit of velocity going to zero the interface wouldn't move much between two time points (as advection is close to zero) and $\phi^{k+1}\left(\vec{x}\right) - \phi^k\left(\vec{x}\right) \rightarrow 0 $, also for very small velocities $\norm{\vec{v}^{k+1} - \vec{v}^{k} }^2 \rightarrow 0$.  Therefore, both the terms in the denominator of~\cref{eqn:timestep_condition_thr} would approach 0, and with the finite order 1 numerator the right hand side of~\cref{eqn:timestep_condition_thr} would actually approach infinity justifying our assertion that we would have stability for most timesteps.\todo[color=cyan]{R1: \#1.2}}
\end{remark}
\begin{remark}
%Our estimate for the time step size is identical to the one in~\citep{ Khanwale2020}.  Therefore, this second-order extension of the time scheme in~\citet{ Khanwale2020} with the fully-coupled approach preserves energy-stability.  
Our estimate for the time step size is modified to the one in~\citep{ Khanwale2020} for unequal densities, while extending the time-scheme to second order.
\end{remark}

\subsection{Solvability of the discrete-in-time, continuous-in-space CHNS system}
\label{subsec:semidiscrete_var_prob}
In this subsection we establish the solvability of \cref{eqn:nav_stokes_var_semi_disc} -- \cref{eqn:phi_eqn_var_semi_disc}.  The basic strategy for proving the existence results follows~\citep{Khanwale2020}.  We summarize the process as follows:
%\begin{itemize}
%	\item Show that \cref{eqn:mu_eqn_var_semi_disc} has the following property: given $\widetilde{\mu}^k$, 
%	then $\tphi^{k}$ is uniquely determined;
%	%
%	\item Show that \cref{eqn:nav_stokes_var_semi_disc} -- \cref{eqn:cont_var_semi_disc} have the following property: 
%	given $\widetilde{\mu}^k$, and hence $\phi^{k+1}$ from
%	\cref{eqn:mu_eqn_var_semi_disc} as stated above, then $\widetilde{\vec{v}}^k$ and $\widetilde{p}^{k}$ are uniquely determined;
%	%
%	\item This establishes $\tphi^{k}$, $\widetilde{\vec{v}}^k$, and $\widetilde{p}^{k}$ as uniquely determined by
%	$\widetilde{\mu}^k$; with this knowledge in hand, we can now view the remaining equation, \cref{eqn:phi_eqn_var_semi_disc}, 
%	as a scalar equation for $\widetilde{\mu}^k$;
%	%
%	\item Show that there exists a solution, $\widetilde{\mu}^k$, to  \cref{eqn:phi_eqn_var_semi_disc},
%	with $\tphi^{k}$ and $\widetilde{\vec{v}}^k$ understood
%	to be functions of $\widetilde{\mu}^k$ via \cref{eqn:nav_stokes_var_semi_disc} --  \cref{eqn:mu_eqn_var_semi_disc}.
%\end{itemize}

\begin{enumerate}
	\item Breakdown the problem into individual problems corresponding to each equation in \crefrange{eqn:nav_stokes_var_semi_disc}{eqn:phi_eqn_var_semi_disc}.  
	%Show that \cref{eqn:mu_eqn_var_semi_disc} has the following property: given $\widetilde{\mu}^k$, 
	%then $\tphi^{k}$ is uniquely determined;
	%
	\item Show that given $\widetilde{\mu}^k$, \cref{eqn:mu_eqn_var_semi_disc} uniquely determines $\tphi^{k}$. Subsequently, with knowledge of $\tphi^{k}$ from \cref{eqn:mu_eqn_var_semi_disc}, show that \cref{eqn:nav_stokes_var_semi_disc} -- \cref{eqn:cont_var_semi_disc} uniquely determines $\widetilde{\vec{v}}^k$ and $\widetilde{p}^{k}$. This establishes unique determination of $\tphi^{k}$, $\widetilde{\vec{v}}^k$, and $\widetilde{p}^{k}$ given $\widetilde{\mu}^k$. 
	\item From the above point the problem focus now can shift to~\cref{eqn:phi_eqn_var_semi_disc}, as a scalar equation for $\widetilde{\mu}^k$ which is a non-linear advection-diffusion type equation (due to fully-implicit discretisation).  
	\item Show that there exists a solution, $\widetilde{\mu}^k$, to  \cref{eqn:phi_eqn_var_semi_disc}, with $\tphi^{k}$ and $\widetilde{\vec{v}}^k$ understood
	to be functions of $\widetilde{\mu}^k$ via \crefrange{eqn:nav_stokes_var_semi_disc}{eqn:mu_eqn_var_semi_disc}.
\end{enumerate}

The bulk of the proof for existence focuses on showing that there exists a $\widetilde{\mu}^k$ which solves \cref{eqn:phi_eqn_var_semi_disc}.  This involves technical arguments that are leveraged from non-linear analysis.  The strategy is to use existence of solutions to pseudo-monotone operators from Brezis (see theorem 27.A \citet{Zeidler1985IIb}).  This requires proving that the operator in \cref{eqn:phi_eqn_var_semi_disc} with the setting of a real, reflexive Banach space~\footnote{Sobolev spaces considered in this manuscript are real and reflexive.} is \textit{\textbf{pseudo-monotone, bounded, and coercive}}.  The two main differences in the time-scheme in~\citep{Khanwale2020} and this manuscript are; 1) fully-coupled nature of solving  \crefrange{eqn:nav_stokes_var_semi_disc}{eqn:phi_eqn_var_semi_disc} and Crank-Nicolson averages of mixture density ($\rho$) and mixture viscosity ($\eta$) in \cref{eqn:nav_stokes_var_semi_disc}.  Both these changes do not affect the strategy discussed above in major ways and the proof remains largely unchanged with minor changes.  This is one of the advantages of our scheme here -- the aforementioned technique of proving existence (solvability) of the semi-discrete system is applicable here.  The proof that operator in \cref{eqn:phi_eqn_var_semi_disc} is \textit{\textbf{pseudo-monotone, bounded, and coercive}}  largely remains unchanged from~\citet{Khanwale2020}.  

It is important to note that the technique initially demonstrated by~\citet{Han2015} for linear operators and then modified for non-linear operators in~\citet{Khanwale2020} allows us to show solvability for a fully-nonlinear schemes.  However, proving uniqueness of the solution of \cref{eqn:phi_eqn_var_semi_disc} remains difficult, but for practical situations we do not see any problems. 

\subsection{Spatial discretization and the variational multiscale approach}
\label{subsec:space_scheme}
The unknowns:
\begin{equation}
\left( \phi, \, \mu,  \, \vec{v}, \, p \right),
\end{equation}
are all discretized in space using continuous Galerkin finite elements
with piecewise polynomial approximations. It is well-known from literature 
that approximating the velocity,
 $\vec{v}$, and the pressure, $p$, with the same polynomial order 
 leads to numerical instabilities as this violates the discrete inf-sup condition  or Ladyzhenskaya-Babuska-Brezzi condition (e.g., see \citet[page 31]{ Volker2016}).  A popular method to overcome this difficulty is to add
 stabilization terms to the evolution equations that transform the
 inf-sup stability condition to a coercivity statement \citep{article:TezMitRayShi92}. In particular, a large class of stabilization techniques derive from the so-called {\it variational multiscale} approach (VMS)~\citep{ hughes2018multiscale}, which generalizes the well-known 
 SUPG/PSPG~\citep{ article:BrooksHughes1982} method in the context of large-eddy simulation (LES)~\citep{ Hughes1995}.  %This approach provides a stabilization mechanism such that the inf-sup stability condition is converted to a coercivity condition. 
Additionally, VMS provides a natural leeway into modeling high-Reynolds number flows~\citep{ Bazilevs2007}.       
  
The VMS approach uses a direct-sum decomposition of the function spaces as follows.  If $\vec{v} \in \vec{V}$, $p \in Q$, and $\phi \in Q$ then we decompose these spaces as: 
\begin{eqnarray}
\vec{V} = \overline{\vec{V}} \oplus \vec{V}^\prime \qquad \text{and} \qquad Q = \overline{Q} \oplus Q^\prime,
\end{eqnarray}
where $\overline{\vec{V}}$ and $\overline{Q}$ are the cG(r) subspaces of $\vec{V}$ and $Q$, respectively, and the primed versions are the complements of the cG(r) subspaces in $\vec{V}$ and $Q$, respectively.
We decompose the velocity and pressure as follows: 
\begin{equation}
\vec{v} = \overline{\vec{v}} + \vec{v}^{\prime}, \quad
\phi = \overline{\phi} + \phi^{\prime}, \quad \text{and}
\quad p = \overline{p} + p^{\prime},
\end{equation} 
where the {\it coarse scale} components are
  $\overline{\vec{v}} \in \overline{\vec{V}}$ and  $\overline{p}, \overline{\phi}\in \overline{Q}$, and the {\it fine scale} components are
   $\vec{v}^{\prime} \in \vec{V}^\prime$ and $p^{\prime},\phi^{\prime} \in Q^\prime$.  We define a projection operator, $\mathscr{P}:\vec{V} \rightarrow \overline{\vec{V}}$, such that
 $\overline{\vec{v}} = \mathscr{P}\{\vec{v}\}$ and $\vec{v}^\prime = \vec{v} - \mathscr{P}\{\vec{v}\}$.  A similar operator can decompose $p$ and $\phi$.  

Substituting these decompositions in the original variational form  
\cref{defn:variational_form_sem_disc} yields:
\begin{align}	
\begin{split}	
		\text{Momentum Eqns:}& \quad \left(w_i,\rho(\phi)\pd{ \overline{v_i}}{t}\right) + \left(w_i,
		\pd{\left(\rho(\phi) v_i^{\prime}\right)}{t}\right) +  \left(w_i,\rho(\phi)\overline{v_j}\pd{\overline{v_i}}{x_j}\right) \\ & + \left(w_i,\rho(\phi)v_j^\prime\pd{\overline{v_i}}{x_j}\right) +
		\left(w_i,\pd{\left(\rho(\phi)\overline{v_j}v_i^\prime\right)}{x_j}\right) + \left(w_i,\pd{\left(\rho(\phi)v_j^\prime v_i^\prime\right)}{x_j}\right) \\ & +  
		\frac{1}{Pe}\left(w_i, J_j\pd{\overline{v_i}}{x_j}\right) + \frac{1}{Pe}\left(w_i, \pd{\left(J_j v_i^\prime\right)}{x_j}\right) + \frac{Cn}{We} \left(w_i,\pd{}{x_j}\left({\pd{\phi}{x_i}\pd{\phi}{x_j}}\right)\right)  \\
		& + \frac{1}{We}\left(w_i,\pd{\left(\overline{p} + p^{\prime}\right)}{x_i}\right) + \frac{1}{Re}\left(\pd{w_i}{x_k},\eta(\phi)\pd{\left(\overline{v_i} + v_i^{\prime}\right)}{x_k}\right) - \left(\frac{w_i,\rho(\phi)\hat{g_i}}{Fr}\right)   \\
		&+  \left(q,\pd{\overline{v_i}}{x_i}\right) + \left(q,\pd{v_i^\prime}{x_i}\right)= 0, \label{eqn:weak_VMS_ns}
		\end{split}\\
		%
		%\text{Cahn-Hilliard Eqn:}& \quad \left(q, \pd{\phi}{t}\right) + \left(q,\pd{\left(\overline{v_i}\phi\right)}{x_i}\right)%+\left(q,\pd{\{v_i^\prime\phi\}}{x_i}\right) 
		%- \frac{1}{PeCn} \left(q, \frac{\partial^2 \left(m(\phi)\mu\right)}{\partial x_i \partial x_i}\right) = 0, \label{eqn:phi_eqn_var_decom}\\
		\text{Cahn-Hilliard Eqn:}& \quad \left(q, \pd{ \left(\overline{\phi} 
			+ \phi^{\prime} \right)}{t}\right) 
		- \left(\pd{q}{x_i}, \overline{v_i}\overline{\phi} \right)  
		- \left(\pd{q}{x_i},v_i^\prime \overline{\phi}\right) 
		- \left(\pd{q}{x_i},\overline{v_i} \phi^{\prime}\right) \\ &
		- \frac{1}{PeCn} \left(\pd{q}{x_i}, m(\phi)\frac{\partial \mu}{\partial x_i}\right) = 0, \label{eqn:phi_eqn_var_decom}\\
		\text{Chemical Potential:}& \quad -\left(q,\mu\right) + \left(q, \d{\psi}{\phi}\right) - Cn^2 \left(q, \pd{}{x_i}\left({\pd{\phi}{x_i}}\right)\right)   = 0, \label{eqn:mu_eqn_var_decom}
\end{align} 
%where $\vec{w},\vec{\overline{v}}, \in \mathscr{P}\vec{H}^{1}(\Omega)$, $\overline{p},\phi \in \mathscr{P}H^1(\Omega), \vec{v}^\prime \in (\mathscr{I} - \mathscr{P})\vec{H}^{1}(\Omega)$,
%$p^\prime \in (\mathscr{I} - \mathscr{P})H^1(\Omega)$, and $\mu, q \in \mathscr{P}H^1(\Omega)$.
%
% where $\vec{w},\vec{\overline{v}}, \in \mathscr{P}\vec{H}^{r}(\Omega)$, $\overline{p},\;\overline{\phi} \in \mathscr{P}H^r(\Omega), \vec{v}^\prime \in (\mathscr{I} - \mathscr{P})\vec{H}^{r}(\Omega)$,
% $\phi^\prime,p^\prime \in (\mathscr{I} - \mathscr{P})H^r(\Omega)$, and 
% $\mu, q \in \mathscr{P}H^r(\Omega)$.
where $\vec{w} \in \mathscr{P}\vec{H}^{r}(\Omega)$, $\vec{\overline{v}}\in L^2\left(0, T ;\; \mathscr{P}\vec{H}^{r,h}_0(\Omega)\right)$, $\overline{p},\;\overline{\phi},\;\mu \in L^2\left(0, T ;\; \mathscr{P}H^r(\Omega)\right), \vec{v}^\prime \in (\mathscr{I} - \mathscr{P})\vec{H}^{r}(\Omega)$,
$\phi^\prime,p^\prime \in (\mathscr{I} - \mathscr{P})H^r(\Omega)$, and 
$q \in \mathscr{P}H^r(\Omega)$.
Here $\mathscr{I}$ is the identity operator and $\mathscr{P}$ is the projection operator.  We use the residual-based approximation proposed in~\citet{ Bazilevs2007} for the fine-scale components, applied to a two-phase system~\cite{Khanwale2020}, to close the equations:  
\begin{equation}
\rho(\phi) v_i^\prime = -\tau_m \mathcal{R}_m(\rho,\overline{v_i},\overline{p}), \qquad 
p^\prime = -\rho(\phi)\tau_c \mathcal{R}_c(\overline{v_i}), \qquad \text{and} \qquad
\phi^\prime = - \tau_{\phi} \mathcal{R}_{\phi}(\overline{v_i}, \phi).
\end{equation}

%It is important to note that because we are using block iterative method, the momentum equations,  \cref{eqn:weak_VMS_ns}, and the Cahn-Hilliard equations, \cref{eqn:phi_eqn_var_decom} and \cref{eqn:mu_eqn_var_decom},
% are solved as two different nonlinear sub-problems. We use conforming Galerkin based finite elements, and replace the continuous spaces with their discrete counterparts; notice that as we only solve for course scale components, the trial functions and the basis functions are in the same space. Then we can write a discrete variational formulation can we written as follows.
We substitute the infinite-dimensional spaces with their discrete counterparts (superscript $h$) using conforming Galerkin finite elements where the
the trial and test functions are taken from the same spaces.
Note that we only solve for the coarse-scale components. 
The resulting discrete variational formulation can then be defined as follows.
\begin{definition} {}{eqn:weak_VMS_disc} 
	%Find $\vec{\overline{v}}^h \in \mathscr{P}\vec{H}^{r,h}_0(\Omega)$ and $\overline{p}^h, \overline{\phi}^h, \mu^h\in\mathscr{P}H^{r,h}(\Omega)$ such that
 Find $\vec{\overline{v}}^h \in L^2\left(0, T ;\; \mathscr{P}\vec{H}^{r,h}_0(\Omega)\right)$ and $\overline{p}^h, \overline{\phi}^h, \mu^h\in L^2\left(0, T ;\; \mathscr{P}H^{r,h}(\Omega)\right)$ such that
	\begin{align}
	\begin{split}
	% line 1
	\text{Momentum:}&  \quad
	\left(w_i,\overline{\rho}^h\pd{\overline{v_i}^h}{t}\right) + \left(w_i,\overline{\rho}^h\overline{v_j}^h\pd{\overline{v_i}^h}{x_j}\right) - \left(w_i,\tau_m \mathcal{R}_m\left(\overline{v_j}^h,\overline{p}^h\right)\pd{\overline{v_i}^h}{x_j}\right) \\ 
	% line 2
	&+ \left(\pd{w_i}{x_j},\overline{v_j}^h\left(\tau_m 
	\mathcal{R}_m\left(\overline{v_i}^h,\overline{p}^h\right)\right)\right) -\left(\pd{w_i}{x_j},\frac{\tau_m^2}{\overline{\rho}^h} \, \mathcal{R}_m\left(\overline{v_j}^h,\overline{p}^h\right) \mathcal{R}_m\left(\overline{v_i}^h,\overline{p}^h\right)\right) \\
	% line 3
	&+
	\frac{1}{Pe}\left(w_i, J_j^h\pd{\overline{v_i}^h}{x_j}\right) + \frac{1}{Pe}\left(\pd{w_i}{x_j}, J_j^h \frac{\tau_m}{\overline{\rho}^h} \, \mathcal{R}_m\left(\overline{v_i}^h,\overline{p}^h\right)\right)  \\ 
	% line 4
	&-\frac{Cn}{We} \left(\pd{w_i}{x_j}, \, {\pd{\overline{\phi}^h}{x_i}\pd{\overline{\phi}^h}{x_j}} \right) - 
	\frac{1}{We}\left(\pd{w_i}{x_i},\overline{p}^h\right) \\
	% line 5
	&+ \frac{1}{We}\left(\pd{w_i}{x_i},\overline{\rho}^h \tau_c \mathcal{R}_c\left(\overline{v_i}^h\right) \right) 
	+ \frac{1}{Re}\left(\pd{w_i}{x_k},\overline{\eta}^h\pd{\overline{v_i}^h}{x_k}\right) 
	- \left(\frac{w_i,\overline{\rho}^h\hat{g_i}}{Fr}\right)  = 0,
	\label{eqn:nav_stokes_eqn_var_decom}
	\end{split} \\
	\text{Thermo:} & \quad J^h_i = \frac{\left(\rho_- - \rho_+ \right)}{2\;\rho_+ Cn} \, \pd{\mu^h}{x_i}, \label{eqn:weak_thermo_consistency_var_decom} \\
	\text{Solenoidality:}& \quad \left(q,\pd{\overline{v_i}^h}{x_i}\right) + \left(\pd{q}{x_i},\frac{\tau_m}{\overline{\rho}^h}\mathcal{R}_m\left(\overline{v_i}^h,\overline{p}^h\right)\right)= 0, \\ 
	\begin{split}
	\text{Cahn-Hilliard:}& \quad \left(q, \pd{\overline{\phi}^h}{t}\right)
	- \left(\pd{q}{x_i}, \overline{v_i}^h\overline{\phi}^h \right) + \left(\pd{q}{x_i},\frac{\tau_m}{\overline{\rho}^h}\mathcal{R}_m\left(\overline{v_i}^h,\overline{p}^h\right) \overline{\phi}^h\right) \\ &
	+ \left(\pd{q}{x_i},\overline{v_i}^h \tau_{\phi} \mathcal{R}_{\phi}\left(\overline{v_i}^h, \overline{\phi}^h\right) \right)
	- \frac{1}{PeCn} \left(\pd{q}{x_i}, \overline{m}^h\frac{\partial \mu^h}{\partial x_i}\right) = 0, \label{eqn:phi_eqn_var_decom_rb}
	\end{split}\\
	\text{Potential:}& \quad -\left(q,\mu^h\right) + \left(q, \d{\psi}{\overline{\phi}^h}\right) + Cn^2 \left(\pd{q}{x_i},{\pd{\overline{\phi}^h}{x_i}}\right)  = 0,
	\label{eqn:mu_eqn_var_decom_disc}
	\end{align}
	$\forall \vec{w} \in \mathscr{P}\vec{H}^{r,h}_0(\Omega)$ and
	 $\forall q \in \mathscr{P}H^{r,h}(\Omega)$, and time $t \in [0, T]$.
		\label{eqn:weak_VMS_disc}
\end{definition}

In the above expressions, we used the following notation:
\begin{equation}
\overline{\rho}^h := \rho\left(\overline{\phi}^h\right), \quad
\eta^h := \eta\left(\overline{\phi}^h\right), \quad
\text{and} \quad
\overline{m}^h:=m\left(\overline{\phi}^h\right),
\end{equation}
and the following parameter values:
\begin{equation}
\begin{split}
\tau_m &= \left( \frac{4}{\Delta t^2}  + \overline{v_i}^hG_{ij}\overline{v_j}^h + \frac{1}{\overline{\rho}^h \, Pe}\overline{v_i}^hG_{ij}\overline{J_j}^h 
+ C_{I} \left(\frac{\overline{\eta}^h}{\overline{\rho}^hRe}\right)^2 G_{ij}G_{ij}\right)^{-1/2},\\
\tau_c &=  \frac{1}{tr(G_{ij})\tau_m}, \quad
\tau_\phi = \left( \frac{4}{\Delta t^2}  + \overline{v_i}^hG_{ij}\overline{v_j}^h + C_{\phi}\left(\frac{1}{PeCn}\right)^2 G_{ij}G_{ij}\right)^{-1/2}.
\end{split}
\end{equation}
Here we set $C_{I}$ and $C_{\phi}$ for all our simulations to 6 and the residuals are given by
\begin{equation}
\begin{split}
\mathcal{R}_m\left(\overline{v_i}^h,\overline{p}^h\right) &= \overline{\rho}^h\pd{\overline{v_i}^h}{t} + \overline{\rho}^h\overline{v_j}^h\pd{\overline{v_i}^h\;}{x_j} + 
\frac{1}{Pe}J_j^h\pd{\overline{v_i}^h}{x_j} + \frac{Cn}{We} \pd{}{x_j}\left({\pd{\overline{\phi}^h}{x_i}\pd{\overline{\phi}^h}{x_j}}\right) \\ &+ 
\frac{1}{We}\pd{\overline{p}^h}{x_i} - \frac{1}{Re}\pd{}{x_k}\left({\overline{\eta}^h\pd{\overline{v_i}^h}{x_k}}\right) - \frac{\overline{\rho}^h\hat{g}}{Fr},  \\ 
\mathcal{R}_c\left(\overline{v_i}^h\right) &= \pd{\overline{v_i}^h}{x_i}, \quad
%\mathcal{R}_\phi(\overline{v_i^h},\phi,\mu^h) = \pd{\phi}{t} + \pd{\{\overline{v_i^h}\;\phi\}}{x_i} - %\frac{1}{PeCn} \pd{}{x_i}\left({\pd{\{m(\phi)\mu^h\}}{x_i}}\right) - f_{\overline{\phi}}(x_i,t). 
\mathcal{R}_\phi\left(\overline{v_i}^h,\overline{\phi}^h,\mu^h\right) = \pd{\overline{\phi}^h}{t} + \pd{\left(\overline{v_i}^h\;\overline{\phi}^h\right)}{x_i} - \frac{1}{PeCn} \pd{}{x_i}\left(\overline{m}^h{\pd{\mu^h}{x_i}}\right).
\end{split}
\end{equation}

%Finally, we note that in the above expressions the time derivative is still continuous.  In the fully discrete numerical method we replace the time-derivatives in the momentum and phase field equations using
%the trapezoidal rule in the form of the scheme presented in \cref{eqn:disc_nav_stokes_semi} -- \cref{eqn:disc_time_phi_eqn_semi}.
The following assumptions were made in the above variational problem. 
\begin{enumerate}
	\item $\vec{v}^{\prime} = 0$ on the boundary $\partial\Omega$; similarly $\phi^{\prime} = 0$ on  $\partial\Omega$.
	\item $\left(\pd{w_i}{x_k},\overline{\eta}^h\pd{v_i^\prime}{x_k}\right) = 0$ from the orthogonality condition of the projector.  The projector utilizes the inner product that comes from the bilinear form of these viscous terms~\citep{Hughes2007,Bazilevs2007}.
	\item  We assume that $\left(\pd{q}{x_i},v_i^{\prime} \phi^{\prime}\right) = 0$ under the reasoning that fluctuations in $\phi^\prime$ are small compared to $v_i^\prime$. This significantly simplifies the formulation.
	\item We use the coarse-scale part of the VMS decomposition of $\phi$ to compute the mixture density and mixture viscosity. We use the pulled back
	$\phi^{*}$ (see \cref{rmk:phi_pullback}) for this calculation, which regularizes 
	$\phi$ by smoothing out overshoots and undershoots.  
	The pull back ensures $\phi^{*} \in H^{r}$, and it's projection on the mesh $\phi^{*,h} \in H^{r,h}$ as required for the cG formulation. 
\end{enumerate}
\begin{remark}
	The above formulation is written for a generic order ($r$) for the interpolating polynomials (basis functions). %Nevertheless, we restrict our attention to $r = 1$ for the numerical experiments in this paper.  
\end{remark}
Finally, the time derivative in the above set of expressions is still continuous.  In the fully discrete numerical method we substitute the time-derivatives in the momentum and phase field equations using the time scheme presented in~\cref{eqn:nav_stokes_var_semi_disc} -- \cref{eqn:phi_eqn_var_semi_disc}.

\subsection{Handling non-linearity}
\label{subsec:newton_iter}
The fully discrete version of \cref{eqn:nav_stokes_var_semi_disc} --  \cref{eqn:phi_eqn_var_semi_disc} represents a non-linear system of algebraic equations that discretize the CHNS system \cref{eqn:nav_stokes_eqn_var_decom} -- \cref{eqn:mu_eqn_var_decom_disc} and that must be solved in each time-step. 
Symbolically, we can write this nonlinear algebraic system as
\begin{equation}
F_i\left(U^k_1, U^k_2, \dots, U^k_n\right) = 0,
\end{equation} 
where $\vec{U}^{k}$ is a vector containing all of the degrees of freedom
at the discrete time $t^k$.
In order to solve this nonlinear algebraic system we make use of a Newton
method, which requires us to solve the following linear system in each 
Newton iteration:
\begin{align}
J_{ij}^{s,k} \; \delta U_j^{s,k} = -F_i\left(U_1^{s,k}, U_2^{s,k}, 
\dots, U_n^{s,k} \right), \quad 
\quad J_{ij}^{s,k} := \pd{}{U_j} F_i\left(U_1^{s,k}, U_2^{s,k}, \dots, U_n^{s,k}\right),
\label{eqn:newton_linear_system}
\end{align}
where  $U_j^{s, k}$ is the vector containing all the degrees of freedom at the $k^{\text{th}}$ time step and at the
$s^{\text{th}}$ Newton iteration. $\delta U_j^{s, k}$ is the ``perturbation"  (update) vector that will be used to update the current Newton iteration guess:
\begin{align}
U_j^{s+1, k} = U_j^{s, k} + \delta U_j^{s, k}.
\end{align}
 $J_{ij}^{s,k}$ is the Jacobian matrix, which we analytically compute by calculating the variations (partial derivatives) of the operators with respect to the degrees of freedom.  The calculation of $J_{ij}^{s,k}$ is more challenging for the fully-coupled approach compared to the block iterative technique in~\citet{ Khanwale2020}.  
The iterative procedure begins with an initial guess, which we simply take
as the solution from the previous time-step:
\begin{equation}
U_i^{0,k} = U_i^{k-1},
\end{equation}
and ends once we reach the desired tolerance:
\begin{equation}
\|\delta U_j^{s,k}\| \le \text{TOL}.
\end{equation}
Once this tolerance is reached we set $U_j^{k} = U_j^{s,k}$ and move on to the next time-step.

% In the time-stepping context the solution vector at the previous time step can be used to initiate the Newton iteration at each time step. Here \cref{eqn:newton_linear_system} is the linear system which has to be solved at each Newton iteration on a massively parallel scale.

In this work we solve linear system \cref{eqn:newton_linear_system} 
in each Newton iteration on a massively parallel architecture.
In particular, we make use of the  {\sc petsc}  library, which provides efficient parallel implementations of the above ideas along with an extensive suite of preconditioners and solvers for the linear system~\citep{petsc-efficient, petsc-web-page, petsc-user-ref}.  The precise choice of linear solvers and preconditioners is different for different numerical experiments; and therefore,
we provide more details on these choices in the numerical experiment sections of this paper. 

% !TEX root = methods_paper_withpdfFigures.tex

\section{Octree based domain decomposition}
\label{sec:octree_mesh}
Octrees are widely used in the computational sciences
to represent dynamically-adapted hierarchical meshes ~\cite{ SundarSampathBiros08, BursteddeWilcoxGhattas11, Fernando2018_GR, Fernando:2017}; this
 is largely due to their conceptual simplicity and their 
 ability to scale across a large number of processors. %For simplicity, in this work we use axis-aligned octrees during the mesh generation process. 
 Adaptivity is crucial in the computational sciences, where in many cases it reduces the overall degrees of freedoms (problem size), making these simulations feasible on currently available computers. The use of adaptive discretizations can introduce additional challenges, especially in distributed computing, such as load-balancing, low-cost scalable mesh generation, and mesh partitioning. Thus, we use Dendro, a highly scalable parallel octree library, to generate full adaptive quasi-structured octree based meshes and partitions. In the following sections, we summarize how Dendro allows us to perform our numerical simulations. The reader can find a detailed account of the algorithms used in Dendro in~\cite{ Fernando:2017, Fernando2018_GR}.

\subsection{Octree construction and 2:1 balancing} 
Dendro refines an octant based on user-specified criteria proceeding in a top-down fashion. The user defines the refinement criteria by a function that takes the coordinates of the octant, and returns \texttt{true} or \texttt{false}. Since the refinement happens locally to the element, this step is embarrassingly parallel. In distributed-memory machines, the initial top-down tree construction enables an efficient partitioning of the domain across an arbitrary number of processes. All processes start at the root node (i.e., the cubic bounding box for the entire domain). We perform redundant computations on all processes to avoid communication during the refinement stage. Starting from the root node, all processes refine (similar to a sequential implementation) until the process produces at least $\mathcal{O}(p)$ octants requiring further refinement. The procedure ensures that upon partitioning across $p$ processors, each processor gets at least one octant. Then using a space-filling-curve (SFC) based partitioning, we partition the octants across $p$ partitions~\cite{ Fernando:2017}. Once the algorithm completes this partitioning, we can restrict the refinement criterion to a processor's partition, which we can re-distribute to ensure load-balancing. % with $P$, where $p< P$ partitions, according to the adaptive grid generated. 
We enforce a condition in our distributed octrees that no two neighbouring octants differ in size by more than a factor of two (2:1 balancing). This ratio makes subsequent operations simpler without affecting the adaptive properties. Our balancing algorithm uses a variant of \textsc{TreeSort}~\cite{ Fernando:2017} with top-down and bottom-up traversal of octrees which is different from existing approaches~\cite{ bern1999parallel, BursteddeWilcoxGhattas11, SundarSampathAdavaniEtAl07}. 

\subsection{SFC-based octree partition} 
The refinement and subsequent two-to-one balancing of the octree procedures can result in a non-uniform distribution of elements across processes, leading to load imbalance. This imbalance is particularly challenging when meshing complex geometries. SFC induces a partial ordering operator in higher dimensional space, where \textsc{ TreeSort}~\cite{ Fernando:2017} performs a parallel sort operation on the octants. The SFC traverses the octants in the sorted order, which reduces the partitioning problem to partitioning a 1D curve. Finally, we use a Hilbert SFC curve based partitioning compared to traditionally used Morton (Z-curve), which produces superior partitions in large scale computations~\cite{ Fernando:2017}.
%The clustering properties of SFC is crucial when comes to the quality of the partitioning for following numerical computations. 

\subsection{Mesh generation}  

By meshing, we refer to the construction of the data structures required to perform numerical computations on topological octree data. \dendro~builds distributed data structures to perform finite difference (FD), finite volume (FV), and finite element (FE) computations. In this work, we use the FE data structures. One of the key steps of the mesh generation stage is to construct neighborhood information for octants, which uses two maps computed by~\dendro. The first map \oTo~determines the neighboring octants of a given octant, and the map \oTn~computes the nodes corresponding to a given octant. We generate the \oTo~map by performing parallel searches similar to approaches described in~\cite{ Fernando2018_GR}. Assuming we have $n$~octants per partition, the search operations and building required to create the \oTo~and \oTn~data structures have $\mathcal{O}(n\log(n))$ and $\mathcal{O}(n)$ complexity, respectively. 
%Computation of the \oTn~ map is a local operation, where we start with octant local nodes (with duplicate nodes for shared octant boundaries), and eliminating duplicate nodes with a globally well defined criteria such as SFC ordering of octants.

\subsection{Handling hanging nodes}

 While the use of quasi-structured grids such as octree-grids makes parallel meshing scalable and efficient without sacrificing adaptivity, one challenge is to handle the resulting non-conformity efficiently. The resulting {\em hanging nodes} occur on faces/edges shared between unequal elements. These hanging nodes do not represent independent degrees of freedom. We do not store the hanging nodes in~\dendro~to minimize the memory footprint and to improve the overall efficiency. The polynomial order of the elements and the free (non-hanging) nodes on the face/edge determine the value of the function at the hanging nodes. Therefore, we introduce these extra degrees of freedom as temporary variables before elemental matrix assembly or matrix-vector multiplication (for matrix-free computations) and eliminate them following the elemental operation. This virtualization of the hanging nodes is straightforward as we limit the meshes to a two-to-one balance, which limits the number of overall cases we need to consider explicitly see~\cite{ Fernando2018_GR} for further details on the handling of hanging nodes in \dendro. 
 
\subsection{Re-meshing and interpolation}

An essential requirement is to adapt the spatial mesh as the fluid interface moves across the domain. Figure~\ref{fig:rt3d_mesh_At15} shows the adaptive mesh refinement following the deformation of the interface in a Rayleigh-Taylor instability.  In distributed-memory systems, this localized meshing requires a re-partition and re-balance of the load. Thus, after a few time steps, we re-mesh. This re-meshing step is similar to the initial mesh generation and refinement. Now, the process uses the current position of the interface as well as the original geometry. The two-to-one balance enforcement and meshing follow this mesh generation. We now transfer the velocity field from the old mesh to the new mesh using a simple interpolation process. That is, the grid transfer only happens between parent and child (for coarsening and refinement) as it otherwise remains unchanged.  Therefore, the transfer from the old mesh to the new one uses standard polynomial interpolation, followed by a simple re-partitioning based on the new mesh.

%Note that we do not communicate the octants as every process has a copy of the octants, and all that needs to be done at each process is to retain a subset of the current octants corresponding to its sub-domain. This allows us to have excellent scalability, as all processes perform (roughly) the same amount of work without requiring any communication.

% \section{Octree based finite element discretisation and remeshing}
% \label{sec:octree_mesh}
% % \begin{itemize}
% % 	\item Dr. Hari's section
% % 	\item mesh adaptivity and interpolation.
% % \end{itemize}
% \input{dendro.tex}
% !TEX root = methods_paper_withpdfFigures.tex

\section{Numerical experiments}
\label{sec:num_exp}

%\subsection{Convergence via the method of manufactured solutions}
%We performed convergence analysis via the method manufactured solutions for a case with quad elements in 2D.  Because these results are not in 3D, we present them in \ref{sec:manufac_sol}.

\subsection{2D simulations: manufactured solutions}
\label{subsec:manfactured_soln_result}

We use the method of manufactured solutions to assess the convergence properties of our method.  We select an input ``solution'' which is solenoidal, and substitute it in the full set of governing equations. We then use the residual as a body force on the right-hand side of~\crefrange{eqn:nav_stokes_var_semi_disc}{eqn:phi_eqn_var_semi_disc}.
%The idea of this approach  is to input a ``solution'' that satisfies solenoidality, but not necessarily the full set of evolution equations. 
%Instead, the residual from plugging this ``solution'' into the full Cahn-Hilliard Navier-Stokes system becomes a forcing term on the right-hand side of \cref{eqn:nav_stokes_var_semi_disc} -- \cref{eqn:phi_eqn_var_semi_disc}.
We choose the following ``solution'' with appropriate body forcing terms: 
\begin{equation}
\begin{split}
\vec{v} &= \left( \pi \sin^2(\pi x_1)\sin(2 \pi x_2)\sin(t), \, -\pi\sin(2\pi x_1)\sin^2(\pi x_2)\sin(t), \, 0 \right), \\
p &= \cos(\pi x_1)\sin(\pi x_2)\sin(t), \quad 
\phi = \mu = \cos(\pi x_1)\cos(\pi x_2)\sin(t).
% \mu &= \cos(\pi x_1)\cos(\pi x_2)\sin(t).
\end{split}
\label{eq:manufac_exact}
\end{equation}
Our numerical experiments use the following non-dimensional parameters: $Re = 10$, $We = 1$, $Cn = 1.0$, $Pe = 3.0$, and $Fr = 1.0$. The density ratio is set to $\rho_{-}/\rho_{+} = 1.0$.%$\rho_{-}/\rho_{+} = 0.85$. 

For the first experiment we use a 2D uniform mesh with $512 \times 512$~%$300 \times 300$ 
elements with quadratic polynomials. %bilinear elements (quads).  
Panel~(a) of~\cref{fig:manufac_temporal_convergence} shows the temporal convergence of the $L^2$ errors (numerical solution compared with the manufactured solution) calculated at $t = \pi$ to allow for one complete time
period. The figure shows the evolution of the error versus time-step on a log-log scale. The errors are decreasing with a slope close to two for the phase-field function $\phi$ and velocity, thereby demonstrating second-order convergence. 
%For velocity the slope on the log-log scale is initially about 2.0, but then
%slightly tapers off for smaller time-steps; we expect this tapering off at smaller time-steps due to the fact that we used a fixed mesh and at some point the spatial errors dominate. %This can also be seen in \cref{tab:temporal_error_table} which shows the errors and rate of change of errors for varying time step for velocity and $\phi$.

We next conduct a spatial convergence study. We fix the time step at $\delta t = 10^{-3}$, and vary the spatial mesh resolution. Panel~(b) of~\cref{fig:manufac_temporal_convergence} shows the spatial convergence of $L^2$ errors (numerical solution compared with the manufactured solution) at $t = \pi$. %\Cref{tab:spatial_error_table} shows the errors and the rate of change of errors for varying spatial grid spacing for velocity and $\phi$; 
We observe second order convergence for both velocity and $\phi$.
%We observe second order convergence for $\phi$, and a slope of around 2.0 for velocity.

Panel~(c) of~\cref{fig:manufac_temporal_convergence} shows mass conservation for an intermediate resolution simulation with $\delta t=10^{-3}$ and $300 \times 300$ elements. We plot mass drift:
\begin{equation}
\int_{\Omega} \phi\left(\vec{x},t\right) \, d\vec{x} 
- \int_{\Omega}\phi\left(\vec{x},t=0\right) \, d\vec{x},
\end{equation}
and expect this value to be close to zero as per the theoretical prediction of~\cref{prop:mass_conservation}. We observe excellent mass conservation with fluctuations of the order of $10^{-12}$, which is to be
expected in double precision arithmetic. Here we used a relative tolerance of $10^{-7}$ for the Newton iteration.  For the linear solves within each Newton iteration we used a relative tolerance of $10^{-7}$.  
 
\begin{figure}
	\centering
	\includegraphics[width=0.42\linewidth]{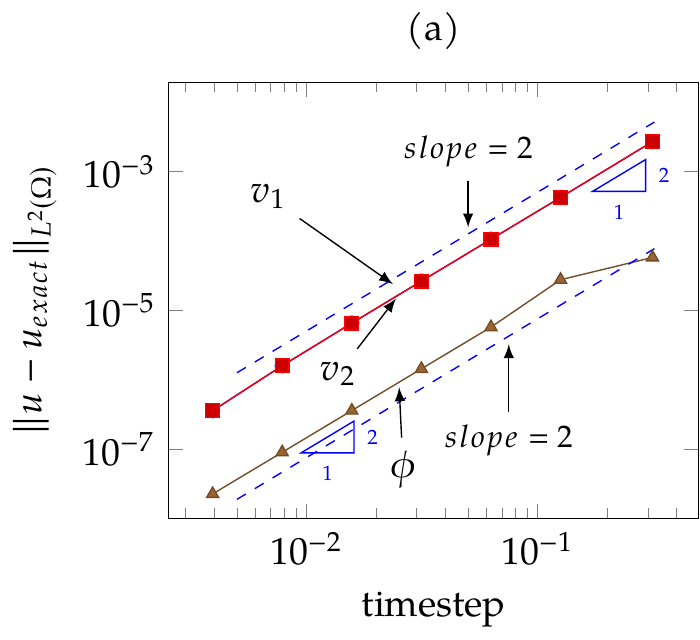}
	\includegraphics[width=0.42\linewidth]{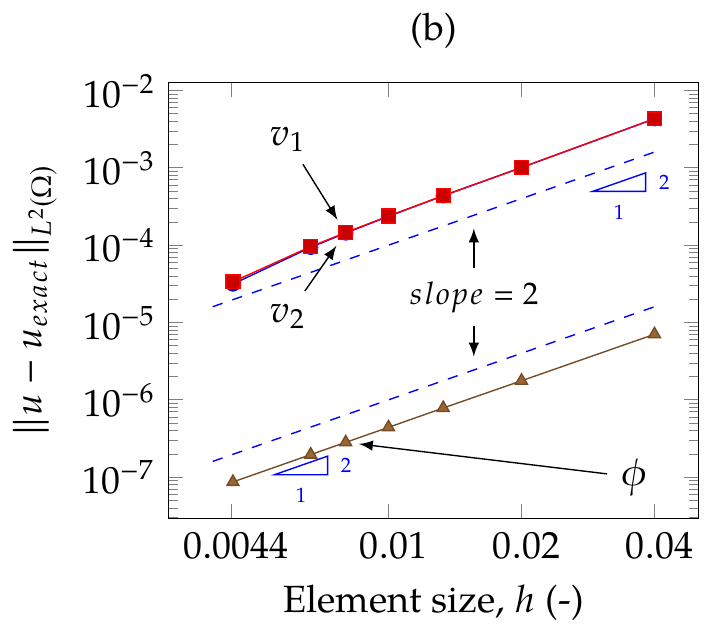}
	
	\includegraphics[width=0.45\linewidth]{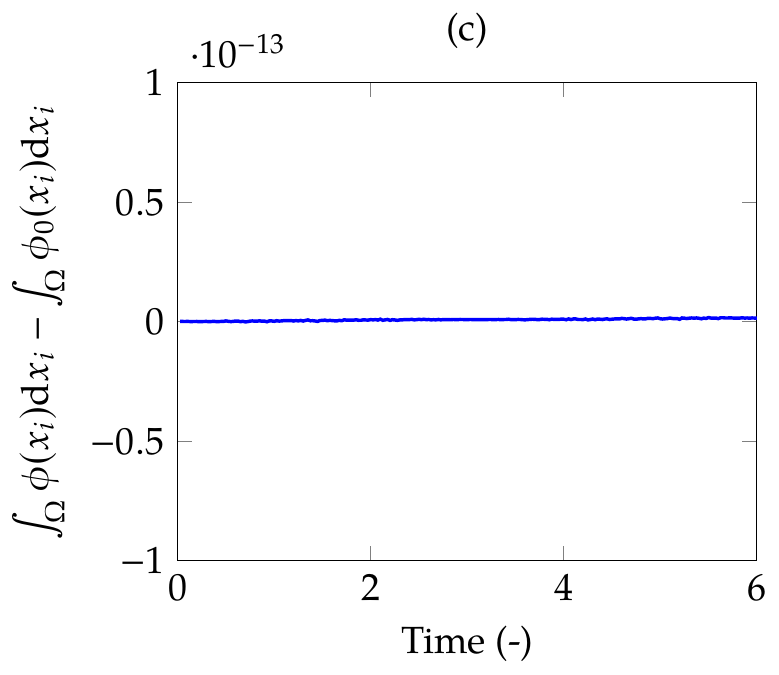}
	\caption{\textit{Manufactured Solution Examples} (\cref{subsec:manfactured_soln_result}). Shown in the Panels are
		(a) the temporal convergence of the numerical scheme for the case of manufactured solutions; (b) the spatial convergence of numerical scheme for the manufactured solutions with a time step of $10^{-3}$; and (c) the mass conservation for the case of manufactured solutions using $300 \times 300$ elements with time step of $10^{-2}$.
	} 
	\label{fig:manufac_temporal_convergence}
\end{figure}

\subsection{2D simulations: single rising bubble}
\label{subsec:single_rising_drop_2D}

To validate the framework, we consider benchmark cases for a single rising bubble in a quiescent water channel \citep{ Hysing2009, Aland2012, Yuan2017}.  
%We next illustrate the framework using a canonical case of a single air bubble rising in a quiescent channel of water. 
%There have been several benchmark studies published~\citep{ Hysing2009, Aland2012, Yuan2017} for this case.  
%We start with selecting appropriate scales to non-dimensionalize the problem.  
We set the Froude number ($Fr = u^2/(gD)$) to 1, which fixes the non-dimensional velocity scale to $u = \sqrt{gD}$, where $g$ is the gravitational acceleration, and $D$ is the diameter of the bubble.  This scaling gives a Reynolds number of $\rho_c g^{1/2}D^{3/2}/\mu_c$, where $\rho_c$ and $\mu_c$ are the specific density and specific viscosity of the continuous fluid, respectively. 
The Archimedes number,  $Ar = \rho_c g^{1/2}D^{3/2}/\mu_c$, 
scales the diffusion term in the momentum equation. 
The Weber number here becomes $We = \rho_c g D^2/\sigma$.  We use the density of the continuous fluid to non-dimensionalize: $\rho_{+} = 1$.  The density and viscosity ratios are $\rho_{+}/\rho_{-}$ and $\nu_{+}/\nu_{-}$, respectively. We present results for two standard benchmark cases.  

\Cref{tab:physParam_bubble_rise_2D_benchmarks} shows the parameters and the corresponding non-dimensional numbers for the two cases simulated in this work.  The bubble is centered at $(1,1)$, and since our scaling length scale is the bubble diameter, the bubble diameter for our simulations is 1.  The domain is %selected to be 
$[0,2]\times[0,4]$.  %This choice is determined by the domain selected in the benchmark studies.  
Following the benchmark studies in the literature, we choose the top and bottom wall to have no slip boundary conditions and the side walls to have boundary conditions: $v_1=0$ ($x$-velocity) and $\frac{\partial v_2}{\partial x}=0$ ($y$-velocity). We use the biCGstab (bcgs) linear solver from the PETSc suite along with the Additive Schwarz (ASM) preconditioner for the linear solves in the Newton iterations (see~\cref{subsec:newton_iter}).  We use a time step of $\num{2.5e-3}$ for both the test cases. The convergence criterion for both test cases uses a relative tolerance of $10^{-6}$ for Newton iteration and a relative tolerance of $10^{-7}$ for the linear solves within each Newton iteration.  

\begin{table}[H]
	\centering\normalsize\setlength\tabcolsep{5pt}
	\begin{tabular}{@{}|c|c|c|c|c|c|c|c|c|c|c|c|@{}}
		\toprule
		Test Case  & $\rho_{c}$  & $\rho_{b}$  & $\mu_{c}$  & $\mu_{b}$ & $\rho_{+}/\rho_{-}$ & $\nu_{+}/\nu_{-}$  & $g$ & $\sigma$ & $Ar$ & $We$ & $Fr$ \\
		\midrule
		\midrule
		{$1$}  & {1000}  & {100}  & {10}  &{1.0} & {10}     &{10} & {0.98}    & {24.5}  &  {35}     & {10} & {1.0}    \\
		{$2$}  & {1000}  & {1.0}  & {10}  &{0.1} & {1000}     &{100} & {0.98}    & {1.96}  &  {35}     & {125} & {1.0}    \\
		\bottomrule
	\end{tabular}
	\caption{Physical parameters and corresponding non-dimensional numbers for the 2D single rising drop  benchmarks considered
	in \cref{subsec:single_rising_drop_2D}.}
	\label{tab:physParam_bubble_rise_2D_benchmarks}                            
	%\end{adjustwidth}
\end{table}

\subsubsection{Test case 1}
\label{subsubsec:single_rising_drop_2D_t1}
This test case considers the effect of higher surface tension, and consequently less deformation of the bubble as it rises. %A higher surface tension corresponds to a lower Weber (We) number. 
We compare the bubble shape in~\cref{fig:test_case1} with benchmark quantities presented in three previous studies~\citep{ Hysing2009, Aland2012, Yuan2017}.  We take $Cn=\num{5e-3}$ for this case.  Panel~(a) of~\cref{fig:test_case1} shows a shape comparison against benchmark studies in the literature, and we see an excellent agreement in the shape of the bubble.  Panel~(b) of~\cref{fig:test_case1} shows a comparison of centroid locations with respect to time against benchmark studies in the literature; again, we see an excellent agreement. We can see from the magnified inset in panel (b) of~\cref{fig:test_case1} that as we keep increasing the mesh resolution, the plot approaches the benchmark studies. We see an almost exact overlap between the benchmark and cases with $h = 2/400$ and $h = 2/600$, where $h$ is the size of the element, demonstrating spatial convergence. 

We next check whether the numerical method follows the theoretical energy stability proved in~\cref{thrm:energy_stability}.  We present the evolution of the energy functional defined in \cref{eqn:energy_functional} for test case 1. Panel~(c) of~\cref{fig:test_case1}~shows that the energy is decreasing in accordance with the energy stability condition for all three spatial resolutions of $h = 2.0/200$, $h = 2.0/400$, and $h = 2.0/600$. 

Finally, we check the mass conservation. Panel~(d) shows the total mass of the system minus the initial mass. At all reported spatial resolutions the change in the total mass is of the order of $10^{-8}$, even after 1600 time steps.  The numerical method delivers excellent mass conservation for long time horizons. 

This test case is a good example of a physical system evolving towards a steady state solution (terminal velocity and shape of the bubble).  This test case has also been used for benchmarking numerical schemes in prior literature.  Therefore, we use this example to contrast the time-step advantage of the fully-coupled, non-linear scheme.  Notice from~\cref{tab:timestep_comparison} that the time-step used with fully-coupled non-linear schemes (current work and \citet{Guo2017}) is an order of magnitude larger than the time-step used in linearized block schemes~\citep{Shen2010a,Shen2010b,Shen2015,Chen2016,Zhu2019}.  As mentioned in the introduction, there are many practical applications in bio-microfluidics where such a property is critical. % However, for the cases where the time-steps would naturally be small (dictated by chaotic physics), e.g. instabilities, linearized schemes may be advantageous.    

\begin{table}[H]
	\centering\normalsize\setlength\tabcolsep{5pt}
	\begin{tabular}{@{}|c|c|c|c|@{}}
		\toprule
		Sr. no  & literature  & type of discretisation & time-step used  \\
		\midrule
		\midrule
		{$1$}  & Current work & fully-coupled non-linear & $\num{2.5e-3}$ \\
		{$2$}  & \citet{Zhu2019}  & linearized block  & $\num{3e-4}$  \\
		{$3$}  & \citet{Guo2017}  & fully-coupled non-linear  & $\num{1e-3}$  \\
		{$4$}  & \citet{Chen2016}  & linearized block  & $\num{2e-4}$  \\
		{$5$}  & \citet{Shen2015}  & linearized block  & $\num{1e-4}$  \\
		{$6$}  & \citet{Shen2010a,Shen2010b}  & linearized block  & $\num{1e-4}$  \\
		\bottomrule
	\end{tabular}
	\caption{Comparison of time-step used for in the current paper and literature for test case 1 benchmark considered
		in \cref{subsec:single_rising_drop_2D}.}
	\label{tab:timestep_comparison}                            
	%\end{adjustwidth}
\end{table}

\begin{figure}[H]
	\centering
	\begin{tikzpicture}
	\begin{axis}[width=0.5\linewidth,scaled y ticks=true,xlabel={$\mathrm{x}$},ylabel={$\mathrm{y}$},legend style={nodes={scale=0.65, transform shape}}, ymin=0.0, ymax=4.0, ytick distance=1.0,  xtick={0.0, 1.0, 2}, title={(a)},
	legend style={nodes={scale=0.95, transform shape}, row sep=2.5pt},
	legend entries={present study, \citet{Hysing2009}, \citet{Aland2012}},
	legend pos= north west,
	legend image post style={scale=1.0},
	unit vector ratio*=1 1 1,
	xmin=0, xmax=2, 
	legend image post style={scale=3.0},
	]
	\addplot [only marks,mark size = 0.5pt,color=blue, each nth point=5, filter discard warning=false, unbounded coords=discard] table [x={x},y={y},col sep=comma] {bubbleShape.csv};
	\addplot [only marks,mark size = 0.5pt,color=black,each nth point=3, filter discard warning=false, unbounded coords=discard] table [x={x},y={y},col sep=comma] {Hysing_Re35We10.csv};
	\addplot [only marks,mark size = 0.5pt,color=red,each nth point=1, filter discard warning=false, unbounded coords=discard] table [x={x},y={y},col sep=comma] {Aland_Voight_Re35We10_shape.csv};
	\end{axis}
	\end{tikzpicture}
	\includegraphics[]{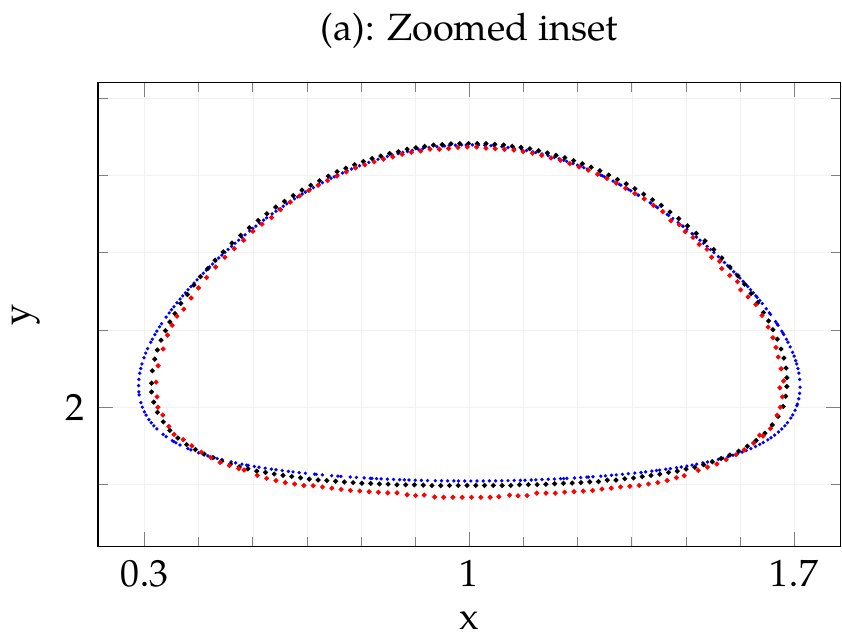}
	
	\begin{tikzpicture}[spy using outlines={rectangle, magnification=3, size=1.5cm, connect spies}]
	\begin{axis}[width=0.45\linewidth,scaled y ticks=true,xlabel={Time (-)},ylabel={Centroid},legend style={nodes={scale=0.65, transform shape}}, xmin=0, xmax=4.2, ymin=0.9, ymax=2.5, ytick distance=0.5,  xtick={0.0, 1.0, 2, 3, 4}, title={(b)},
	legend style={nodes={scale=0.95, transform shape}, row sep=2.5pt},
	legend entries={\citet{Hysing2009}, \citet{Aland2012}, \citet{Yuan2017}, $h = 2.0/200$, $h = 2.0/400$, $h = 2.0/600$},
	legend pos= north west,
	legend image post style={scale=1.0}
	]
	\addplot [line width=0.15mm, color=black] table [x={time},y={centroid},col sep=comma] {Hysing_centroid_Re35We10.csv};
	%%%%%%%%%%%%%%
	\addplot [line width=0.15mm, color=red] table [x={time},y={centroid},col sep=comma] {Aland_Voight_centroid_Re35We10.csv};
	%%%%%%%%%%%%%%
	\addplot [line width=0.15mm, color=ForestGreen] table [x={time},y={centroid},col sep=comma] {Yuan_centroid_Re35We10.csv};
	%%%%%
	\addplot+[mark size = 0.5pt]table [x={time},y={centroid},col sep=comma, each nth point=1, filter discard warning=false, unbounded coords=discard] {centerOfMass_200_400.csv};	
	%%%%%
	\addplot+[mark size = 0.5pt]table [x={time},y={centroid},col sep=comma, each nth point=1, filter discard warning=false, unbounded coords=discard] {centerOfMass_400_800.csv};
	%%%%%
	\addplot+[mark size = 0.5pt]table [x={time},y={centroid},col sep=comma, each nth point=1, filter discard warning=false, unbounded coords=discard] {centerOfMass_600_1200.csv};
	\coordinate (a) at (axis cs:3.6,2.0);
	\end{axis}
	\spy [black] on (a) in node  at (5,1.2);
	\end{tikzpicture}
	\hskip 5pt
	\includegraphics{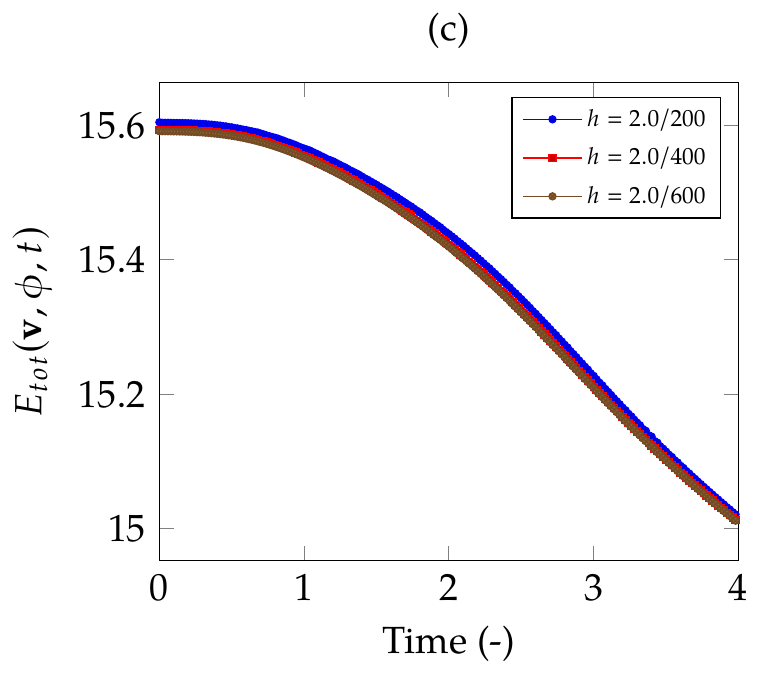}
	
	\includegraphics{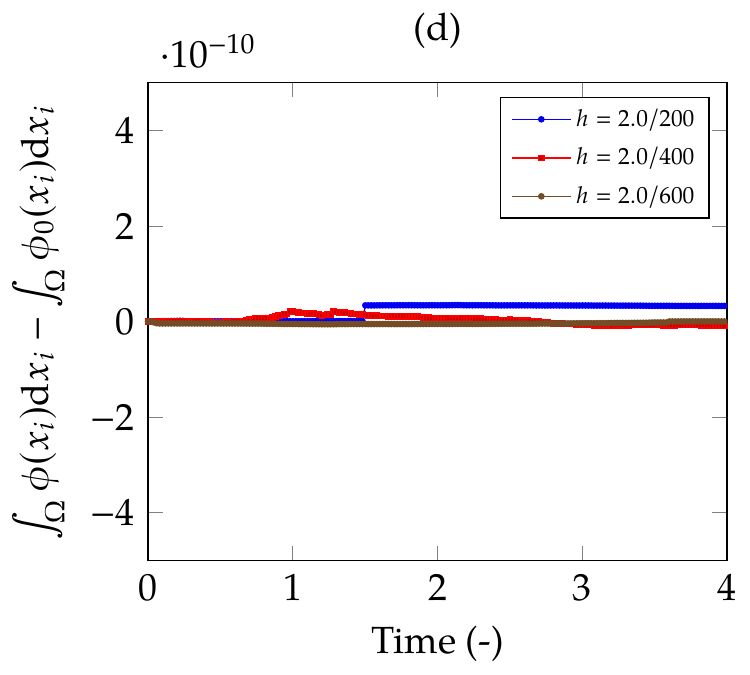}
	\caption{ \textit{2D Single Rising Drop Test Case 1}
		(\cref{subsubsec:single_rising_drop_2D_t1}). Shown in the panels are (a) comparisons of the computed bubble shape against results
		from the literature at non-dimensional time $T = 4.2$; (b) comparisons of the rise of the bubble centroid against results from the literature; (c) decay of the energy functional illustrating \cref{thrm:energy_stability}; and (d) total mass conservation (integral of total $\phi$).}
	\label{fig:test_case1}
\end{figure}

\subsubsection{Test case 2}
\label{subsubsec:single_rising_drop_2D_t2}
This test case considers a lower surface tension resulting in high deformations of the bubble as it rises. %for which surface tension is lower compared to test case 2.  A lower surface tension corresponds a higher Weber (We) number consequently .  
As before, we compare the bubble shape in~\cref{fig:test_case2} with benchmark quantities presented in~\citep{ Hysing2009, Aland2012, Yuan2017}.  Panel~(a) shows the shape comparison with benchmark studies in the literature. We see an excellent agreement in the shape of the bubble. All simulations (our results and benchmarks) exhibit a skirted bubble shape. We see an excellent match in the overall bubble shape with minor differences in the dynamics of its tails. Specifically, we see that the tails of the bubble in our case pinch-off to form satellite bubbles\footnote{Such instabilities require a low $Cn$ number, as only a  thin interface can capture the dynamics of the thin tails of the bubble}.  We performed this simulation with a $Cn=0.0025$ and three different spatial resolutions.  We can see in panel~(a) of~\cref{fig:test_case2} that our simulation captures this filament pinch-off in the tails.  The works of~\citet{ Aland2012, Yuan2017} did not observe these thin tails and pinch-offs, while~\citet{ Hysing2009} described pinch-off of the tails and satellite bubbles.  %The dynamics of this bubble tail is highly dependent on the numerical method used. % We think that these small differences can be attributed to the difference in the numerical method.

Panel~(b) of~\cref{fig:test_case2} compares the centroid location evolution with time. Again we see an excellent agreement with all three previous benchmark studies.  We can see from the magnified inset in this panel that as we increase the mesh resolution the plot approaches the benchmark studies and we see an almost exact overlap between the benchmark and cases with $h = 2/1000$ and $h = 2/2000$ demonstrating spatial convergence. Next, we report the evolution of the energy functional defined in~\cref{eqn:energy_functional} for test case 2.  Panel~(c) of~\cref{fig:test_case2}~shows the decay of the total energy functional in accordance with the energy stability condition for all three spatial resolutions of $h = 2.0/800$, $h = 2.0/1000$, and $h = 2.0/1200$. Finally, panel~(d) of~\cref{fig:test_case2} shows the total mass of the system in comparison with the total initial mass of the system. We can see that for all spatial resolutions, the change in the total mass against the initial total mass is of the order of \num{1e-8}, which illustrates that the numerical method satisfies mass conservation over long time horizons.

\begin{figure}[H]
	\centering
	\begin{tikzpicture}
	\begin{axis}[width=0.5\linewidth,scaled y ticks=true,xlabel={$\mathrm{x}$},ylabel={$\mathrm{y}$},legend style={nodes={scale=0.65, transform shape}}, ymin=0, ymax=4, ytick distance=1.0,  xtick={0.0, 1.0, 2}, title={(a)},
	legend style={nodes={scale=0.95, transform shape}, row sep=2.5pt},
	legend entries={present study, \citet{Hysing2009}, \citet{Aland2012}, \citet{Yuan2017}},
	legend pos= north west,
	legend image post style={scale=5.0},
	unit vector ratio*=1 1 1,
	xmin=0, xmax=2, 
	legend image post style={scale=3.0}
	]
	\addplot [only marks,mark size = 0.1pt,color=blue, each nth point=6, filter discard warning=false, unbounded coords=discard] table [x={x},y={y},col sep=comma] {bubbleShape_Re35We125.csv};
	\addplot [only marks,mark size = 0.1pt,color=black,each nth point=1, filter discard warning=false, unbounded coords=discard] table [x={x},y={y},col sep=comma] {Hysing_Re35_We125_bubble_shape.csv};
	\addplot [only marks,mark size = 0.1pt,color=red,each nth point=1, filter discard warning=false, unbounded coords=discard] table [x={x},y={y},col sep=comma] {Aland_Voight_Re35_We125_bubble_shape.csv};
	\addplot [only marks,mark size = 0.1pt,color=ForestGreen,each nth point=1, filter discard warning=false, unbounded coords=discard] table [x={x},y={y},col sep=comma] {Yuan_Re35_We125_bubble_shape.csv};
	\end{axis}
	\end{tikzpicture}
	\includegraphics{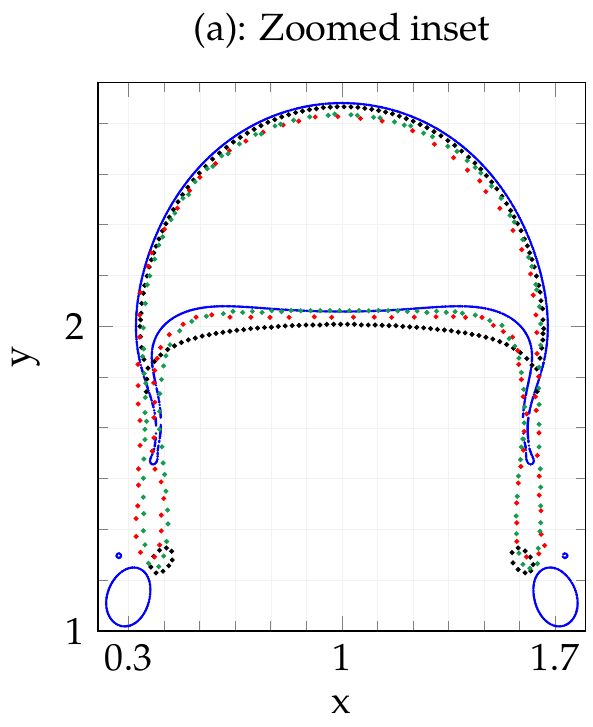}
	
	\begin{tikzpicture}[spy using outlines={rectangle, magnification=3, size=1.5cm, connect spies}]
	\begin{axis}[width=0.45\linewidth,scaled y ticks=true,xlabel={Time (-)},ylabel={Centroid},legend style={nodes={scale=0.65, transform shape}}, xmin=0, xmax=4.2, ymin=0.9, ymax=2.8, ytick distance=0.5,  xtick={0.0, 1.0, 2, 3, 4}, title={(b)},
	legend style={nodes={scale=0.95, transform shape}, row sep=2.5pt},
	legend entries={\citet{Hysing2009}, \citet{Aland2012}, \citet{Yuan2017}, $h = 2.0/600$, $h = 2.0/800$, $h = 2.0/1000$},
	legend pos= north west,
	legend image post style={scale=1.0}
	]
	\addplot [line width=0.1mm, color=black] table [x={time},y={centroid},col sep=comma] {Hysing_centroid_Re35We125.csv};
	%%%%%%%%%%%%%%
	\addplot [line width=0.1mm, color=red] table [x={time},y={centroid},col sep=comma] {aland_voight_centroid_Re35We125.csv};
	%%%%%%%%%%%%%%
	\addplot [line width=0.1mm, color=ForestGreen] table [x={time},y={centroid},col sep=comma] {Yuan_centroid_Re35We125.csv};
	%%%%%
	\addplot+[mark size = 0.75pt]table [x={time},y={centroid},col sep=comma, each nth point=1, filter discard warning=false, unbounded coords=discard] {centerOfMass_600_1200_Re35We125.csv};	
	%%%%%
	\addplot+[mark size = 0.75pt]table [x={time},y={centroid},col sep=comma, each nth point=1, filter discard warning=false, unbounded coords=discard] {centerOfMass_800_1600_Re35We125.csv};
	%%%%%
	\addplot+[mark size = 0.75pt]table [x={time},y={centroid},col sep=comma, each nth point=1, filter discard warning=false, unbounded coords=discard] {centerOfMass_1000_2000_Re35We125.csv};
	\coordinate (a) at (axis cs:3.6,2.05);
	\end{axis}
	\spy [black] on (a) in node  at (5,1.2);
	\end{tikzpicture}
	\hskip 5pt
	\includegraphics{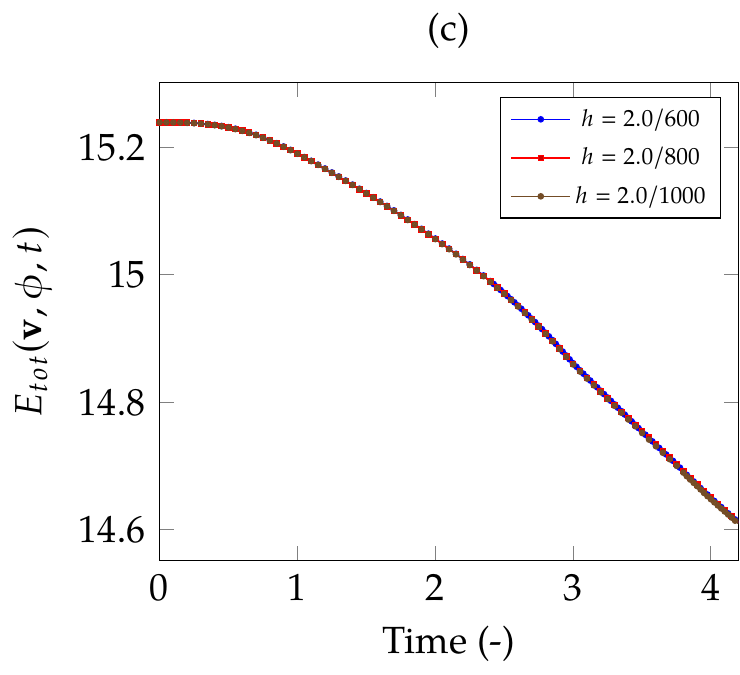}
	
	\includegraphics{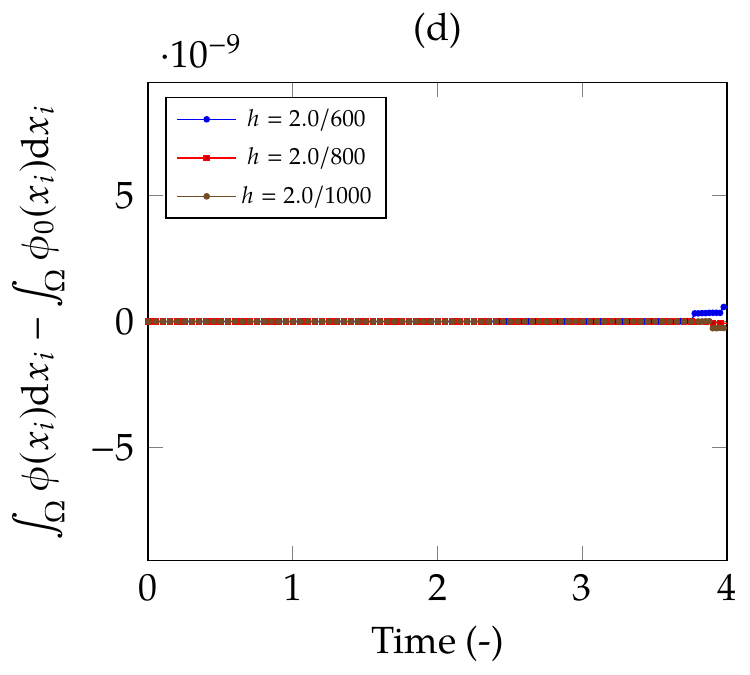}
		\caption{\textit{2D Single Rising Drop Test Case 2} 
	(\cref{subsubsec:single_rising_drop_2D_t2}). Shown in the panels are (a)~comparisons of the computed bubble shape against results from the literature at non-dimensional time $T = 4.2$; (b)~comparisons of the rise of the bubble centroid against results from the literature; (c)~decay of the energy functional illustrating theorem~\ref{thrm:energy_stability}; and (d)~total mass conservation (integral of total $\phi$).}
	\label{fig:test_case2}
\end{figure}

\subsection{2d simulations: Rayleigh-Taylor instability}
\label{subsec:rayleigh_taylor_2D}
We now demonstrate the performance of the numerical framework with large deformation in the interface and chaotic regimes (high Reynolds numbers).  While the bubble rise case in the previous sub-section is an interplay between surface tension and buoyancy, buoyancy dominates the evolution of the Rayleigh-Taylor instability. Here the choice of non-dimensional numbers ensures that the surface tension effect is small (high Weber number). In contrast, other studies switch off the surface tension forcing terms in the momentum equations (see, e.g.,~\citep{ Xie2015, Tryggvason1990, Li1996, Guermond2000}).  

The setup is as follows: the heavier fluid is on top of lighter fluid and the interface is perturbed. The heavier fluid on top penetrates the lighter fluid and buckles, which generates instabilities.  This interface motion is challenging to track due to large changes in its topology. Additionally, the Rayleigh-Taylor instabilities generally encompass turbulent conditions that require resolving finer scales to capture the complete dynamics.  We non-dimensionalize the problem by selecting the width of the channel as the characteristic length scale and the density of the lighter fluid as the characteristic specific density.  Just as in the bubble rise case we use buoyancy-based scaling, setting the Froude number ($Fr = u^2/(gD)$) to 1, which fixes the non-dimensional velocity scale to be $u = \sqrt{gD}$, where $g$ is the gravitational acceleration, and $D$ is the width of the channel.  Using this velocity to calculate the Reynolds number, we get $Re = \rho_L g^{1/2}D^{3/2}/\mu_L$, where $\rho_L$ and $\mu_L$ are the specific density and specific viscosity of the light fluid, respectively. We set the Reynolds number to $Re = 1000$.  These choices lead to a Weber number of $We = \rho_c g D^2/\sigma$.  To compare our results with previous studies, we simulate with the same initial conditions as presented in~\citet{ Tryggvason1988, Guermond2000, Ding2007, Xie2015}.  The $We$ number is selected to be 1000, so that the effect of surface tension is small on the evolution of the interface.  

The Atwood number ($At$) is often used to parametrize the dependence on density ratio, with $At = \left(\rho_{+} - \rho_{-}\right)/\left(\rho_{+} + \rho_{-}\right)$.  For the density ratios of 0.33, and 0.1, the Atwood numbers are $At = 0.5$, and $At = 0.82$, respectively.
We chose specific density of the heavy fluid to non-dimensionalize, therefore $\rho_{+} = 1.0$, and $\rho_{-} = 0.33$ for $At = 0.5$, while $\rho_{-} = 0.1$ for $At = 0.82$.  $\nu_{+}/\nu_{-}$ the viscosity ratio is set to 1. We use a no-slip boundary condition for velocity on all the walls along with no flux boundary conditions for $\phi$ and $\mu$.  The no-flux boundary condition for $\phi$ and $\mu$ inherently assumes a 90 degree wetting angle for both the fluids.  

\begin{remark}
	%Notice, that even though the $We$ number set to a high value, it is not zero.  This is in contrast with the studies done in the literature, where surface tension is chosen as zero.  
		Weak surface tension reduces vortex roll-up in the simulations of immiscible systems, especially at lower $At$. Experimental results from~\citet{ Waddell2001} show different vortex roll-up amounts for miscible and immiscible systems. This difference is analogous to the difference between zero surface tension simulations and finite surface tension simulations. This effect is irrelevant to compare front locations against the literature (short time horizons). Nevertheless, it is crucial to accurately track the long time dynamics (as smaller filaments are more stable in the non-zero surface tension case). 
\end{remark}     

We run numerical experiments for $Cn = 0.005, 0.0025, 0.00125$ with a uniform mesh of $400\times3200$, $400\times3200$, and $800\times6400$, respectively, for two different Atwood numbers: $At = 0.82, 0.5$.  The time step size for all the experiments is \num{1.25e-4}. A carefully tuned algebraic multi-grid linear solver with successive over-relaxation is setup for the linear solves in the Newton iterations (see~\cref{subsec:newton_iter}).  We detail the command-line arguments used in~\ref{sec:app_linear_solve}.  For the convergence criteria for both 2D Rayleigh-Taylor test cases we use a relative tolerance of $10^{-6}$ for Newton iteration and a relative tolerance of $10^{-7}$ for  the linear solves.

\Cref{fig:rt2d} shows the snapshots of the interface shape along with the corresponding vorticity generated as it evolves in time for $At = 0.82$ and $Cn = 0.00125$. We observe the usual evolution of Rayleigh Taylor instability where the heavier fluid penetrates the light one, causing the lighter fluid to rise near the wall.  The penetrating plume of the heavier fluid sheds small filaments at a non-dimensional time of $t^\prime = 1.358$.  The penetrating plume is symmetric at early times, with symmetry breaking occurring at longer times. The instability further proceeds to a periodic flapping. At longer times, the instability transitions to a chaotic mixing stage.\footnote{These high-resolution simulations of Rayleigh Taylor instability over long time horizons could serve as benchmarks. This data will be made publicly available.
} 

\begin{figure}[H]
	\centering	
	\begin{tabular}{p{0.13\textwidth}p{0.13\textwidth}p{0.13\textwidth}p{0.13\textwidth}p{0.13\textwidth}}
		%%%%%%%%%%%%%%%%%%%%%% 1 %%%%%%%%%%%%%%%%%%%%%%%%%%%%%%%%%%%
		\subfigure [$t^\prime = 0.0$] {
			\includegraphics[width=\linewidth]{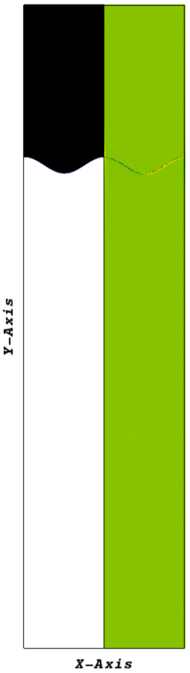}
			\label{subfig:rt_snap_1}
		} &
		%%%%%%%%%%%%%%%%%%%%%% 2 %%%%%%%%%%%%%%%%%%%%%%%%%%%%%%%%%%%
		\subfigure [$t^\prime = 0.996$] {
			\includegraphics[width=\linewidth]{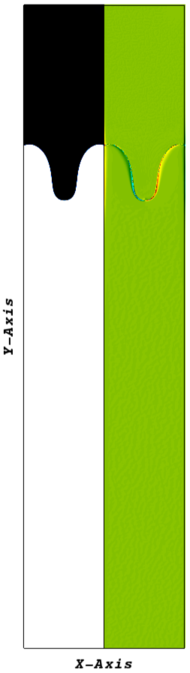}
			\label{subfig:rt_snap_2}
		} & 
		%%%%%%%%%%%%%%%%%%%%%% 3 %%%%%%%%%%%%%%%%%%%%%%%%%%%%%%%%%%%
		\subfigure [$t^\prime = 1.7205$] {
			\includegraphics[width=\linewidth]{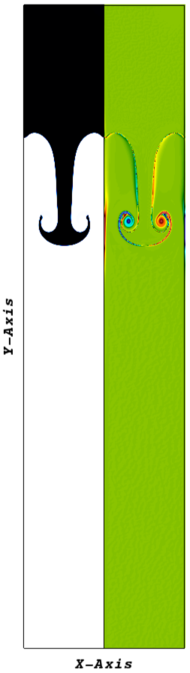}
			\label{subfig:rt_snap_3}
		} &
		
		%%%%%%%%%%%%%%%%%%%%%% 4 %%%%%%%%%%%%%%%%%%%%%%%%%%%%%%%%%%%
		\subfigure [$t^\prime = 2.3544$] {
			\includegraphics[width=\linewidth]{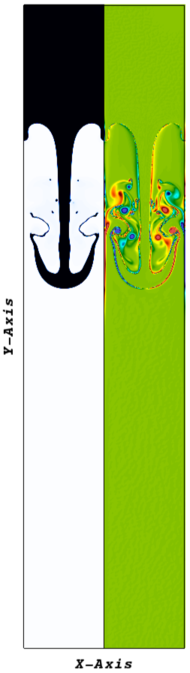}
			\label{subfig:rt_snap_4}
		} &
		%%%%%%%%%%%%%%%%%%%%%% 5 %%%%%%%%%%%%%%%%%%%%%%%%%%%%%%%%%%%
		\subfigure [$t^\prime = 2.9882$] {
			\includegraphics[width=\linewidth]{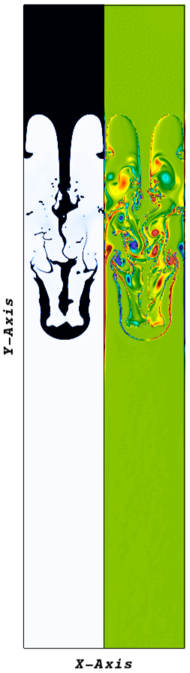}
			\label{subfig:rt_snap_5}
		} \\ 
		%%%%%%%%%%%%%%%%%%%%%% 6 %%%%%%%%%%%%%%%%%%%%%%%%%%%%%%%%%%%
		\subfigure [$t^\prime = 3.5316$] {
			\includegraphics[width=\linewidth]{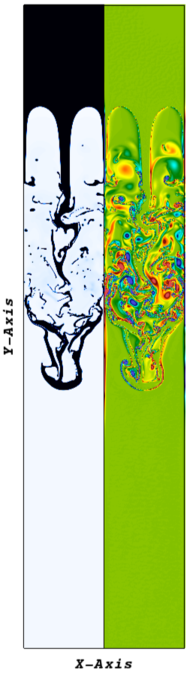}
			\label{subfig:rt_snap_6}
		} &
		
		%%%%%%%%%%%%%%%%%%%%%% 7 %%%%%%%%%%%%%%%%%%%%%%%%%%%%%%%%%%%
		\subfigure [$t^\prime = 4.0749$] {
			\includegraphics[width=\linewidth]{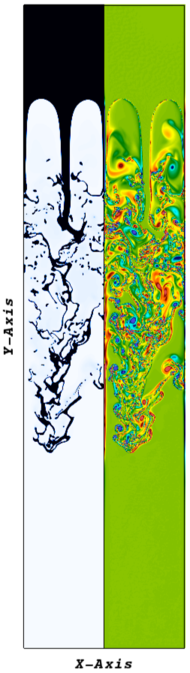}
			\label{subfig:rt_snap_7}
		} &
		%%%%%%%%%%%%%%%%%%%%%% 8 %%%%%%%%%%%%%%%%%%%%%%%%%%%%%%%%%%%
		\subfigure [$t^\prime = 4.89$] {
			\includegraphics[width=\linewidth]{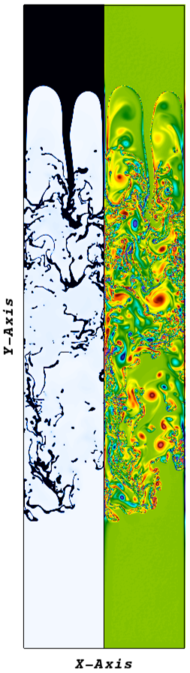}
			\label{subfig:rt_snap_8}
		} & 
		%%%%%%%%%%%%%%%%%%%%%% 9 %%%%%%%%%%%%%%%%%%%%%%%%%%%%%%%%%%%
		\subfigure [$t^\prime = 5.433$] {
			\includegraphics[width=\linewidth]{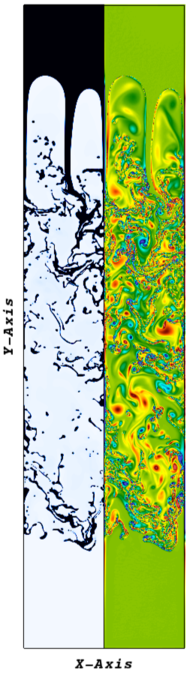}
			\label{subfig:rt_snap_9}
		} &
		%%%%%%%%%%%%%%%%%%%%%% 10 %%%%%%%%%%%%%%%%%%%%%%%%%%%%%%%%%%%
		\subfigure [$t^\prime = 5.9765$] {
			\includegraphics[width=\linewidth]{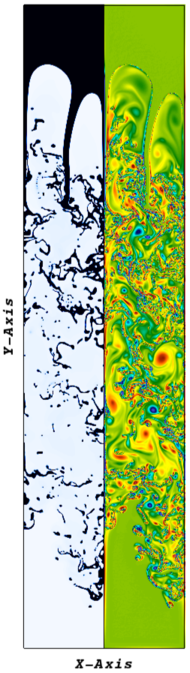}
			\label{subfig:rt_snap_10}
		}  
	\end{tabular}
	\caption{\textit{Rayleigh-Taylor instability in 2D} (\cref{subsec:rayleigh_taylor_2D}): Dynamics of the interface as a function of time for $At = 0.82$ (density ratio of 0.1).  In each panel the left plot illustrates the interface, and the right plot shows corresponding vorticity. Here normalised time $t' = t \sqrt{At}$}
	\label{fig:rt2d}
\end{figure}

\subsubsection{Influence of $Cn$ on long time dynamics}

We observe from~\cref{fig:rt2d} that the development of the instability (at longer times) depends on the resolution of shedding filaments. Therefore, the long-time dynamics depend on the interface thickness that the $Cn$ number controls.  To analyze the influence of $Cn$ number on the development of the Rayleigh-Taylor instability, we perform numerical experiments with three $Cn$ numbers. First, we compare our results with those in the literature to validate the framework.  For the cases of $At = 0.5$ and $At =0.82$ several previous studies presented the location of the top and bottom fronts as a function of time.  Panel~(a)~\cref{fig:RT_2D_comparison} compares the bottom and top front locations with previous studies~\citep{ Xie2015, Tryggvason1990, Li1996, Guermond2000}.  Our current results match the previous benchmarks for all three $Cn$ numbers.  Panels~(b) and~(c) from~\cref{fig:RT_2D_comparison} show the energy decay in line with~\cref{thrm:energy_stability} for $At = 0.5,0.82$. We observe energy stability for all the three $Cn$ numbers. 

At longer times, defining the top/bottom front becomes difficult due to filament breakup.  Therefore, we plot the center of mass of the heavy fluid as a function of time. This location is a good integral metric to track coarse-scale dynamics. Panels~(a) and~(b) of~\Cref{fig:RT_2D_centroid} show the evolution of center of mass of the heavy fluid for $At = 0.5$ and $At=0.82$, respectively. We observe a convergence of dynamics as we decrease the $Cn$ number. There are some deviations even for small $Cn$ numbers at longer time durations. The chaotic filament breakup and concomitant fluid features due to the relative motion of the interface cause these deviations. We explore them by visualizing the development of coherent vortices using the $Q$-criterion~\citep{ Hunt1988}. \Cref{subfig:rt_q_snap_1} shows that $Cn = 0.005$ under-resolves the filaments shed as the instability develops. This lack of sufficient resolution causes under-resolution of fine-scale vortices which depend on the shear instability generated by the finer filaments.  We observe these finer filaments as we decrease the $Cn$ number; \cref{subfig:rt_q_snap_2} shows the interface and the corresponding $Q-$criterion for $Cn = 0.0025$ at the same time point as $Cn = 0.005$. We resolve these finer filaments better in this case.  Upon further reduction of $Cn$  to $0.00125$, \cref{subfig:rt_q_snap_3} captures much finer flow structures compared to $Cn = 0.0025$.  We observe that even if integral metrics like front locations and center of mass match for two $Cn$ numbers ($0.005$ and $0.0025$ in our case, see~\cref{fig:RT_2D_centroid}), the fine-scale flow structures can be quite different due to their dependence on the resolution of the finer filaments. This fine structure resolution has a profound influence on the higher-order statistics of Rayleigh-Taylor instability.  We defer a detailed analysis of higher-order statistics to future work.

\begin{figure}[H]
	\centering
	\begin{tikzpicture}[]
	\begin{axis}[width=0.5\linewidth,scaled y ticks=true,xlabel={Normalised time $t' = t \sqrt{At}$(-)},ylabel={Position(-)},legend style={nodes={scale=0.65, transform shape}}, xmin=0, xmax=3.25, ymin=-3.5, ymax=2.0, ytick distance=1.0,  xtick={0.0, 0.5, 1.0, 1.5, 2, 2.5, 3.0, 3.5, 4.0}, title={(a)},
	legend style={nodes={scale=0.95, transform shape}, row sep=2.0pt},
	legend entries={
		\citet{Ding2007}~($At = 0.5$), \citet{Guermond2000}~($At = 0.5$), \citet{Tryggvason1988}~($At = 0.5$), 
		present study bottom front ($At = 0.5$~$Cn = 0.005$), 
		present study top front ($At = 0.5$~$Cn = 0.005$),
		present study bottom front ($At = 0.5$~$Cn = 0.0025$), 
		present study top front ($At = 0.5$~$Cn = 0.0025$),
		present study bottom front ($At = 0.5$~$Cn = 0.00125$), 
		present study top front ($At = 0.5$~$Cn = 0.00125$), 
		\citet{Xie2015}~($At = 0.82$), 
		present study bottom front ($At = 0.82$~$Cn = 0.005$), 
		present study top front ($At = 0.82$~$Cn = 0.005$),		
		present study bottom front ($At = 0.82$~$Cn = 0.0025$), 
		present study top front ($At = 0.82$~$Cn = 0.0025$),
		present study bottom front ($At = 0.82$~$Cn = 0.00125$), 
		present study top front ($At = 0.82$~$Cn = 0.00125$)},
	legend pos= outer north east,%north west,
	legend image post style={scale=1.0},
	legend style={font=\footnotesize}
	]
	\addplot +[only marks,mark size = 1.5pt] table [x={time},y={position},col sep=comma] {Ding_combined.csv};
	%%%%%%%%%%%%%%
	\addplot +[only marks,mark size = 1.5pt] table [x={time},y={position},col sep=comma] {Guermond_combined.csv};
	%%%%%%%%%%%%%%
	\addplot +[only marks,mark size = 1.5pt] table [x={time},y={position},col sep=comma] {Tryggvason_combined.csv};	
	%%%%%
	\addplot [line width=0.25mm, color = red, dotted]table [x={time},y={bottomFront},col sep=comma, each nth point=3, filter discard warning=false, unbounded coords=discard] {extentsRTinstability_At0dot5_Cn0dot005.csv};
	%%%%%
	\addplot [line width=0.25mm, color = red, dotted]table [x={time},y={topFront},col sep=comma, each nth point=3, filter discard warning=false, unbounded coords=discard] {extentsRTinstability_At0dot5_Cn0dot005.csv};
	%%%%%
	\addplot [line width=0.25mm, color = red, dashed]table [x={time},y={bottomFront},col sep=comma, each nth point=3, filter discard warning=false, unbounded coords=discard] {extentsRTinstability_At0dot5_Cn0dot0025.csv};
	%%%%%
	\addplot [line width=0.25mm, color = red, dashed]table [x={time},y={topFront},col sep=comma, each nth point=3, filter discard warning=false, unbounded coords=discard] {extentsRTinstability_At0dot5_Cn0dot0025.csv};
	%%%%%
	\addplot [line width=0.25mm, color = red]table [x={time},y={bottomFront},col sep=comma, each nth point=3, filter discard warning=false, unbounded coords=discard] {extentsRTinstability_At0dot5_Cn0dot00125.csv};
	%%%%%
	\addplot [line width=0.25mm, color = red]table [x={time},y={topFront},col sep=comma, each nth point=3, filter discard warning=false, unbounded coords=discard] {extentsRTinstability_At0dot5_Cn0dot00125.csv};
	%%%%%
	\addplot [only marks,mark size = 1.5pt, mark=triangle*, color=black, each nth point=3, filter discard warning=false, unbounded coords=discard] table [x={time},y={position},col sep=comma] {xie_combined_At0dot82.csv};
	%%%%%
	\addplot [line width=0.25mm, color = blue, dotted]table [x={time},y={bottomFront},col sep=comma, each nth point=3, filter discard warning=false, unbounded coords=discard] {extentsRTinstability_At0dot82_Cn0dot005.csv};
	%%%%%
	\addplot [line width=0.25mm, color = blue, dotted]table [x={time},y={topFront},col sep=comma, each nth point=3, filter discard warning=false, unbounded coords=discard] {extentsRTinstability_At0dot82_Cn0dot005.csv};
	%%%%%
	\addplot [line width=0.25mm, color = blue, dashed]table [x={time},y={bottomFront},col sep=comma, each nth point=3, filter discard warning=false, unbounded coords=discard] {extentsRTinstability_At0dot82_Cn0dot0025.csv};
	%%%%%
	\addplot [line width=0.25mm, color = blue, dashed]table [x={time},y={topFront},col sep=comma, each nth point=3, filter discard warning=false, unbounded coords=discard] {extentsRTinstability_At0dot82_Cn0dot0025.csv};
	%%%%%
	\addplot [line width=0.25mm, color = blue, solid]table [x={time},y={bottomFront},col sep=comma, each nth point=3, filter discard warning=false, unbounded coords=discard] {extentsRTinstability_At0dot82_Cn0dot00125.csv};
	%%%%%
	\addplot [line width=0.25mm, color = blue, solid]table [x={time},y={topFront},col sep=comma, each nth point=3, filter discard warning=false, unbounded coords=discard] {extentsRTinstability_At0dot82_Cn0dot00125.csv};
	\end{axis}
	\end{tikzpicture}
	
	\includegraphics{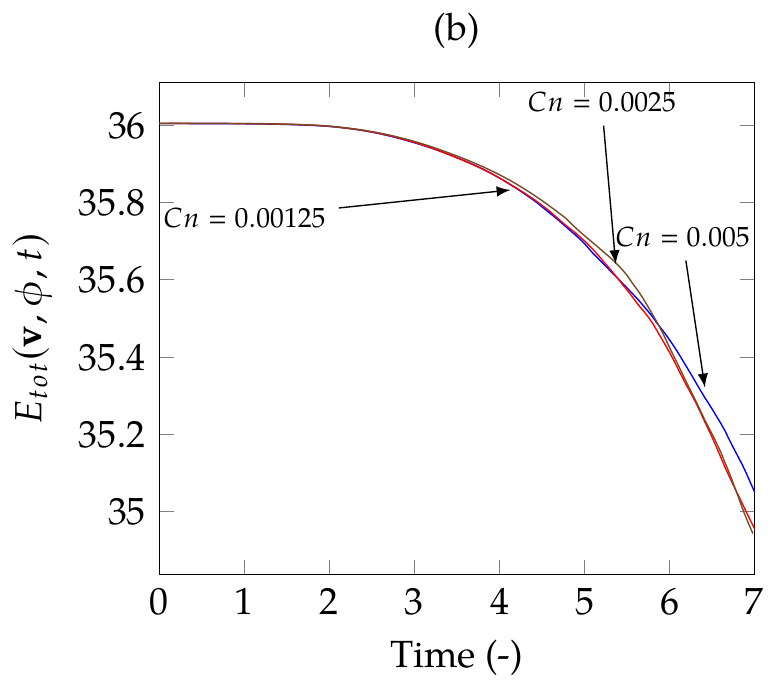}
	\includegraphics{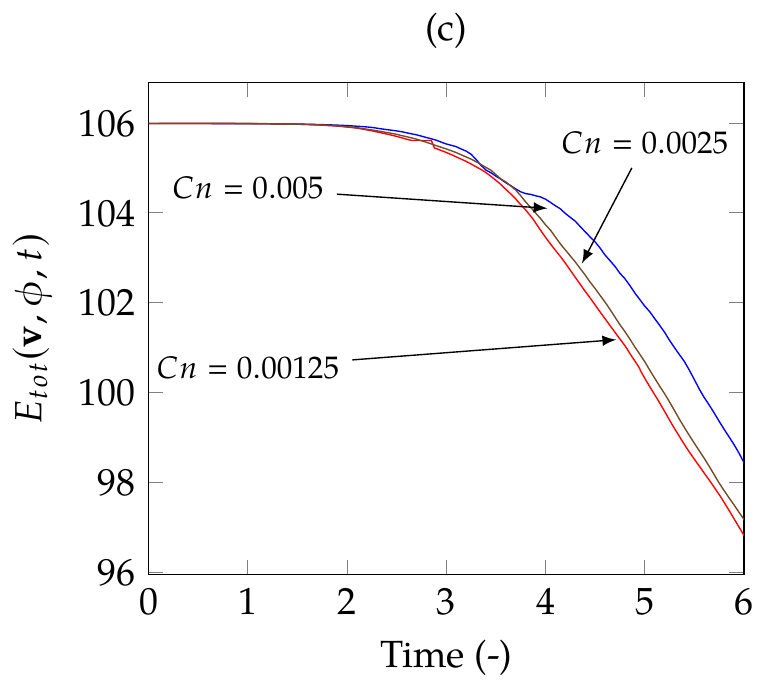}
	\caption{\textit{Rayleigh-Taylor instability 2D} (\cref{subsec:rayleigh_taylor_2D}): (a) Comparison of positions of top and bottom front of the interface with literature; (b) decay of the energy functional illustrating \cref{thrm:energy_stability} for $At = 0.5$; (c) decay of the energy functional illustrating \cref{thrm:energy_stability} for $At = 0.82$}
	\label{fig:RT_2D_comparison}
\end{figure}

\begin{figure}[H]
	\centering	
	\includegraphics{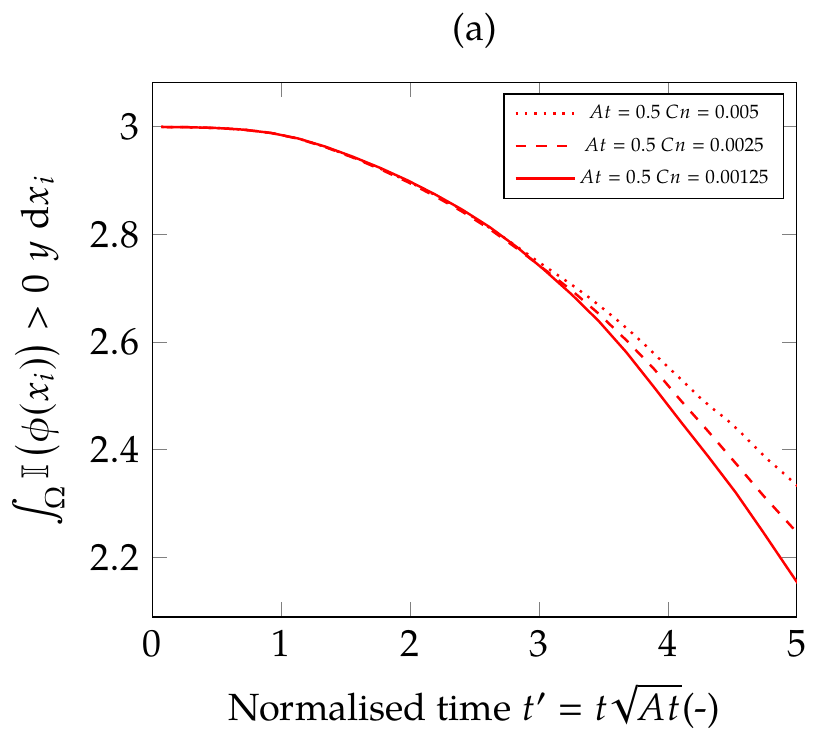}
	\includegraphics{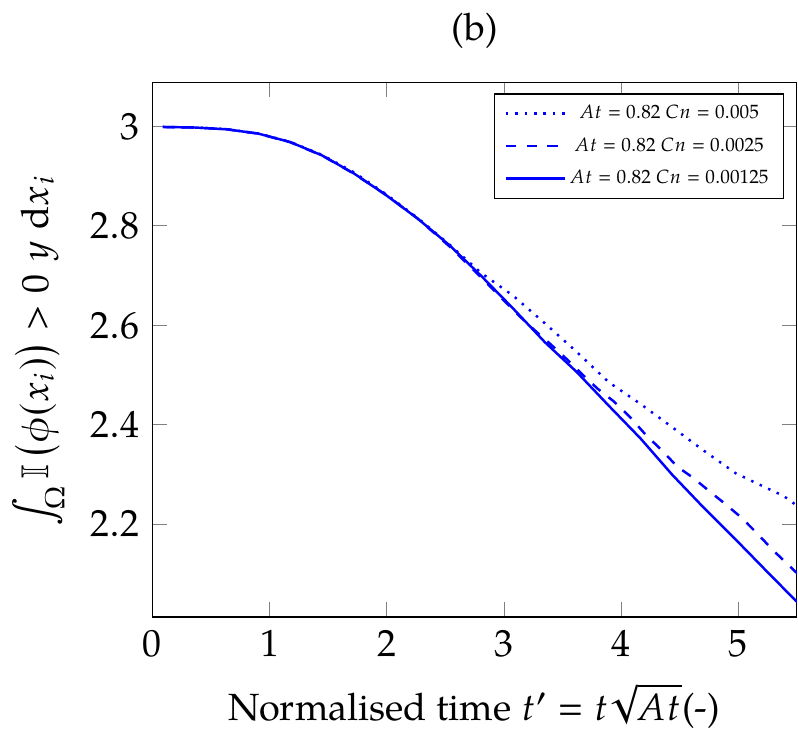}
	\caption{\textit{Rayleigh-Taylor instability 2D} (\cref{subsec:rayleigh_taylor_2D}): Comparison of centroids of heavy fluid for different $Cn$ numbers; (a) $At = 0.5$; (b) $At = 0.82$.}
	\label{fig:RT_2D_centroid}
\end{figure}

\begin{figure}[H]
	\centering	
	%\begin{tabular}{p{0.33\textwidth}p{0.33\textwidth}p{0.33\textwidth}}
		%%%%%%%%%%%%%%%%%%%%%% 1 %%%%%%%%%%%%%%%%%%%%%%%%%%%%%%%%%%%
		\subfigure [$Cn = 0.005$] {
			\includegraphics[width=0.22\linewidth]{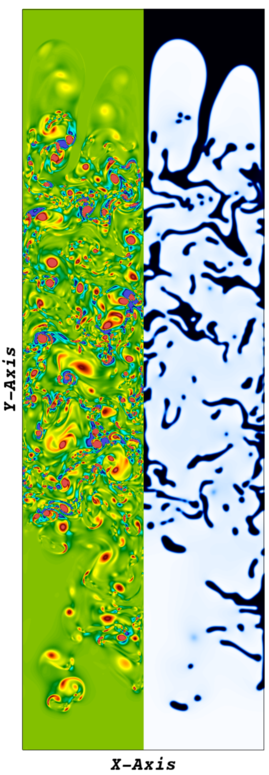}
			\label{subfig:rt_q_snap_1}
		} %&
		%%%%%%%%%%%%%%%%%%%%%% 2 %%%%%%%%%%%%%%%%%%%%%%%%%%%%%%%%%%%
		\subfigure [$Cn = 0.0025$] {
			\includegraphics[width=0.22\linewidth]{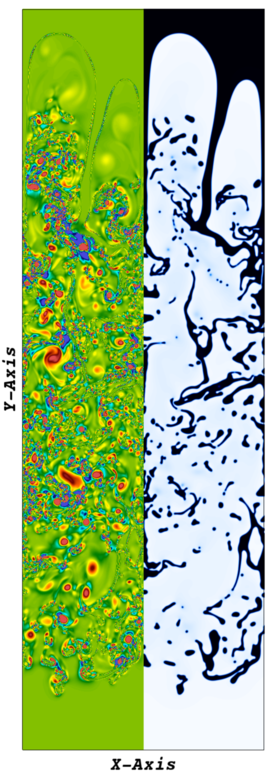}
			\label{subfig:rt_q_snap_2}
		} %& 
		%%%%%%%%%%%%%%%%%%%%%% 3 %%%%%%%%%%%%%%%%%%%%%%%%%%%%%%%%%%%
		\subfigure [$Cn = 0.00125$] {
			\includegraphics[width=0.22\linewidth]{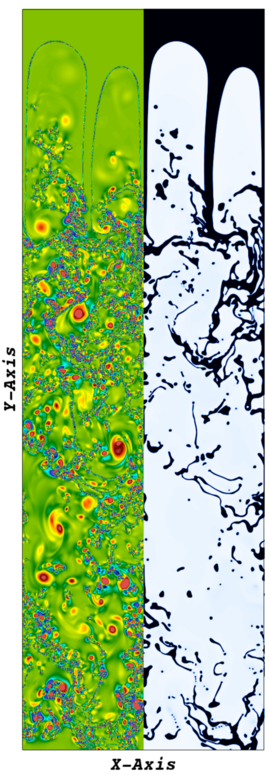}
			\label{subfig:rt_q_snap_3}
		}  
	%\end{tabular}
	\caption{\textit{Rayleigh-Taylor instability in 2D} (\cref{subsec:rayleigh_taylor_2D}): $Q-$criterion of Rayleigh Taylor instability for $At = 0.82$ (density ratio of 0.1) at $t^\prime = t\sqrt{At} = 5.6143$.  In each panel the left plot illustrates the $Q-$criterion, and the right plot shows corresponding interface location. These plots are zoomed insets of the domain near the interfacial instabilities.}
	\label{fig:rt2d_q}
\end{figure}

Finally, we report on the numerical mass conservation properties of the proposed scheme for the 2D Rayleigh-Taylor experiments. Panel (a) of \cref{fig:RT_mass_consv} shows the change in the total mass with respect to the initial total mass. We observe that it is of the order of $10^{-4}$. Therefore, we see excellent mass conservation even with a high amount of deformation of the interface over very large time horizons (over 30,000 time steps), especially in the presence of fine filaments that are clearly under-resolved for computationally tractable $Cn$ numbers. We see that there is some deterioration in mass conservation for the smallest $Cn$ number, again 
due to the under-resolution of thin filaments.  We see similar behavior of mass conservation for $At = 0.82$ for all three $Cn$ numbers; these results are shown in Panel (b) of \cref{fig:RT_mass_consv}.

 \begin{figure}[H]
	\centering
	\includegraphics{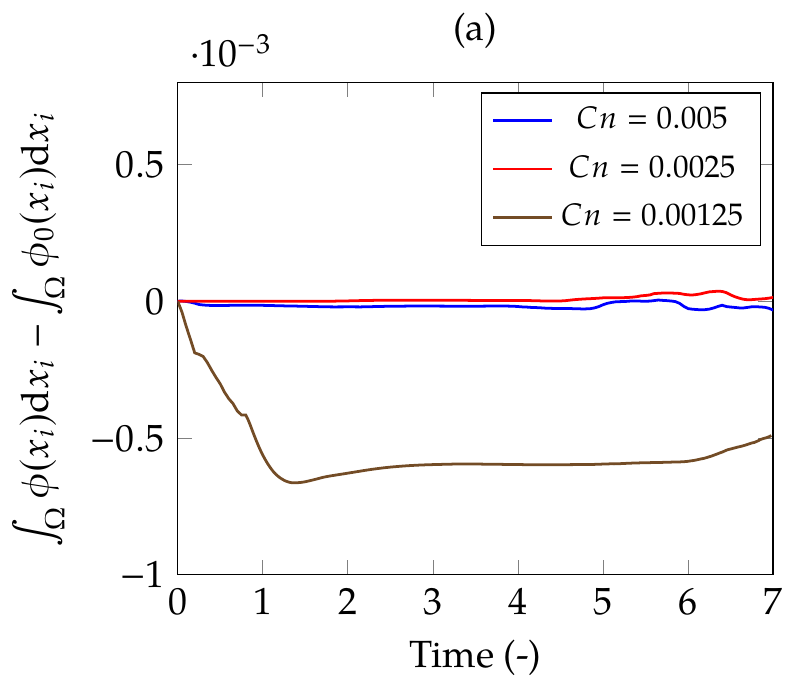}
	\includegraphics{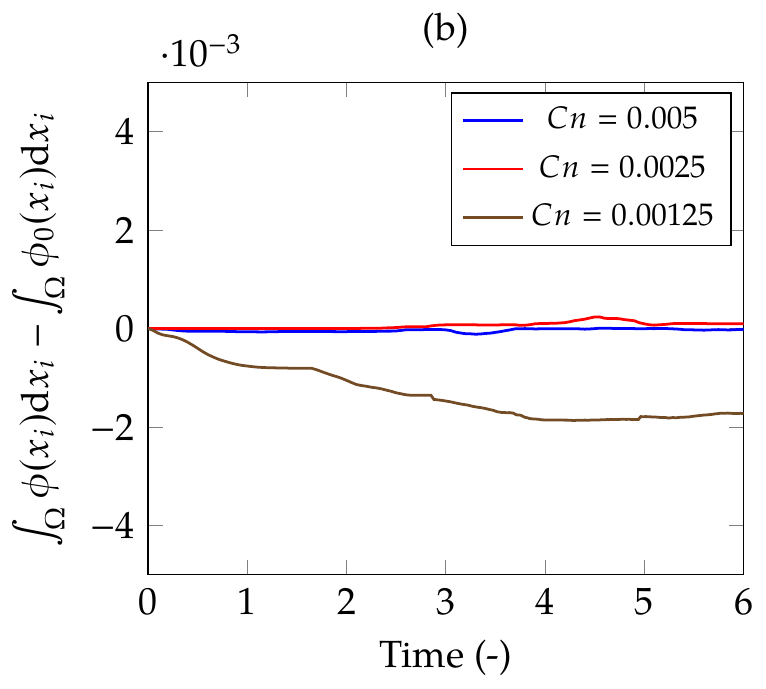}
	\caption{\textit{Rayleigh-Taylor instability 2D} (\cref{subsec:rayleigh_taylor_2D}): (a) total mass conservation (integral of total $\phi$) for $At = 0.5$; (b) total mass conservation (integral of total $\phi$) for $At = 0.82$.}
	\label{fig:RT_mass_consv}
\end{figure}

\subsection{3D simulations: Rayleigh-Taylor instability}
\label{subsec:rayleigh_taylor}
Next we deploy our framework in 3D and simulate the Rayleigh-Taylor instability in 3D using adaptive octree meshes.  For the 3D simulations we choose 
the following initial condition for $\phi$ to describe the interface:
\begin{align}
\phi(\vec{x}) &= \tanh\left(\sqrt{2} \left[ \frac{x_2 - h_0 - g\left(\vec{x}\right)}{Cn}\right]\right),\\ 
g(\vec{x}) &= 0.05 \left[\cos\left(2 \pi x_1 \right) + \cos\left(2 \pi x_3 \right)\right].
\label{eq:initialConditionRT}
\end{align}
Here $h_0$ is the location in the vertical direction for the interface, which in this case is chosen to be twice the characteristic length from the bottom of the channel. Typical simulations in the literature choose a rectangular domain that only captures one wavelength of the initial condition (e.g.,~\citep{ Tryggvason1990}).  To illustrate the advantage of the adaptive octree framework, we choose to include four wavelengths in the initial condition, resulting in a larger domain.  \Cref{fig:rayleighTaylorSetup} shows the initial condition, along with the schematics of the computational domain. We use a $Cn=0.0075$ and $At = 0.15$.  For this lower $At$ number simulation, the effect of non-zero surface tension is important. 
The non-dimensionalization follows the same logic as the 2D cases, with the Reynolds number set to 1000, the Weber number ($We = \rho_c g D^2/\sigma$) set to 1000, and viscosity ratio, $\nu_{+}/\nu_{-}$, set to 1.

\begin{figure}[H]
	\centering
	\includegraphics[width=0.4\linewidth]{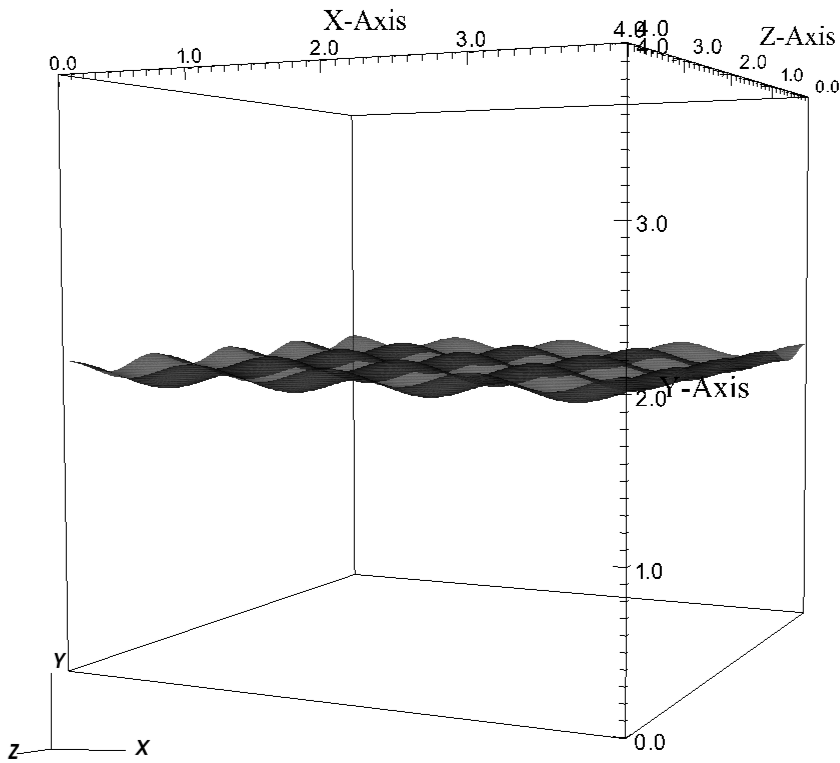}
	\caption{3D Rayleigh-Taylor instability (\cref{subsec:rayleigh_taylor}): shown here is  the computational domain with the iso-surface $\phi = 0$ showing the initial condition of the interface.}
	\label{fig:rayleighTaylorSetup}
\end{figure}

Due to the energy stability of the proposed numerical method we are able to take a reasonably large time-step size of $\delta t = 0.0025$.  We refine near the interface to a level corresponding to element length of $4/2^{8}$, ensuring about three elements for resolving the diffuse interface, while the refinement away from the interface is $4/2^4$.  Similar to the 2D cases, the boundary conditions are no-slip for velocity and no-flux for $\phi$ and $\mu$ on all the walls.  
%An algebraic multigrid linear solver with additive Schwarz based smoothers is setup for the linear solves in the Newton iterations (see \cref{subsec:newton_iter}).  
In \ref{sec:app_linear_solve} we provide a detailed description of the preconditioners and linear solvers along with the command-line arguments that are used. The convergence criterion for all the 3D Rayleigh-Taylor simulations use a relative tolerance of $10^{-6}$ for Newton iteration and a relative tolerance of $10^{-6}$ for the linear solves within each Newton iteration. 

%\subsubsection{Case 1 : Atwood number of of 0.15}

%As we are using an octree based adaptively refined mesh, the mesh is refined along the interface to the finest level.  It is coarsened away from the interface in way that 2:1 balancing is satisfied. 
\Cref{fig:rt3d_mesh_At15} shows the evolution of the interface along with the solution-adapted mesh.  We color the mesh with the order parameter value (blue for the heavy fluid and white for the light fluid) to show the  evolution of the system.  It is seen that as the interface evolves it deforms and expands, causing the mesh density to gradually increase.  This gradual growth helps with the efficiency of the simulation, since a uniform mesh for this case would be computationally prohibitive.  %At the point with the largest interfacial area, the simulation has around 95 million elements, corresponding to 1.7 billion degrees of freedom.  
The efficient and scalable implementation of the approach allows us to run this large scale simulation on \Stampede~with 256 KNL nodes.  We present the scalability of the approach later in the scaling section.  
%\Cref{fig:rt_comparison} shows a qualitative comparison of the interface shape with the shape previously reported for the same density ratio in \citet{Tryggvason1990}.  Although, the initial conditions for the interface in our case (inverted Gaussian) is different than the initial conditions used in \cite{Tryggvason1990} (two-dimensional harmonic wave), the nature of the instability evolving from both of the them is similar where a blob of heavy fluid on top penetrates into light fluid at the bottom, setting up interfacial instabilities.  It can be clearly seem from \cref{fig:rt_comparison} that the shapes at this fairly evolved times are quite similar to each other qualitatively. 

\Cref{fig:rt3d_interface_At15} shows that the initial sinusoidal perturbation develops into penetrating plumes of heavy fluid pushing down while the lighter fluid buckles and forms bubbles. The simulation maintains symmetry until the mixing becomes chaotic at long times; this is similar to the results in the 2D case.  As different parts of the interface move in opposite directions, Kelvin-Helmholtz instabilities cause the plumes to roll up, which in turn causes causes mushroom-like structures to develop (see~\cref{subfig:rt3d_int_At15_snap_4}).  
%When simulations only capture one wavelength of instability (e.g., those presented in \citep{ Liang2016, Mitchell2018, Jain2020}), the displacement of the lighter fluid near the walls forms bubble-like structures. We quantify the locations of the fronts of these plumes and bubbles as ``top" and ``bottom" fronts, respectively, in the 2D case.  
%In our case the bubbles are essentially upward \textit{spikes} of lighter fluid moving into heavier fluid that also undergo Kelvin-Helmholtz instabilities as they rise.  These upward spikes of lighter fluid displacing the heavier one at phase shifted locations relative to the \textit{spikes} of heavy fluid in~\cref{subfig:rt3d_int_At15_snap_4}.  
Although these two types of spikes (upwards/downwards) begin to develop in a checkerboard pattern that preserves symmetry, their dynamics are different due to the velocity differential that the two fronts face.  

The downward spikes undergo further deformation and we see the emergence of four long filaments from the mushroom structure (see~\cref{subfig:rt3d_int_At15_snap_7}) caused by the shear generated between the fluids. \citet{Liang2016} and~\citet{Jain2020}  also report four secondary filaments in their simulations for the same $At$ number, although their simulations were for zero surface tension (i.e., dynamics similar to miscible systems).  

On the other hand, the mushroom structures from the upward spikes develop into long and thin circular films.  The upward spikes develop circular films adjacent to the wall "bubbles" (i.e., structures near the wall that the heavier fluid generates as it is displaced by the lighter one) interact with these bubbles to merge and form larger structures (see~\cref{subfig:rt3d_int_At15_snap_8}). While the central plumes have little-to-no interaction with the wall, the bubbles continue to rise and ultimately collide with the top wall.  

Another important difference between the upward and downward spikes is their rate of growth.  \Crefrange{subfig:rt3d_int_At15_snap_5}{subfig:rt3d_int_At15_snap_8} show that the fronts of the upward spikes move slower than the fronts of the downward spikes due to the density differential.  \Cref{subfig:rt3d_int_At15_snap_9} shows that the mushroom structures collide with top and bottom walls, which then leads to further breakup that
creates the conditions for chaotic mixing. To the best of our knowledge, this is the first analysis in the literature of the dynamics for multiple wave single-mode instabilities. 

\begin{figure}[]
	\centering	
	\begin{tabular}{p{0.32\textwidth}p{0.32\textwidth}p{0.32\textwidth}}
		%%%%%%%%%%%%%%%%%%%%%% 1 %%%%%%%%%%%%%%%%%%%%%%%%%%%%%%%%%%%
		\subfigure [$t = 0.0$] {
			\includegraphics[width=\linewidth]{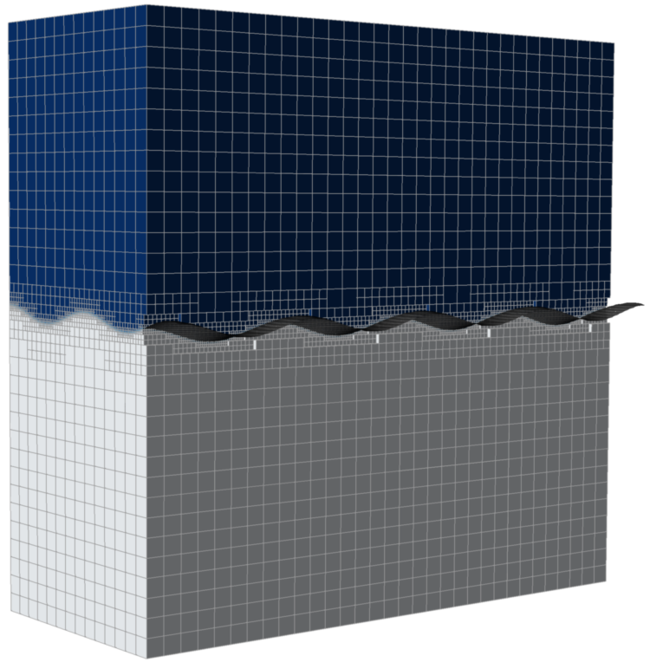}
			\label{subfig:rt3d_mesh_At15_snap_1}
		} &
		%%%%%%%%%%%%%%%%%%%%%% 2 %%%%%%%%%%%%%%%%%%%%%%%%%%%%%%%%%%%
		\subfigure [$t = 3.75$] {
			\includegraphics[width=\linewidth]{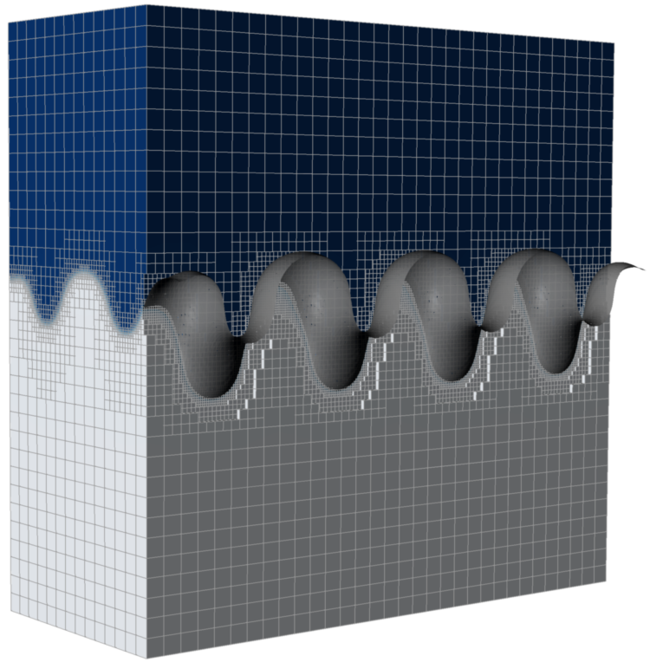}
			\label{subfig:rt3d_mesh_At15_snap_2}
		} & 
		%%%%%%%%%%%%%%%%%%%%%% 3 %%%%%%%%%%%%%%%%%%%%%%%%%%%%%%%%%%%
		\subfigure [$t = 5.0$] {
			\includegraphics[width=\linewidth]{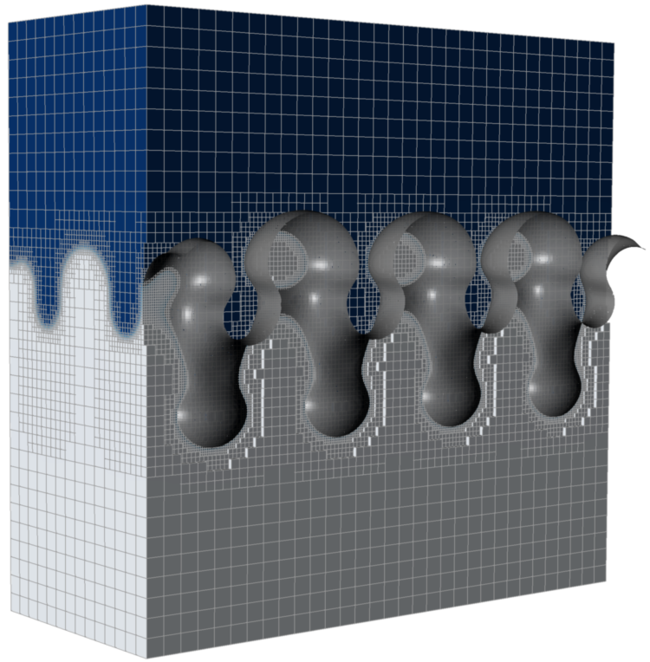}
			\label{subfig:rt3d_mesh_At15_snap_3}
		} \\
		
		%%%%%%%%%%%%%%%%%%%%%% 4 %%%%%%%%%%%%%%%%%%%%%%%%%%%%%%%%%%%
		\subfigure [$t = 6.75$] {
			\includegraphics[width=\linewidth]{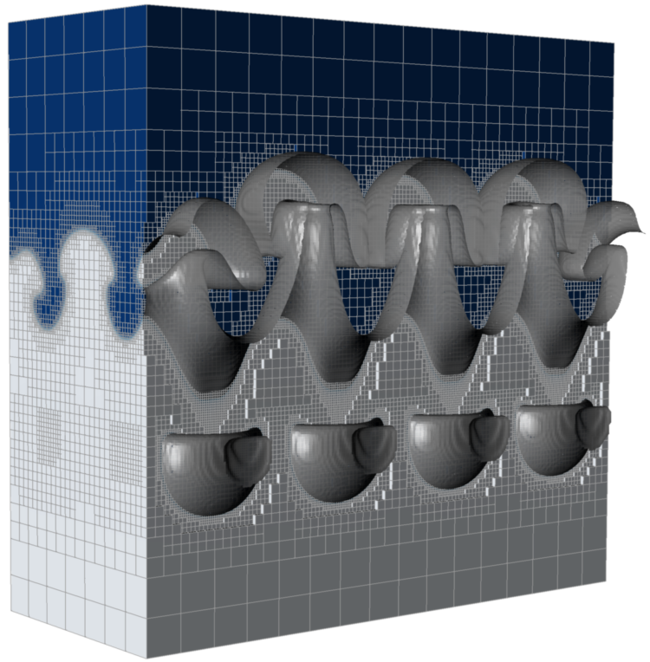}
			\label{subfig:rt3d_mesh_At15_snap_4}
		} &
		%%%%%%%%%%%%%%%%%%%%%% 5 %%%%%%%%%%%%%%%%%%%%%%%%%%%%%%%%%%%
		\subfigure [$t = 7.625$] {
			\includegraphics[width=\linewidth]{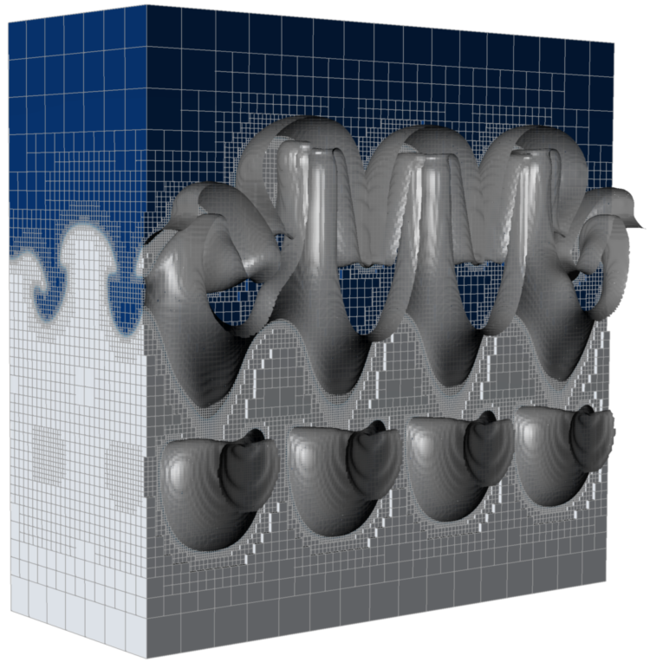}
			\label{subfig:rt3d_mesh_At15_snap_5}
		} & 
		%%%%%%%%%%%%%%%%%%%%%% 6 %%%%%%%%%%%%%%%%%%%%%%%%%%%%%%%%%%%
		\subfigure [$t = 8.5$] {
			\includegraphics[width=\linewidth]{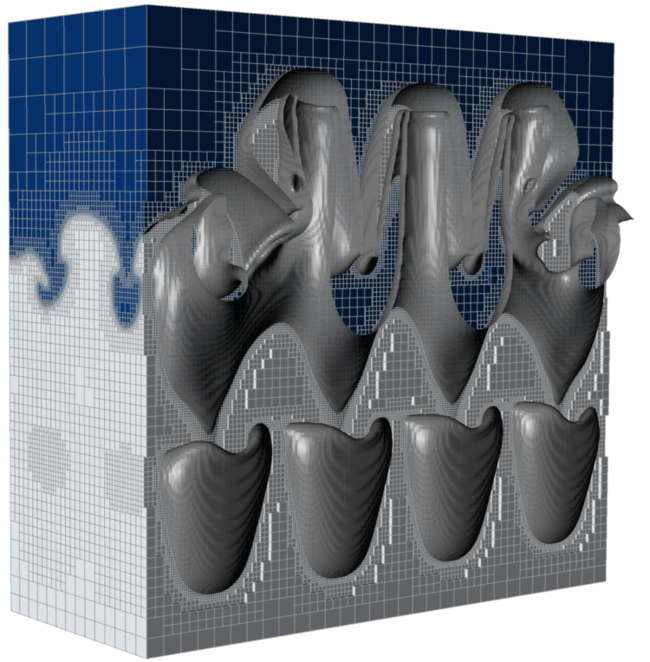}
			\label{subfig:rt3d_mesh_At15_snap_6}
		} \\
		
		%%%%%%%%%%%%%%%%%%%%%% 7 %%%%%%%%%%%%%%%%%%%%%%%%%%%%%%%%%%%
		\subfigure [$t = 9.375$] {
			\includegraphics[width=\linewidth]{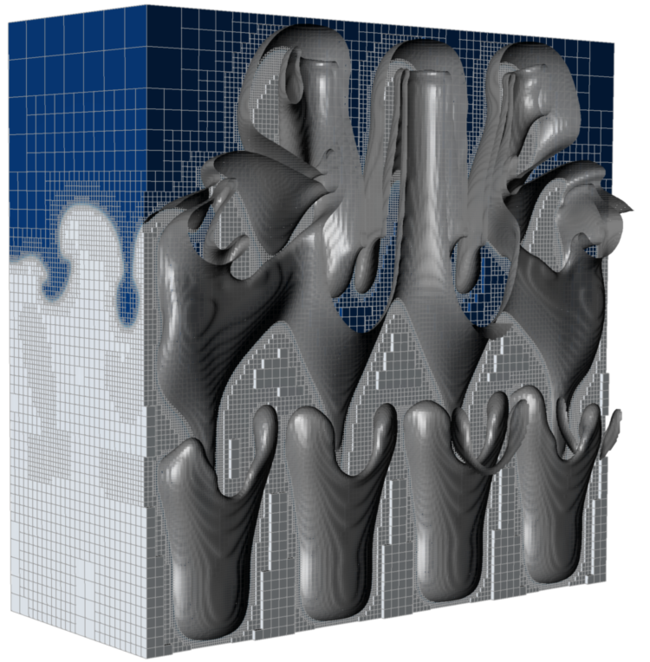}
			\label{subfig:rt3d_mesh_At15_snap_7}
		} &
		%%%%%%%%%%%%%%%%%%%%%% 8 %%%%%%%%%%%%%%%%%%%%%%%%%%%%%%%%%%%
		\subfigure [$t = 10$] {
			\includegraphics[width=\linewidth]{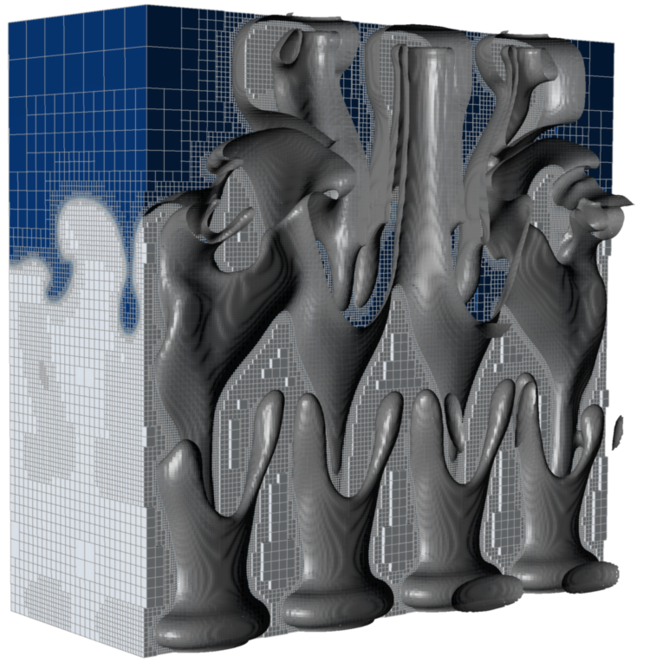}
			\label{subfig:rt3d_mesh_At15_snap_8}
		} & 
		%%%%%%%%%%%%%%%%%%%%%% 9 %%%%%%%%%%%%%%%%%%%%%%%%%%%%%%%%%%%
		\subfigure [$t = 11.25$] {
			\includegraphics[width=\linewidth]{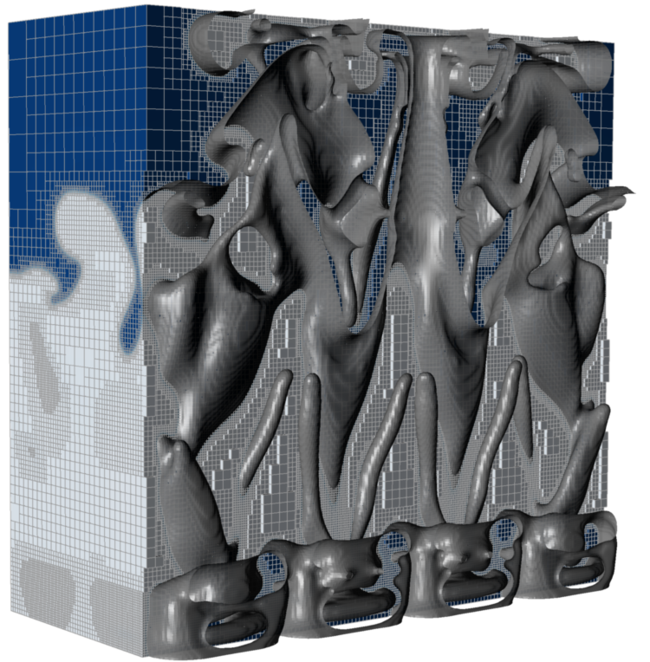}
			\label{subfig:rt3d_mesh_At15_snap_9}
		} \\
	\end{tabular}
	\caption{\textit{Rayleigh-Taylor instability in 3D} (\cref{subsec:rayleigh_taylor}): Snapshots of the mesh at various time-points in the simulation for Rayleigh-Taylor instability for $At = 0.15$.  The figures show half of the mesh of the actual domain to illustrate the refinement around the interface of two fluids represented by the gray iso-surface of $\phi = 0$. The phase field $\phi$ values color the mesh, where blue represents heavy fluid and white represents light fluid.  Here $t $(-) is the non-dimensional time.}
	\label{fig:rt3d_mesh_At15}
\end{figure}

\begin{figure}[]
	\centering	
	\begin{tabular}{p{0.32\textwidth}p{0.32\textwidth}p{0.32\textwidth}}
		%%%%%%%%%%%%%%%%%%%%%% 1 %%%%%%%%%%%%%%%%%%%%%%%%%%%%%%%%%%%
		\subfigure [$t = 0.0$] {
			\includegraphics[width=\linewidth]{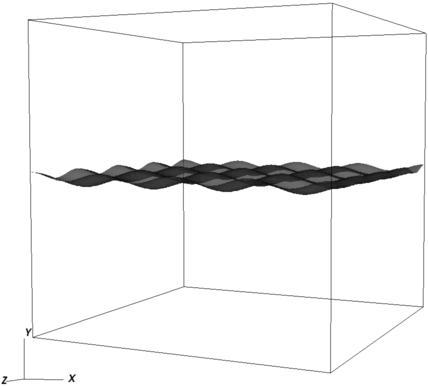}
			\label{subfig:rt3d_int_At15_snap_1}
		} &
		%%%%%%%%%%%%%%%%%%%%%% 2 %%%%%%%%%%%%%%%%%%%%%%%%%%%%%%%%%%%
		\subfigure [$t = 3.75$] {
			\includegraphics[width=\linewidth]{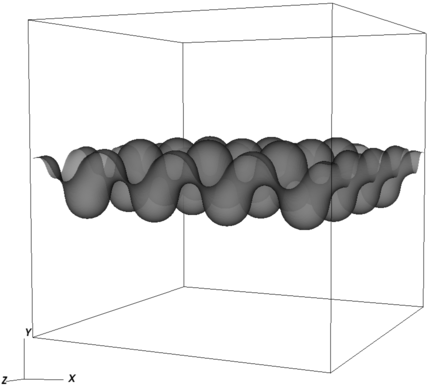}
			\label{subfig:rt3d_int_At15_snap_2}
		} & 
		%%%%%%%%%%%%%%%%%%%%%% 3 %%%%%%%%%%%%%%%%%%%%%%%%%%%%%%%%%%%
		\subfigure [$t = 5.0$] {
			\includegraphics[width=\linewidth]{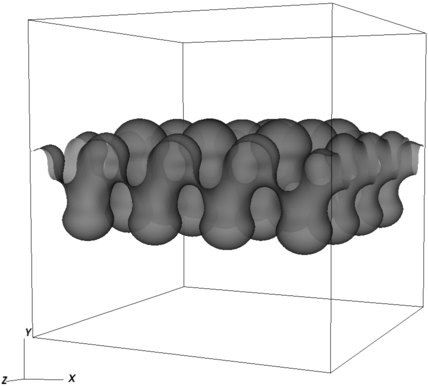}
			\label{subfig:rt3d_int_At15_snap_3}
		} \\
		
		%%%%%%%%%%%%%%%%%%%%%% 4 %%%%%%%%%%%%%%%%%%%%%%%%%%%%%%%%%%%
		\subfigure [$t = 6.75$] {
			\includegraphics[width=\linewidth]{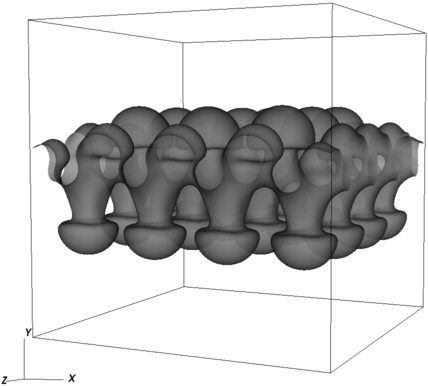}
			\label{subfig:rt3d_int_At15_snap_4}
		} &
		%%%%%%%%%%%%%%%%%%%%%% 5 %%%%%%%%%%%%%%%%%%%%%%%%%%%%%%%%%%%
		\subfigure [$t = 7.625$] {
			\includegraphics[width=\linewidth]{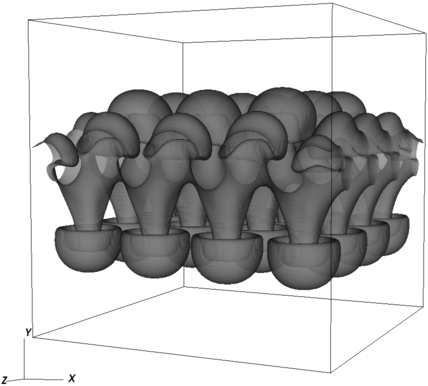}
			\label{subfig:rt3d_int_At15_snap_5}
		} & 
		%%%%%%%%%%%%%%%%%%%%%% 6 %%%%%%%%%%%%%%%%%%%%%%%%%%%%%%%%%%%
		\subfigure [$t = 8.5$] {
			\includegraphics[width=\linewidth]{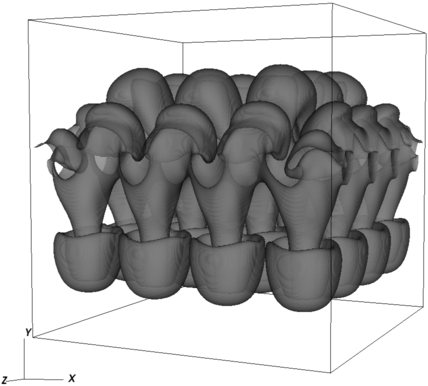}
			\label{subfig:rt3d_int_At15_snap_6}
		} \\
		
		%%%%%%%%%%%%%%%%%%%%%% 7 %%%%%%%%%%%%%%%%%%%%%%%%%%%%%%%%%%%
		\subfigure [$t = 9.375$] {
			\includegraphics[width=\linewidth]{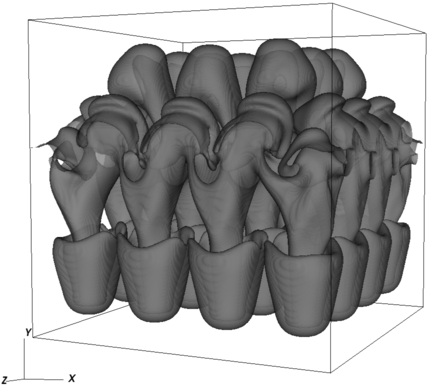}
			\label{subfig:rt3d_int_At15_snap_7}
		} &
		%%%%%%%%%%%%%%%%%%%%%% 8 %%%%%%%%%%%%%%%%%%%%%%%%%%%%%%%%%%%
		\subfigure [$t = 10$] {
			\includegraphics[width=\linewidth]{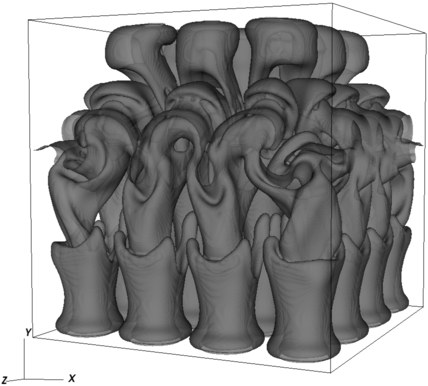}
			\label{subfig:rt3d_int_At15_snap_8}
		} & 
		%%%%%%%%%%%%%%%%%%%%%% 9 %%%%%%%%%%%%%%%%%%%%%%%%%%%%%%%%%%%
		\subfigure [$t = 11.25$] {
			\includegraphics[width=\linewidth]{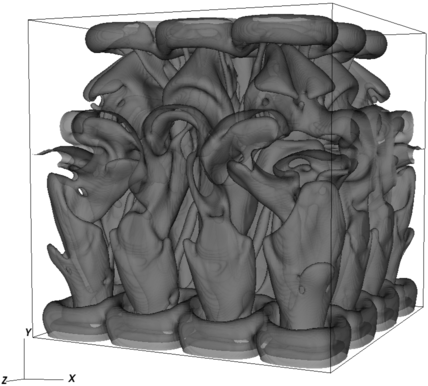}
			\label{subfig:rt3d_int_At15_snap_9}
		} 
	\end{tabular}
	\caption{\textit{Rayleigh-Taylor instability in 3D} (\cref{subsec:rayleigh_taylor}): Snapshots of the zero isosurface of $\phi$ which represents the interface at various time-points in the simulation for Rayleigh-Taylor instability for $At = 0.15$.  Here $t $(-) is the non-dimensional time.}
	\label{fig:rt3d_interface_At15}
\end{figure}

%\tikzexternalenable
%\begin{figure}[H]
%	\centering
%	\begin{tikzpicture}
%	\begin{axis}[width=0.45\linewidth,scaled y ticks=true,xlabel={Time (-)},ylabel={$E_{tot}(\vec{v},\phi,t)$},legend style={nodes={scale=0.65, transform shape}}, xmin=0, xmax=11, %xtick={0,1,2,3}
%	%,  title={\textbf{Decay of energy functional}}
%	]
%	\addplot +[mark = none, each nth point=20, filter discard warning=false, unbounded coords=discard] table [x={time},y={TotalEnergy},col sep=comma] {Figures/RT3D/At0dot15/Energy_data_At0dot15_3D.csv};
%	\end{axis}
%	\end{tikzpicture}
%	%Here ends the furst plot
%	\hskip 5pt
%	%Here begins the 3d plot
%	\begin{tikzpicture}
%	\begin{axis}[width=0.45\linewidth,scaled y ticks=true,xlabel={Time (-)},ylabel={$\frac{\int_{\Omega} \phi (\vec{x})\mathrm{d}\vec{x}}{\int_{\Omega} \phi_{0} (\vec{x})\mathrm{d}\vec{x}}$},legend style={nodes={scale=0.65, transform shape}}, 
%	ymin=-5e-2,ymax=5e-2, 
%	xmin=0, xmax=11, 
%	%xtick={0,1,2,3}, 
%	title={(b)}
%	]
%	\addplot[line width=0.35mm, color=black]table [x={time},y={TotalPhiNorm},col sep=comma] {Figures/RT3D/At0dot15/Energy_data_At0dot15_3D.csv};
%	\end{axis}
%	\end{tikzpicture}
%	\caption{Decay of the energy functional illustrating  \cref{thrm:energy_stability} for the case of $Re = 3000$ and $We = 1000$
%		in the simulation of the 3D Rayleigh-Taylor instability
%		(\cref{subsec:rayleigh_taylor}).
%		%; (b) Mass conservation for the Rayleigh-Taylor case of $Re = 3000$, $We = 1000$ 
%	}
%	\label{fig:RT_energy}
%\end{figure}
%\tikzexternaldisable

\subsection{3D simulations: lid-driven cavity}
\label{subsec:ldc_3d}

Our next example also exhibits considerable deformation of the interface.  The setup consists of a regularized lid-driven cavity, such that there are no corner singularities due to the imposed velocities.  Half of the cubical domain contains fluid~1, the other half contains fluid ~2. The fluids have finite surface tension.  \Cref{fig:ldcSetup}(a) shows the domain. Initially, the interface between the two fluids is flat. We use the length of the domain as the non-dimensionalizing length scale.  The key non-dimensional numbers are  $Re = 100$ (laminar), $We = 100$, $Cn = 0.005$, and $Pe = 13333$. The viscosity and density ratios are 1. We adaptively refine the mesh with the finest element size near the interface being $1/(2^8)$, the coarsest element size away from the interface being $1/(2^4)$, and wall element size is $1/(2^6)$.  Boundary conditions are no-slip for velocity on all the walls except the top wall, and no flux conditions for $\phi$ and $\mu$. For the top wall boundary conditions the $y$ and $z$-velocities are set to zero, while the   
$x$-velocity is given by following function:
\begin{equation}
v_1 = 2^4 x_1 \left(1 - x_1\right) x_3 \left(1-x_3\right).
\end{equation}
The velocity goes to zero on all the corners, thus avoiding corner singularities.  It is worthwhile to note that while several authors (\citet{ Chakravarthy1996, Chella1996, Park2016}) have studied this problem in 2D; to our best knowledge it has not been explored in 3D.  

The dynamics of the system with $Re = 100$ is in the laminar regime. In single-phase systems, a vortex develops near the top wall as the cavity flow evolves. This vortex forms for the two-phase case, as~\cref{fig:ldcSetup}(b) displays.  The vortex in~\cref{fig:ldcSetup}(b) rolls up the flat interface up the sidewall and causes a thin wetting layer to form on the top wall. \Cref{fig:ldc_interface} shows how this layer progressively moves until it rolls back into the center of the box. The interface rapidly rolls up as it moves towards the domain center; it moves away from walls since the velocity goes to zero near the wall. \citet{Chakravarthy1996, Chella1996}, and \citet{Park2016} described this behavior in 2D simulations. The stability of the thin layer at the top wall depends on the surface tension strength ($We$ number).  We simulate a moderate $We$ number which allows mixing and deformation but the film at the top is stable; for larger $We$ numbers the film at the top starts breaking up into bubbles.  We use a relative tolerance of $10^{-6}$ for Newton iteration.  For the linear solves within each Newton iteration we use a relative tolerance of $10^{-6}$.

The previous two examples illustrate the capability of the framework to capture fairly complex interfacial motion in 3D due to its robust and efficient implementation, which we discuss next.
\begin{figure}[H]
	\centering
	\begin{tabular}{cc}
	(a) \, \includegraphics[width=0.4\linewidth]{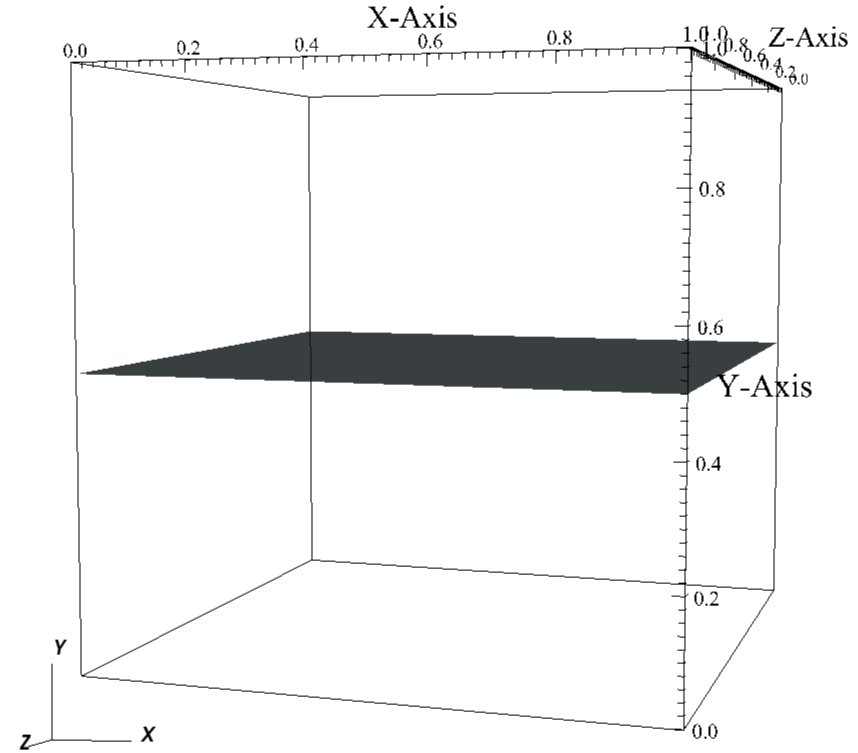} &
    (b) \, \includegraphics[width=0.4\linewidth]{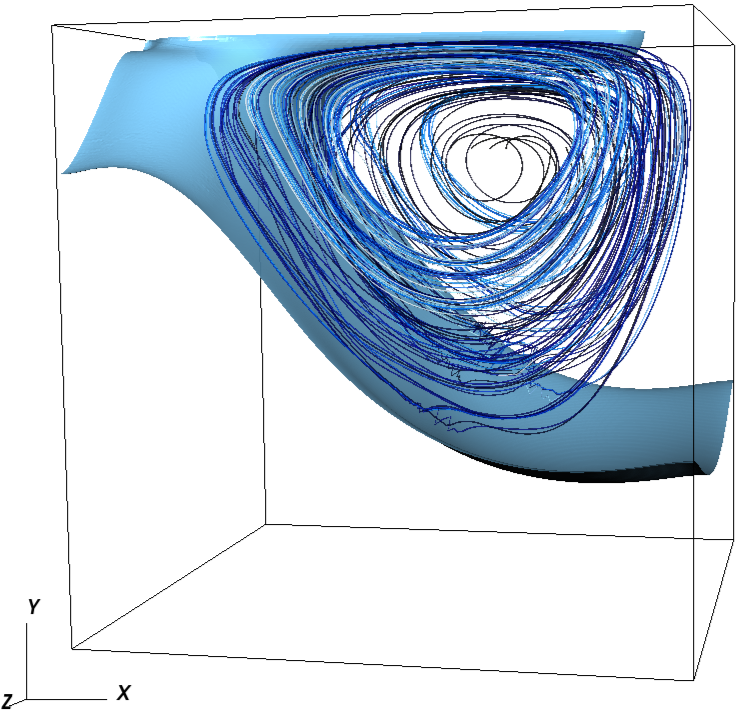}
    \end{tabular}
	\caption{The lid-driven cavity problem (\cref{subsec:ldc_3d}): (a) schematic of the computational domain with the iso-surface $\phi = 0$ showing the initial condition of the interface;
	(b) streamlines of velocity overlayed on the interface at $t = 16.5$.}
	\label{fig:ldcSetup}
\end{figure}

%\begin{figure}[H]
%	\centering
%	\includegraphics[width=0.4\linewidth]{Figures/LDC3D/streamline_interface0001}
%	\caption{Streamlines of velocity overlayed on the interface for lid-driven cavity %(\cref{subsec:ldc_3d}) at $t = 16.5$.}
%	\label{fig:ldcStreamline}
%\end{figure}

\begin{figure}[]
	\centering	
	\begin{tabular}{p{0.32\textwidth}p{0.32\textwidth}p{0.32\textwidth}}
		%%%%%%%%%%%%%%%%%%%%%% 1 %%%%%%%%%%%%%%%%%%%%%%%%%%%%%%%%%%%
		\subfigure [$t = 0.0$] {
			\includegraphics[width=\linewidth]{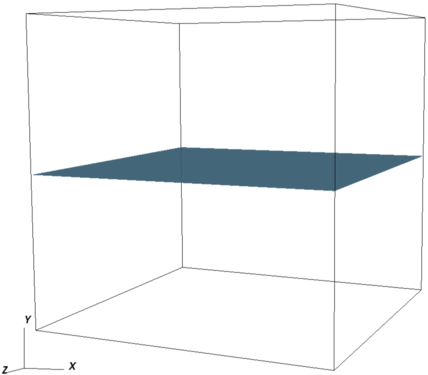}
			\label{subfig:ldc_int_snap_1}
		} &
		%%%%%%%%%%%%%%%%%%%%%% 2 %%%%%%%%%%%%%%%%%%%%%%%%%%%%%%%%%%%
		\subfigure [$t = 6.25$] {
			\includegraphics[width=\linewidth]{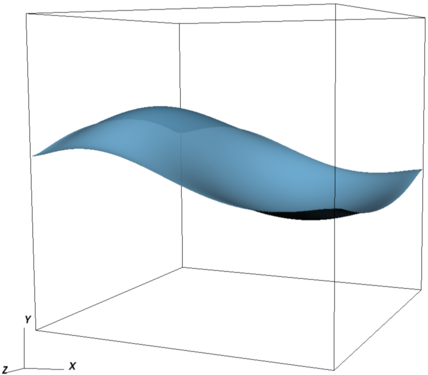}
			\label{subfig:ldc_int_snap_2}
		} & 
		%%%%%%%%%%%%%%%%%%%%%% 3 %%%%%%%%%%%%%%%%%%%%%%%%%%%%%%%%%%%
		\subfigure [$t = 8.75$] {
			\includegraphics[width=\linewidth]{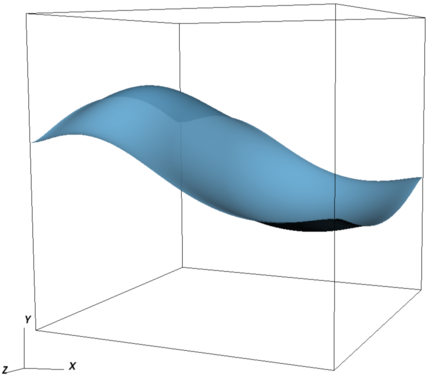}
			\label{subfig:ldc_int_snap_3}
		} \\
		
		%%%%%%%%%%%%%%%%%%%%%% 4 %%%%%%%%%%%%%%%%%%%%%%%%%%%%%%%%%%%
		\subfigure [$t = 11.25$] {
			\includegraphics[width=\linewidth]{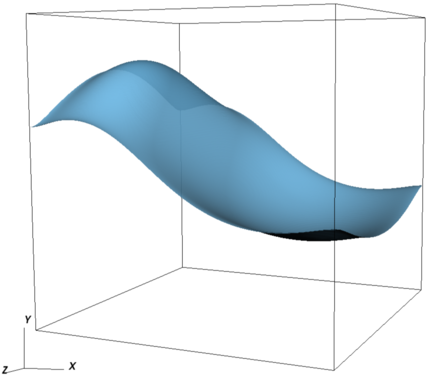}
			\label{subfig:ldc_int_snap_4}
		} &
		%%%%%%%%%%%%%%%%%%%%%% 5 %%%%%%%%%%%%%%%%%%%%%%%%%%%%%%%%%%%
		\subfigure [$t = 13.75$] {
			\includegraphics[width=\linewidth]{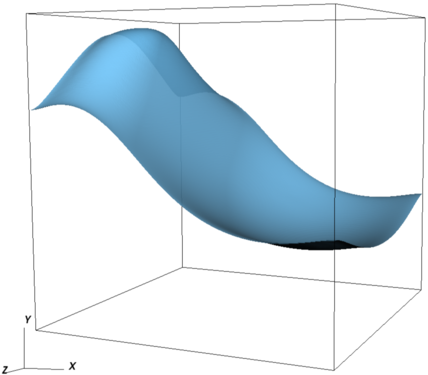}
			\label{subfig:ldc_int_snap_5}
		} & 
		%%%%%%%%%%%%%%%%%%%%%% 6 %%%%%%%%%%%%%%%%%%%%%%%%%%%%%%%%%%%
		\subfigure [$t = 16.25$] {
			\includegraphics[width=\linewidth]{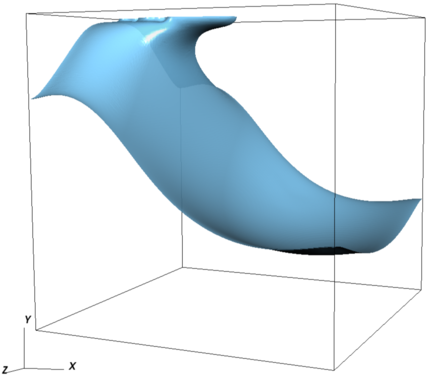}
			\label{subfig:ldc_int_snap_6}
		} \\
		
		%%%%%%%%%%%%%%%%%%%%%% 7 %%%%%%%%%%%%%%%%%%%%%%%%%%%%%%%%%%%
		\subfigure [$t = 18.75$] {
			\includegraphics[width=\linewidth]{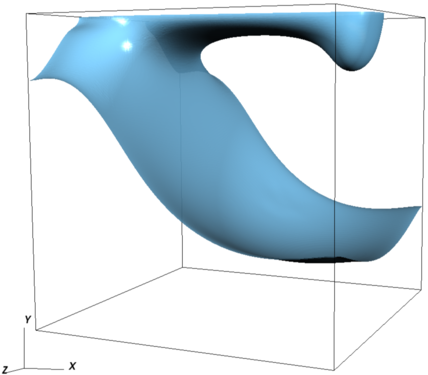}
			\label{subfig:ldc_int_snap_7}
		} &
		%%%%%%%%%%%%%%%%%%%%%% 8 %%%%%%%%%%%%%%%%%%%%%%%%%%%%%%%%%%%
		\subfigure [$t = 22.5$] {
			\includegraphics[width=\linewidth]{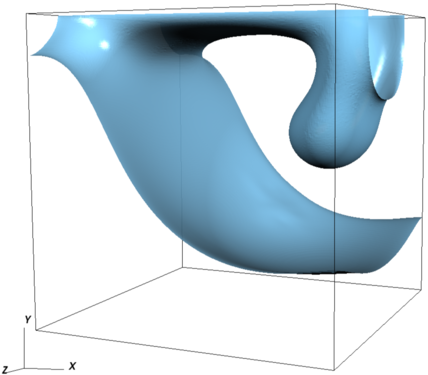}
			\label{subfig:ldc_int_snap_8}
		} & 
		%%%%%%%%%%%%%%%%%%%%%% 9 %%%%%%%%%%%%%%%%%%%%%%%%%%%%%%%%%%%
		\subfigure [$t = 25.25$] {
			\includegraphics[width=\linewidth]{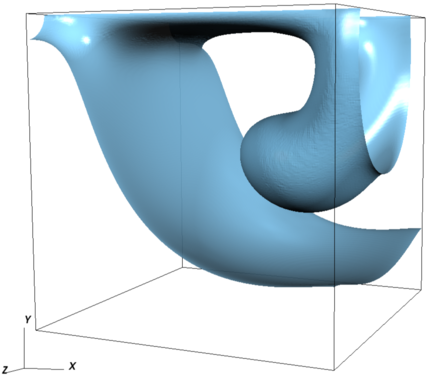}
			\label{subfig:ldc_int_snap_9}
		} 
	\end{tabular}
	\caption{\textit{Mixing of two fluids in a cubic driven cavity} (\cref{subsec:ldc_3d}): Snapshots of the zero iso-surface of $\phi$ which represents the interface at various time-points in the simulation for lid-driven cavity problem.  Here $t $(-) is the non-dimensional time.}
	\label{fig:ldc_interface}
\end{figure}
% % !TEX root = methods_paper.tex
\section{Scaling of the numerical implementation}
\label{sec:scaling}
\subsection{Strong scaling}

We perform a detailed timing analysis to demonstrate the parallel scalability of the framework.  We perform all scaling tests on TACC's ~\Stampede~ Knights Landing processors ranging from~1 node to~256 nodes with~68 processors per node.  We study in detail the lid-driven cavity case in~\cref{subsec:ldc_3d} for the scaling analysis.  We run each scaling experiment for ten time steps such that the initialization and setup do not dominate the timing. Three different levels of refinement characterize each mesh: 1. background mesh refinement ($L_{\mathrm{bkg}}$); 2. wall refinement ($L_{\mathrm{wall}}$); and 3. interface refinement ($L_{\mathrm{interface}}$). As the interface evolves, the mesh is subsequently refined near the interface and coarsened away from it.  \Cref{tab:refinement} shows the level of refinement and the approximate number of total elements for each of the three different meshes used for the scaling study.
\begin{table}[H]
\centering
\normalsize
\resizebox{0.3\linewidth}{!}{%
\begin{tabular}{|c|ccc|c|}
\hline
\hline
& $L_{\mathrm{bkg}}$ & $L_{\mathrm{wall}}$ & $L_{\mathrm{interface}}$ & $N_{\mathrm{elem}}$ \\ \hline
\textsc{M1} & 3 & 5 & 7 & 280 K\\
\textsc{M2} & 4 & 6 & 8 & 1 M\\
\textsc{M3} & 5 & 7 & 9 & 19 M\\
\hline
\end{tabular}%
}
\caption{Level of refinements involved in different meshes used for scaling studies}
\label{tab:refinement}
\end{table}

Panel~(a) of~\cref{fig:strong_scaling} shows the result of strong scaling for three different meshes, and panel~(b) shows the corresponding relative speedup. \Cref{fig:strong_scaling} demonstrates that we achieve a better than ideal scaling for these cases. To analyze the reasons behind this, we analyze component-by-component the framework timings and observe that the two dominant ones in the total run time are the Jacobian assembly ($J$ in~\cref{sec:numerical_tecniques}) and the preconditioner setup (PC setup in~\petsc), with PC setup dominating the timing (see~\cref{fig:scaling-fraction}).  Therefore, to understand the scaling behavior, we plot the strong scaling curves and corresponding relative speed up for these two components in~\cref{fig: strongScalingDetailed}.   Panels~(a) and~(b) of~\cref{fig: strongScalingDetailed} detail the scaling for Jacobian assembly, whereas, panels~(c) and~(d) show the scaling for PC setup.  As expected, the Jacobian assembly scales almost ideally for all the cases considered from panels~(a) and~(b) of~\cref{fig: strongScalingDetailed}.  However, the PC setup has better than ideal scaling, see panels~(c) and~(d).  As the PC setup dominates the total solution time, the overall scaling mimics this behavior.  

% % !TEX root = ../../methods_paper.tex

\tikzexternalenable
\begin{figure}[H]
	\centering
	\includegraphics{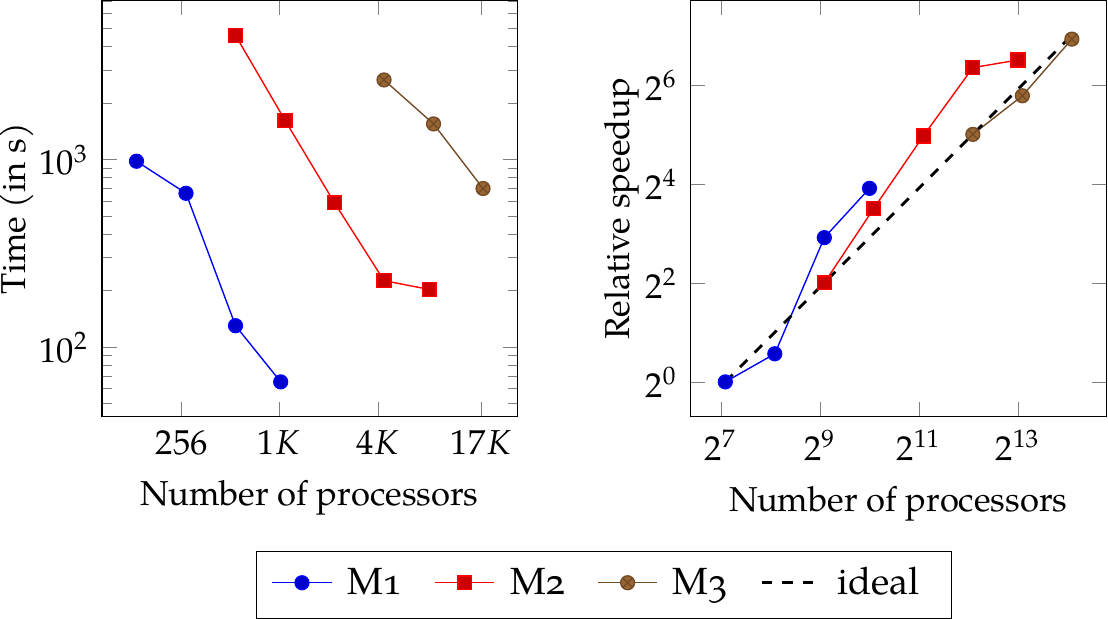}
	\caption{\textit{Strong scaling:} Shown in the panels are (a) the strong scaling behavior and (b) the relative speedup for total solve time for our solver on \Stampede~ Knights Landing processor. Three different mesh are considered: \textsc{M1} with 280 K elements, \textsc{M2} with 1 M elements and \textsc{M3} with 19 M elements. Excellent scaling behavior is observed up to
	 $\mathcal{O}(17K)$ processors.}
	\label{fig:strong_scaling}
\end{figure}

\begin{figure}[H]
	\centering
	\includegraphics{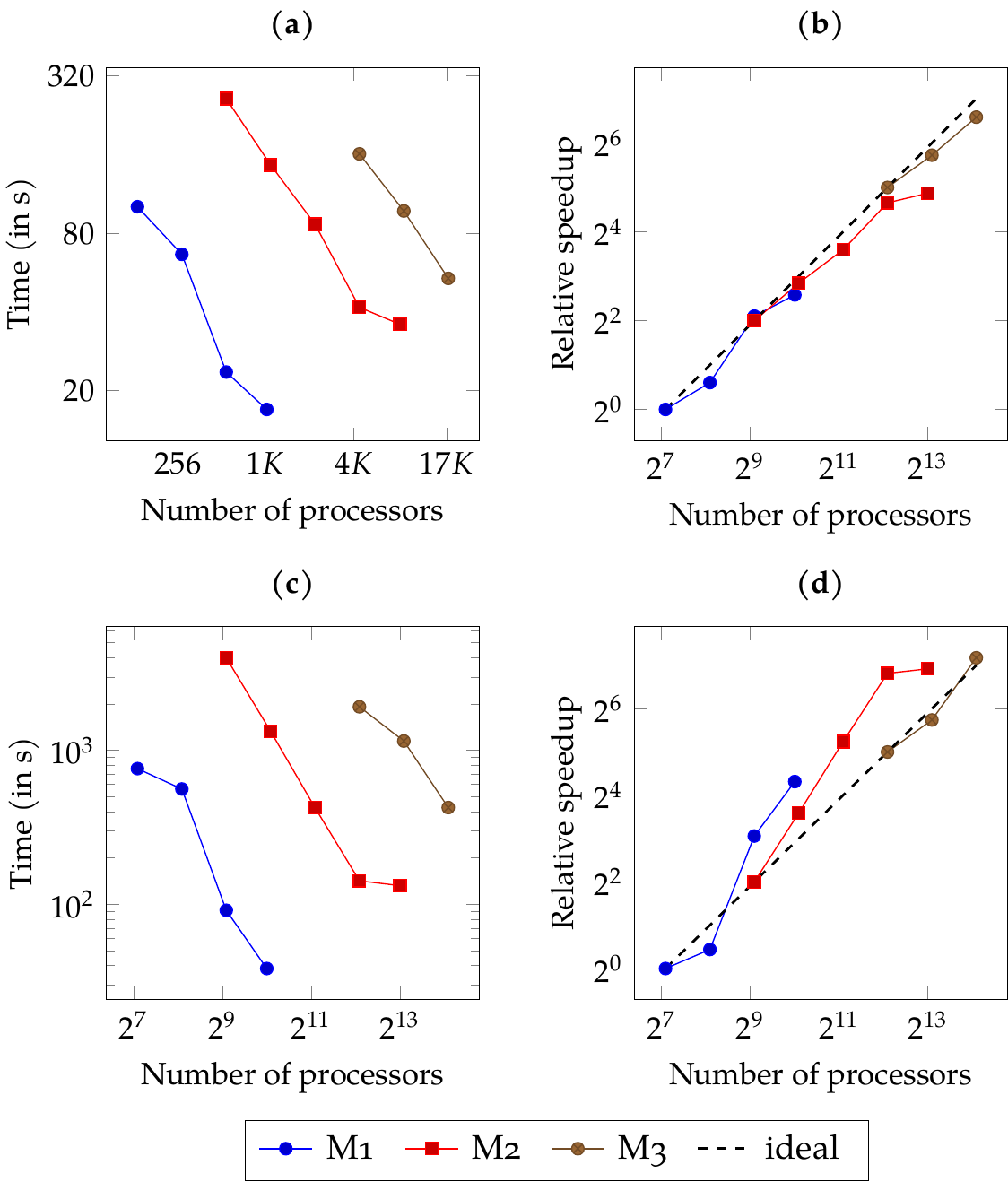}
	\caption{\textit{Strong scaling of components:} Figure showing the segregated strong scaling behaviour of two major component of our solver on \Stampede~ Knights Landing processor: (a) shows Jacobian assembly strong scaling; (b) shows relative speedup for Jacobian assembly;  (c) shows preconditioner setup; (d) shows relative speedup for Preconditioner setup. }
	\label{fig: strongScalingDetailed}
\end{figure}
\tikzexternaldisable
The deep memory hierarchy inherent in modern-day clusters may be the cause for the better than ideal scaling behavior.  We use an additive Schwarz-based block preconditioning with each block using LU  factorization (-\texttt{sub\_pc\_type} in~\petsc) (see~\ref{sec:app_linear_solve}) to compute the factors of the block matrix. The factorized block preconditioned matrix loses its sparsity, and the resultant matrix is less sparse ($b \times b$) matrix, where $b$ is the block size of the matrix on a processor. This denser matrix can no longer fit in L1, L2, or L3 cache for a big enough problem size. As the number of processors increases, the resulting problem size in a processor diminishes. Thus, these denser blocks begin to fit in cache, and therefore, we achieve a better than ideal speedup for PC setup and subsequent solve time. 

Figure~\ref{fig:scaling-fraction} shows the relative fraction of time spent in different meshes. 
%We see that PC setup is the most time-consuming step. 
%
We observe two specific trends. First, as the number of processors increases such that number of elements remains constant, the relative cost of PC setup decreases. This is because of the fact that LU factorization is performed on a smaller block matrix. Also, the cost of communication while performing matrix assembly and vector assembly increases with increase in number of processor, whereas LU factorization is performed on a block matrix and thus requires no communication. Secondly, increasing the number of elements such that the number of processor is fixed has a substantial effect on the relative cost of PC setup. LU factorization is an $\mathcal{O}(N^3)$ operation, whereas the number of FLOPS and communication involved in other operations like matrix assembly is atmost $\mathcal{O}(N^2)$. This explains the increase in the PC setup cost.
Future work will seek to develop schemes that will efficiently use cheaper preconditioners, whilst maintaining the energy stability and mass conservation of these schemes.

% % !TEX root = ../../methods_paper.tex

\begin{figure}[H]
\centering
\resizebox{0.50\textwidth}{!}{%
	\includegraphics{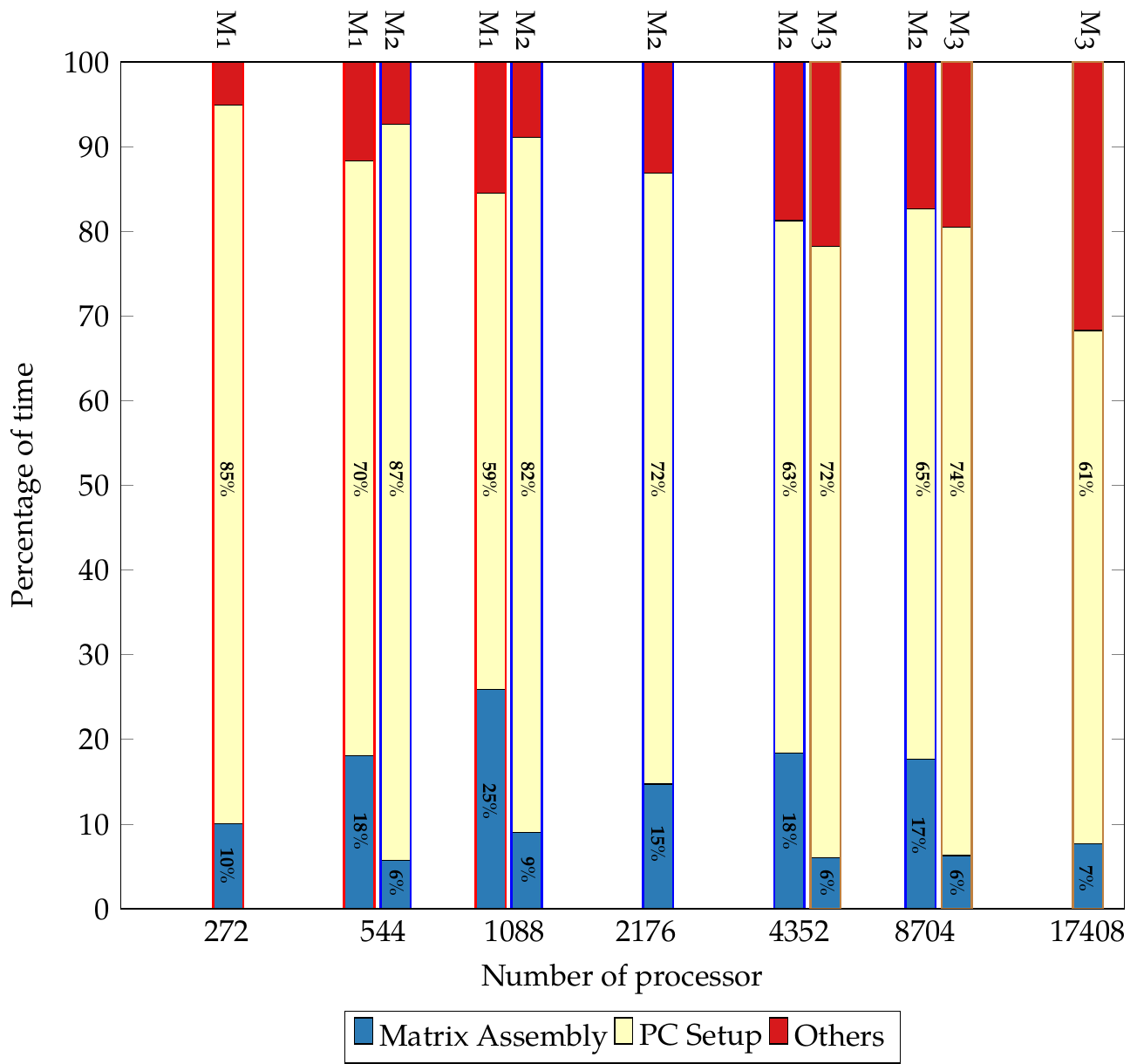}
}
\caption{Percentage of time taken for individual components for three different meshes used for strong scaling case. The meshes are labelled at the top.}
\label{fig:scaling-fraction}
\end{figure}
\begin{remark}
We use a Krylov space solver (BCGS) with ASM/LU preconditioning. Therefore, algebraic multi-grid solvers with \petsc's AMG can substantially improve the solve time.  Although we had some success with this setup (e.g., the 2D Rayleigh Taylor in~\cref{subsec:rayleigh_taylor_2D} cases use this setup), the 3D examples use the ASM/LU combination.  %And, as they are very expensive large scale cases and very expensive to re-run, we chose to show scaling analysis for the ASM/LU combination.  
\end{remark}
 
\subsection{Weak scaling}
Performing exact weak scaling \footnote{Weak scaling studies the variation in simulation time as we increase the number of processors with a fixed problem size per processor.} is non--trivial due to the adaptive nature of the mesh. Therefore, we consider  simulations with an approximately equal number of elements per processor to deduce the weak scaling nature of our solver.  \Cref{fig:weakScaling} shows the weak scaling results for the different problem sizes. Overall, similar to the strong scaling, we see an excellent weak scaling efficiency for different problem sizes. For each of the cases, we observe a weak scaling efficiency of greater than 0.5 for a 64 fold increase in the problem size. We also analyze the weak scaling for the dominant components of the overall time, i.e. Jacobian assembly and PC setup.  \Cref{fig:weakScalingDetailed} segregates the time for these components. The weak scaling efficiency ($> 0.5$) for each of these individual components is excellent as we vary the problem size by two orders of magnitude.
% % !TEX root = ../../methods_paper.tex
\begin{figure}[H]
	\centering
	\includegraphics{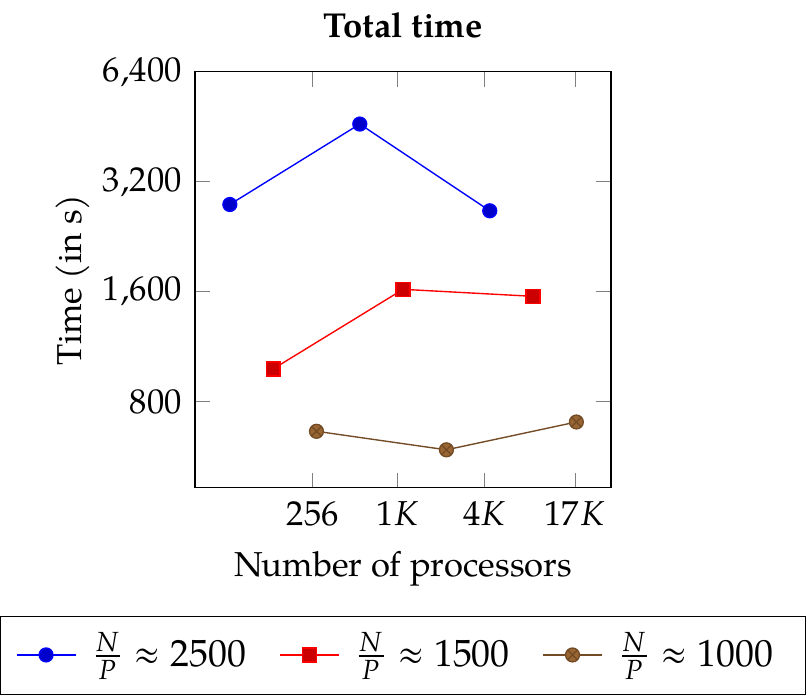}
	\caption{\textit{Weak scaling:} Figure showing weak scaling behaviour of total solve time of our solver on \Stampede~ Knights Landing processor for different number of elements per processor.}
	\label{fig:weakScaling}
\end{figure}

\begin{figure}[H]
	\centering
	\includegraphics{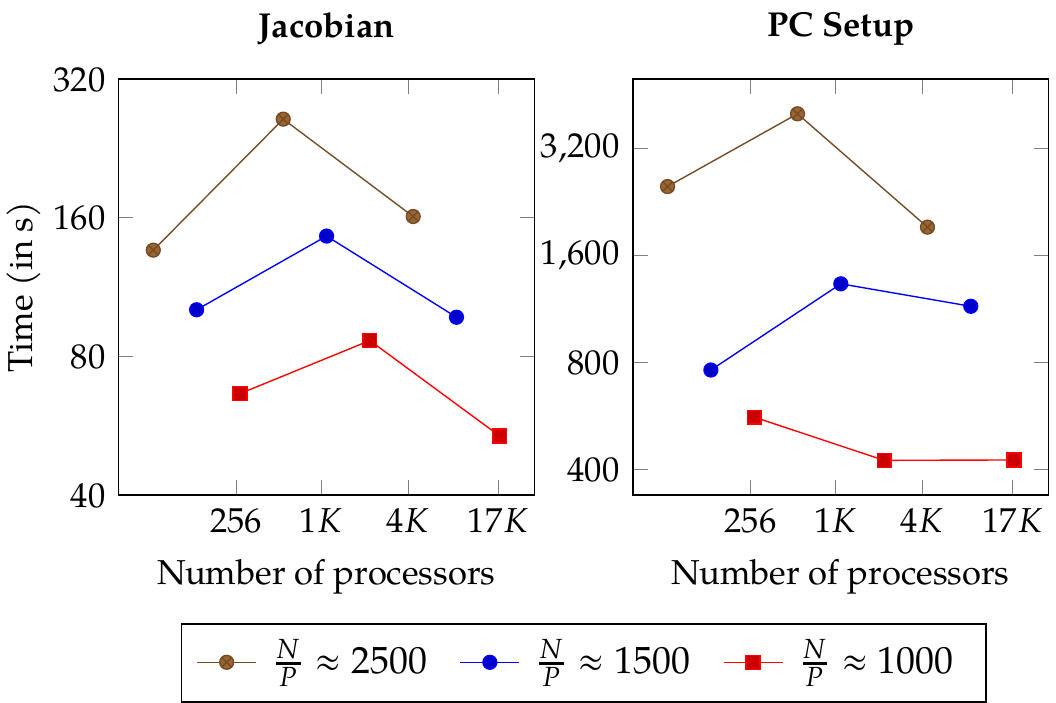}
	\caption{\textit{Weak scaling:} Figure showing the segregated weak scaling behavior of two major components of our solver on \Stampede~ Knights Landing processor for different number of elements per processor: Jacobian assembly and  preconditioner setup.}
	\label{fig:weakScalingDetailed}
\end{figure}
The preconditioner complexity and cost challenges (as seen from~\cref{fig:scaling-fraction}) are also present in the block iterative method of~\citet{ Khanwale2020},  where one performs these operation multiple times within one timestep (once every block iteration). This combined cost makes the block iterative method much more expensive compared to the fully coupled solver. Therefore, the current coupled framework improves the overall time to solve by reducing the number of evaluations required for Jacobian assembly and PC setup. In challenging application problems, preconditioning the linear problem (sub-iteration within each Newton solve)  requires substantial computational effort; future work will seek improve the speed and efficiencies of the preconditioning.
% !TEX root = methods_paper_withpdfFigures.tex

\section{Conclusions and future work}

In this work we developed a numerical framework for solving the Cahn-Hilliard Navier-Stokes (CHNS) model of two-phase flows.  We used a continuous Galerkin spatial discretization with linear finite elements and a second-order time-marching scheme. The newly developed 
scheme is a variant of the energy-stable energy 
block-iterative scheme used in~\citet{ Khanwale2020}, but in this work we improved both on the formal order of accuracy of the scheme (now second-order accurate) and the efficiency of the Newton iteration. We rigorously proved that the new time-marching scheme maintains energy-stability.  We used a variational multiscale approach to stabilize the pressure.  The resulting method was implemented using a scalable adaptive meshing framework to perform large 3D simulations of complex multiphase flows. Solution-adapted meshing was
accomplished using octrees in the~\dendro package. We presented a comprehensive set of numerical experiments in both 2D and 3D, which were used to validate and test our numerical framework.  We also used these numerical experiments to validate our theoretical estimates for the energy-stability and mass conservation properties of the method.  We ran the resulting computational code for the CHNS model on a massively parallel architecture to simulate multi-wave single-mode Rayleigh-Taylor instability. We were able to push the framework to solve up to 2 billion degrees of freedom.  The same 3D framework was also used to simulate the mixing of two fluids in a cubic lid-driven cavity.  We performed a detailed scaling analysis of the 3D framework and show excellent strong and weak scaling up to  $\mathcal{O}(17K)$ MPI processes.  
   
For future work we plan to perform a detailed analysis of the performance of higher-order basis functions.  As was observed in the scaling study~(\cref{sec:scaling}), the preconditioner setup and Jacobian assembly are dominant parts of the overall solve time; we will aim to reduce the computational costs of these steps. We will also seek to develop projection-based methods which utilize all benefits of VMS approach, while decoupling the pressure row into a separate linear problem.  Very efficient multigrid solvers can then be used for both the blocks to make them more efficient and scalable.  These improved solvers will help with adapting the framework to a matrix-free approach to solve the linear problems which are very efficient at large scales as shown by~\citet{Ishii2019}.
%We have identified  better in many applications involving turbulence.
% !TEX root = methods_paper_withpdfFigures.tex

\section{Acknowledgements}
The authors acknowledge XSEDE grant number TG-CTS110007 for computing time on TACC Stampede2.  The authors also acknowledge computing allocation through a DD award on TACC Frontera, ALCF computing resources and Iowa State University computing resources. JAR was supported in part by NSF Grants DMS--1620128 and DMS--2012699. BG, KS, MAK were funded in part by NSF grant 1935255, 1855902. HS was funded in part by NSF grant 1912930. VMC acknowledges the CSIRO Professorial Chair (VMC) in Computational Geoscience at Curtin University, Deep Earth Imaging Enterprise Future Science Platforms of the Commonwealth Scientific Industrial Research Organisation, CSIRO, of Australia, the European Union's Horizon 2020 research and innovation programme under the Marie Sklodowska-Curie grant agreement No 777778 (MATHROCKS), the Curtin Corrosion Centre and the Curtin Institute for Computation.

\bibliography{total_references,hari}

\newpage

% !TEX root = methods_paper_withpdfFigures.tex
\appendix

\section{Some elementary propositions}
For completeness we recall an important proposition and one its corollaries from \citep{Khanwale2020}.
\begin{proposition}{}{} 
	The following identity holds:
		\begin{equation}
		\pd{\tilde{\phi}^{k}}{x_j}\left(\pd{}{x_j}\left({\pd{\tilde{\phi}^{k}}{x_i}}\right)\right) = \frac{1}{2}\pd{}{x_j}\left(  \pd{\tilde{\phi}^{k}}{x_i} \pd{\tilde{\phi}^{k}}{x_i} \right)
		\end{equation}
			$\forall \;\; \widetilde{\phi}^k$, $\in H^1(\Omega)$, where $\phi^k, \phi^{k+1}, \mu^{k},\mu^{k+1}, \vec{v}^k, \vec{v}^{k+1}$ solves \cref{eqn:nav_stokes_var_semi_disc} -- \cref{eqn:phi_eqn_var_semi_disc}.
\label{prop:forcing_aside}
\end{proposition}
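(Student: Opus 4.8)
The plan is to read Proposition~\ref{prop:forcing_aside} as a pointwise differential identity and to verify it by a single application of the product rule, followed by the symmetry of mixed second partial derivatives. Throughout I abbreviate $\phi := \tphi^k$ and use the summation convention, so that a repeated index denotes a sum over the spatial dimensions.

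First I would start from the right-hand side and differentiate the contracted product. Since the two factors inside the parentheses are identical, the product rule yields
\begin{equation}
\frac{1}{2}\pd{}{x_j}\left(\pd{\phi}{x_i}\,\pd{\phi}{x_i}\right)
= \frac{1}{2}\left(\frac{\partial^2 \phi}{\partial x_j\,\partial x_i}\,\pd{\phi}{x_i} + \pd{\phi}{x_i}\,\frac{\partial^2 \phi}{\partial x_j\,\partial x_i}\right)
= \pd{\phi}{x_i}\,\frac{\partial^2 \phi}{\partial x_j\,\partial x_i},
\end{equation}
the factor $\tfrac12$ being cancelled by the two coincident terms. To match the left-hand side $\pd{\phi}{x_j}\bigl(\partial^2\phi/\partial x_j\,\partial x_i\bigr)$ I would then relabel the summed dummy index and invoke the equality of mixed partials $\partial^2\phi/\partial x_i\,\partial x_j = \partial^2\phi/\partial x_j\,\partial x_i$ (Schwarz's theorem), which closes the argument. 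The manipulation is purely algebraic, so no estimates or function-space machinery enter beyond these two ingredients.

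The only genuine obstacle is regularity: the stated space $H^1(\Omega)$ controls only first derivatives, whereas the identity involves second derivatives and their commutativity. Hence the cleanest route is to establish the identity for smooth $\phi$ (where the product rule and Schwarz's theorem apply pointwise) and to regard the general statement as holding in the distributional sense, which is all that is needed when the expression is subsequently paired against a test function and integrated by parts inside the Cahn--Hilliard forcing term of~\cref{eqn:nav_stokes_var_semi_disc}. I would flag this regularity caveat explicitly but not dwell on it, since the proposition is used only as an intermediate algebraic rewriting and its role is fully justified by the weak formulation in which it ultimately appears.
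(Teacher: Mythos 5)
Your proof is correct, and it is the standard argument: apply the product rule to the contracted gradient term and invoke equality of mixed partials. Note that the paper itself supplies no proof of \cref{prop:forcing_aside} --- it is recalled verbatim from \citep{Khanwale2020} ``for completeness'' --- so there is nothing to diverge from; your one-line computation is exactly what any such proof would contain. Two small points you handled well and are worth keeping: (i) the statement's index bookkeeping is off as written (the free index is $i$ on the left but $j$ on the right), which your dummy-index relabeling silently repairs; and (ii) the regularity caveat is real, since $\tphi^k \in H^1(\Omega)$ does not control second derivatives, and your resolution --- prove the identity for smooth functions and read the general case distributionally, which suffices because the identity is only ever used after pairing with a test function and integrating by parts in \cref{lem:forcing_equivalent} --- is the right one.
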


%\begin{proof}
%	We just need to recall a vector identity to prove this.
%	Recall the vector identities
%	\begin{align}
%	\frac{1}{2}\pd{\left(A_i A_i\right)}{x_j} &= \left(A_i\pd{}{x_i}\right)A_j + \epsilon_{ijk}A_j\left(\epsilon_{klm}\pd{A_m}{x_l}\right),\label{eqn:vec_id_cross}\\
%	\epsilon_{ijk}\pd{}{x_j}\left( \pd{f}{x_k}\right) &= 0, \label{eqn:cross_zero_scalar}
%	\end{align}
%	where $f$ is a scalar function and $\epsilon$ is the Levi-Civita symbol.
%	In our case $A_j = \pd{\tilde{\phi}^{k}}{x_j}$, which causes the cross product term in \cref{eqn:vec_id_cross} to be zero from \cref{eqn:cross_zero_scalar}, and which leads to the desired result.
%\end{proof}

\begin{remark}
	The advection term in \cref{eqn:nav_stokes} can be defined in the divergence form as (see lemma 6.10 of section 6.1.2 of \citep{Volker2016} for details):
%	\begin{equation}
%	B(v_i, v_j) \coloneqq v_j \pd{v_i}{x_j} + \frac{1}{2}v_i \pd{v_j}{x_j}.
%	\end{equation} 
%	Using the solenoidality of the the mixture velocity \cref{eqn:cont} we have that
	\begin{align}
	B_1(v_i, v_j) = \rho v_j \pd{v_i}{x_j} + \frac{1}{2}  v_i \pd{\left(\rho v_j\right)}{x_j}
	%= \rho v_j \pd{v_i}{x_j} 
	\quad \text{and} \quad 
	B_2(v_i, v_j) = J_j\pd{v_i}{x_j} + \frac{1}{2}  v_i \pd{J_j}{x_j},
	%=  J_j\pd{v_i}{x_j},
	\end{align}
%	where we also used solenoidality of the the mixture velocity \cref{eqn:cont}.
	The divergence form induces a trilinear form when weakened:
	\begin{align}
	b_1(v_i, v_j, w_i) &= \Bigl(B_1(v_i, v_j),w_i\Bigr) = \left(\rho v_j \pd{v_i}{x_j}, w_i\right) + \frac{1}{2} \left( v_i \pd{\left(\rho v_j\right)}{x_j}, w_i\right) , \\
	b_2(v_i, J_j, w_i) &= \Bigl(B_1(v_i, J_j),w_i\Bigr) = \left(J_j \pd{v_i}{x_j}, w_i\right) + \frac{1}{2}\left(v_i \pd{J_j}{x_j}, w_i\right).
	\end{align}
	Using the above proposition and using integration by parts then yields: 
	\begin{align}
	b_1(v_i, v_j, v_i) &= (B_1(v_i, v_j),v_i) = 0,\label{eqn:trilinear_zero_vel}\\
	b_2(v_i, J_j, v_i) &= (B_2(v_i, J_j),v_i) = 0\label{eqn:trilinear_zero_massflux}.
	\end{align}
	See (see lemma 6.10 of section 6.1.2 of \citep{Volker2016} for proof.  
	For the equal density case $J_j$ is zero, and $\pd{\left(\rho v_j\right)}{x_j} = \rho \pd{v_j}{x_j} = 0$
	then, 
	\begin{align}
	b_1(v_i, v_j, v_i) &= \Bigl(B_1(v_i, v_j),v_i\Bigr) = \left(\rho v_j \pd{v_i}{x_j}, v_i\right) = \rho \left(v_j \pd{v_i}{x_j}, v_i\right) + \rho \frac{1}{2} \left( v_i \pd{v_j}{x_j}, v_i\right) = 0, \\
	b_2(v_i, J_j, v_i) &= \Bigl(B_1(v_i, J_j),v_i\Bigr) = \left(J_j \pd{v_i}{x_j}, v_i\right) = \left(J_j \pd{v_i}{x_j}, v_i\right) + \frac{1}{2}\left(v_i \pd{J_j}{x_j}, v_i\right) = 0,
	\end{align} 
	by skew-symmetry which is used in~\citep{Khanwale2020}.
	\label{rem:skew-symmetry}
\end{remark}

\begin{corollary}{Strong equivalence of forcing} {}
	If we have the following equivalence in the weak sense:
	\begin{equation}
	\frac{Cn}{We}\left( \pd{}{x_j}\left({\pd{\tphi^k}{x_i}\pd{\tphi^{k}}{x_j}}\right), \delta t  \, \tvi^k\right) = \frac{\delta t}{WeCn}\left(\tphi^k \pd{\tmu^{k}}{x_i},\tvi^k\right),
	\end{equation}
	$\forall \;\; \tphi^k$, $\tmu^{k} \in  H^1(\Omega)$, and $\forall \;\; \widetilde{\vec{v}}^k \in  \vec{H}_{0}^1(\Omega)$, and $\widetilde{\vec{v}}^k$ is weakly divergence-free, where $\widetilde{\vec{v}}^k, \widetilde{\vec{v}}^{k+1}, p^k, p^{k+1}, \phi^k, \phi^{k+1}, \mu^{k},\mu^{k+1}$ satisfy \cref{eqn:nav_stokes_var_semi_disc} -- \cref{eqn:phi_eqn_var_semi_disc}, then 
	the following equivalence also holds in the strong sense:
	\begin{equation}
	\frac{Cn}{We}\pd{}{x_j}\left({\pd{\tphi^k}{x_i}\pd{\tphi^{k}}{x_j}}\right) = \frac{1}{WeCn}\tphi^k\pd{\tmu^{k}}{x_i},
	\end{equation} 
	if $\tphi^k$, $\tmu^{k} \in H^1(\Omega) \bigcap C^{\infty}_{c}(\Omega)$, and $\widetilde{\vec{v}}^k \in \vec{H}_{0}^1(\Omega)\bigcap \vec{C}^{\infty}_{c}(\Omega)$.
\end{corollary}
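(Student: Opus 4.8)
The plan is to pass from the variational identity to the pointwise one by the fundamental lemma of the calculus of variations, once the two forcing forms have been collected into a single residual field. First I would cancel the common factor $\delta t$ and introduce
\[
h_i := \frac{Cn}{We}\pd{}{x_j}\left(\pd{\tphi^k}{x_i}\pd{\tphi^k}{x_j}\right) - \frac{1}{WeCn}\,\tphi^k\pd{\tmu^k}{x_i},
\]
so that the hypothesised weak equivalence reads exactly $\left(h_i,\tvi^k\right)=0$ for every weakly divergence-free $\tvecv^k \in \vec{H}_0^1(\Omega)\cap\vec{C}^\infty_c(\Omega)$. Because $\tphi^k,\tmu^k \in H^1(\Omega)\cap C^\infty_c(\Omega)$, the field $h_i$ is smooth, so if I can show $\left(h_i,w_i\right)=0$ for \emph{all} $w_i \in \vec{C}^\infty_c(\Omega)$, not only the solenoidal ones, the du Bois--Reymond lemma gives $h_i\equiv 0$ on $\Omega$, which is precisely the claimed strong equivalence.

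Before removing the solenoidal restriction I would expose the internal structure of $h_i$, which both clarifies the argument and connects it to \cref{lem:forcing_equivalent}. Expanding the divergence and applying the capillary identity of \cref{prop:forcing_aside} rewrites the first term of $h_i$ as $\tfrac{1}{2}\pd{}{x_i}\!\left(\pd{\tphi^k}{x_j}\pd{\tphi^k}{x_j}\right)+\pd{\tphi^k}{x_i}\,\pd{}{x_j}\!\left(\pd{\tphi^k}{x_j}\right)$. Substituting the chemical-potential relation \cref{eqn:mu_eqn} for the Laplacian $\pd{}{x_j}\!\left(\pd{\tphi^k}{x_j}\right)$ and using $\psi'(\tphi^k)\,\pd{\tphi^k}{x_i}=\pd{}{x_i}\psi(\tphi^k)$ collapses every contribution of $h_i$ except the $\tphi^k\pd{\tmu^k}{x_i}$ pairing into derivatives of the explicit scalar $\pi=\tfrac{1}{We}\!\left(\tfrac{Cn}{2}\norm{\nabla\tphi^k}^2+\tfrac{1}{Cn}\psi(\tphi^k)\right)-\tfrac{1}{WeCn}\,\tmu^k\tphi^k$. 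This is the computation that identifies which part of $h_i$ is invisible to solenoidal test fields.

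The decisive step is then to upgrade orthogonality against solenoidal fields to orthogonality against all of $\vec{C}^\infty_c(\Omega)$. I would do this through a Helmholtz decomposition, writing $h_i = h_i^{\perp} + \pd{\pi}{x_i}$ with $h_i^\perp$ divergence-free; the hypothesis immediately forces $h_i^\perp = 0$, leaving $h_i = \pd{\pi}{x_i}$. To conclude I must show that the potential $\pi$ reduces to a constant, so that its gradient drops and $h_i$ vanishes identically. Here I would exploit that the weak identity is assumed for \emph{every} admissible pair $\tphi^k,\tmu^k$: varying these two scalars independently generates a family of residuals $h_i[\tphi^k,\tmu^k]$, and the compact support of the data lets me integrate by parts without boundary contributions while probing the gradient directions that solenoidal test fields cannot reach.

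I expect this final elimination of the gradient part to be the main obstacle. Orthogonality to the cone of solenoidal test fields controls only the solenoidal component of $h_i$, so the entire weight of the argument rests on demonstrating that the accumulated potential $\pi$ — assembled from $\norm{\nabla\tphi^k}^2$, $\psi(\tphi^k)$, and $\tmu^k\tphi^k$ — cannot contribute a surviving gradient in the smooth, compactly supported class. Making this reduction rigorous, rather than the routine application of the fundamental lemma that follows it, is the crux on which the pointwise equivalence depends.
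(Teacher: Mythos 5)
Your reduction is set up correctly as far as it goes: defining the residual $h_i$, invoking \cref{prop:forcing_aside} together with the chemical-potential relation to write $h_i = \pd{\pi}{x_i}$ with $\pi = \frac{Cn}{2We}\norm{\nabla\tphi^k}^2 + \frac{1}{WeCn}\psi(\tphi^k) - \frac{1}{WeCn}\tmu^k\tphi^k$, and observing that orthogonality to solenoidal fields annihilates only the Leray component of $h_i$. But the step you yourself flag as the crux is a genuine gap, and under your reading of the hypothesis it cannot be closed. For any fixed admissible pair $(\tphi^k,\tmu^k)$ linked by \cref{eqn:mu_eqn_var_semi_disc}, the identity $h_i=\pd{\pi}{x_i}$ shows that the weak hypothesis against divergence-free test fields is \emph{automatically} satisfied, while $\pi$ is generically non-constant: outside the support of $\tphi^k$ one has $\pi = \frac{1}{4WeCn}$ (since $\psi(0)=\tfrac14$), whereas inside it varies with $\norm{\nabla\tphi^k}^2$ and $\tmu^k\tphi^k$. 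Hence $h_i = \pd{\pi}{x_i}\not\equiv 0$ for generic data, so no argument that "varies $\tphi^k,\tmu^k$ independently" can eliminate the surviving gradient — each fixed pair would already have to satisfy the pointwise conclusion on its own, and it does not. Your proposed family-of-residuals step therefore fails, not merely remains unfinished.

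The statement is provable only under the other reading of its hypothesis, in which the weak equivalence is assumed for \emph{all} $\vec{w} \in \vec{H}_0^1(\Omega)$, the divergence-freeness in the statement being a property of the velocity solution (used to derive \cref{lem:forcing_equivalent}) rather than a restriction on the test space. In that case the proof is exactly the short argument of your first paragraph: by the smoothness assumptions $h_i$ is continuous, $\left(h_i, w_i\right)=0$ for every $w_i$ ranging over $\vec{C}^\infty_c(\Omega)$, and the du Bois--Reymond lemma gives $h_i \equiv 0$ pointwise; no Helmholtz decomposition and no elimination of a potential are needed. Note also that the paper states this corollary without proof (it is recalled from \citet{Khanwale2020}), so there is no in-paper argument to compare line by line; the intended proof is this direct density/fundamental-lemma argument, and your explicit computation of $\pi$ is best read as showing why the solenoidal-only hypothesis would be too weak to yield the strong form.
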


\section{Details of solver selection for the numerical experiments}
\label{sec:app_linear_solve}
For the cases presented in \cref{subsec:single_rising_drop_2D,subsec:rayleigh_taylor,subsec:ldc_3d} we use the BiCGStab linear solver (a Krylov space solver) with additive Schwarz-based preconditioning.  For better reproduction, the command line options we provide {\sc petsc} are given below which include some commands used for printing some norms as well.
\begin{lstlisting}
-ksp_type bcgs
-pc_type asm
-sub_pc_type lu 0
#For monitoring residuals
-snes_monitor 
-snes_converged_reason 
-ksp_converged_reason
\end{lstlisting}

For the Rayleigh-Taylor instability case (\cref{subsec:rayleigh_taylor_2D}), we used an algebraic multigrid (AMG) linear solver with an successive over relaxation preconditioner with a GMRES at each level as a smoother.  The options used for the \petsc setup is given below. 
\begin{lstlisting}
solver_options_ns = {
	snes_atol = 1e-5
	snes_rtol = 1e-6
	snes_stol = 1e-5
	snes_max_it = 40
	ksp_rtol = 1e-5
	ksp_atol = 1e-6
	ksp_diagonal_scale = True
	ksp_diagonal_scale_fix = True

	#multigrid

		#solver selection
		ksp_type = "fgmres"
		pc_type = "gamg"
		pc_gamg_asm_use_agg = True
		mg_levels_ksp_type = "gmres"
		mg_levels_pc_type = "sor"
	  	
	#performance options
		mattransposematmult_via = "matmatmult"
		pc_gamg_reuse_interpolation = "True"
		mg_levels_ksp_max_it = 40
};

\end{lstlisting}

The linear systems we handle are fairly ill-conditioned, therefore, the smoothers we need to use are fairly expensive.  The ASM/LU based smoother is more expensive compared to other smoothers like block Jacobi, however ASM/LU is more robust (better convergence).  This setup works very well with a relatively constant number of Krylov iterations as the number of processes are increased in the massively parallel setting. The scaling results we present use the same setup of solvers,  but there is substantial room for improvement in this area of the code where fieldsplit preconditioners using Schur complement can be used as smoothers to improve speed of the AMG solver. 

%\newpage
%\listoftodos[Notes]

\end{document}